\documentclass[10pt]{amsart}
\UseRawInputEncoding
\usepackage{amsfonts}
\usepackage{mathrsfs}
\usepackage{amscd}
\usepackage{amsmath}
\usepackage{amssymb}
\usepackage{latexsym}
\usepackage{lscape}
\usepackage{xypic}
\usepackage{comment}
\usepackage{amscd}
\usepackage{wasysym}  
\usepackage{tikz} 
\usetikzlibrary{matrix,arrows}
\usepackage{appendix}
\usepackage{geometry}
\usepackage[nice]{nicefrac}
\usepackage{dsfont}
\usepackage[utf8]{inputenc}  
\usepackage{xypic}

\usepackage[pagebackref,hyperindex,linktocpage=true]{hyperref}
\hypersetup{
    colorlinks,
    linkcolor={red!50!black},
    citecolor={blue!50!black},
    urlcolor={blue!80!black}
}

\usepackage{color}
\newcommand{\DrFibDim}[4]{\mathbf{d}_{\mathrm{Dr}}\!\left(#1,#2\,;\,#3,#4\right)}
\DeclareMathOperator{\Fr}{Fr}

\newtheorem{thm}{Theorem}[section]

\newtheorem{lem}[thm]{Lemma}

\newtheorem{lem-def}[thm]{Lemma-Definition}
\newtheorem{cor}[thm]{Corollary}

\newtheorem{fact}[thm]{Fact}

\theoremstyle{definition}

\newtheorem{ex}[thm]{Example}

\newtheorem{rmk}[thm]{Remark}
\newtheorem{dfn}[thm]{Definition}

\numberwithin{equation}{section}

\newcommand{\nc}{\newcommand}

\nc{\on}{\operatorname}
\nc{\fraka}{{\mathfrak a}} \nc{\bba}{{\mathbf a}}
\nc{\frakb}{{\mathfrak b}}
\nc{\frakc}{{\mathfrak c}}
\nc{\frakd}{{\mathfrak d}}
\nc{\frake}{{\mathfrak e}}
\nc{\frakf}{{\mathfrak f}}
\nc{\frakg}{{\mathfrak g}}
\nc{\frakh}{{\mathfrak h}}
\nc{\fraki}{{\mathfrak i}}
\nc{\frakj}{{\mathfrak j}}
\nc{\frakk}{{\mathfrak k}}
\nc{\frakl}{{\mathfrak l}}
\nc{\frakm}{{\mathfrak m}}
\nc{\frakn}{{\mathfrak n}}
\nc{\frako}{{\mathfrak o}}
\nc{\frakp}{{\mathfrak p}}
\nc{\frakq}{{\mathfrak q}}
\nc{\frakr}{{\mathfrak r}}
\nc{\fraks}{{\mathfrak s}}
\nc{\frakt}{{\mathfrak t}}
\nc{\fraku}{{\mathfrak u}}
\nc{\frakv}{{\mathfrak v}}
\nc{\frakw}{{\mathfrak w}}
\nc{\frakx}{{\mathfrak x}}
\nc{\fraky}{{\mathfrak y}}
\nc{\frakz}{{\mathfrak z}}
\nc{\frakA}{{\mathfrak A}}
\nc{\frakB}{{\mathfrak B}}
\nc{\frakC}{{\mathfrak C}}
\nc{\frakD}{{\mathfrak D}}
\nc{\frakE}{{\mathfrak E}}
\nc{\frakF}{{\mathfrak F}}
\nc{\frakG}{{\mathfrak G}}
\nc{\frakH}{{\mathfrak H}}
\nc{\frakI}{{\mathfrak I}}
\nc{\frakJ}{{\mathfrak J}}
\nc{\frakK}{{\mathfrak K}}
\nc{\frakL}{{\mathfrak L}}
\nc{\frakM}{{\mathfrak M}}
\nc{\frakN}{{\mathfrak N}}
\nc{\frakO}{{\mathfrak O}}
\nc{\frakP}{{\mathfrak P}}
\nc{\frakQ}{{\mathfrak Q}}
\nc{\frakR}{{\mathfrak R}}
\nc{\frakS}{{\mathfrak S}}
\nc{\frakT}{{\mathfrak T}}
\nc{\frakU}{{\mathfrak U}}
\nc{\frakV}{{\mathfrak V}}
\nc{\frakW}{{\mathfrak W}}
\nc{\frakX}{{\mathfrak X}}
\nc{\frakY}{{\mathfrak Y}}
\nc{\frakZ}{{\mathfrak Z}}
\nc{\bbA}{{\mathbb A}}
\nc{\bbB}{{\mathbb B}}
\nc{\bbC}{{\mathbb C}}
\nc{\bbD}{{\mathbb D}}
\nc{\bbE}{{\mathbb E}}
\nc{\bbF}{{\mathbb F}} \nc{\bbf}{{\mathbf f}}
\nc{\bbG}{{\mathbb G}}
\nc{\bbH}{{\mathbb H}}
\nc{\bbI}{{\mathbb I}}
\nc{\bbJ}{{\mathbb J}}
\nc{\bbK}{{\mathbb K}}
\nc{\bbL}{{\mathbb L}}
\nc{\bbM}{{\mathbb M}}
\nc{\bbN}{{\mathbb N}}
\nc{\bbO}{{\mathbb O}}
\nc{\bbP}{{\mathbb P}}
\nc{\bbQ}{{\mathbb Q}}
\nc{\bbR}{{\mathbb R}}
\nc{\bbS}{{\mathbb S}}
\nc{\bbT}{{\mathbb T}}
\nc{\bbU}{{\mathbb U}}
\nc{\bbV}{{\mathbb V}}
\nc{\bbW}{{\mathbb W}}
\nc{\bbX}{{\mathbb X}}
\nc{\bbY}{{\mathbb Y}}
\nc{\bbZ}{{\mathbb Z}}
\nc{\calA}{{\mathcal A}}
\nc{\calB}{{\mathcal B}}
\nc{\calC}{{\mathcal C}}
\nc{\calD}{{\mathcal D}}
\nc{\calE}{{\mathcal E}}
\nc{\calF}{{\mathcal F}}
\nc{\calG}{{\mathcal G}}
\nc{\calH}{{\mathcal H}}
\nc{\calI}{{\mathcal I}}
\nc{\calJ}{{\mathcal J}}
\nc{\calK}{{\mathcal K}}
\nc{\calL}{{\mathcal L}}
\nc{\calM}{{\mathcal M}}
\nc{\calN}{{\mathcal N}}
\nc{\calO}{{\mathcal O}}
\nc{\calP}{{\mathcal P}}
\nc{\calQ}{{\mathcal Q}}
\nc{\calR}{{\mathcal R}}
\nc{\calS}{{\mathcal S}}
\nc{\calT}{{\mathcal T}}
\nc{\calU}{{\mathcal U}}
\nc{\calV}{{\mathcal V}}
\nc{\calW}{{\mathcal W}}
\nc{\calX}{{\mathcal X}}
\nc{\calY}{{\mathcal Y}}
\nc{\calZ}{{\mathcal Z}}

\nc{\scrA}{{\mathscr A}}
\nc{\scrB}{{\mathscr B}}
\nc{\scrR}{{\mathscr R}}

\nc{\Bmu}{\mbox{$\raisebox{-0.59ex}{$l$}\hspace{-0.18em}\mu\hspace{-0.88em}\raisebox{-0.98ex}{\scalebox{2}{$\color{white}.$}}\hspace{-0.416em}\raisebox{+0.88ex}{$\color{white}.$}\hspace{0.46em}$}{}}

\nc{\bnu}{{\bar{ \nu}}}

\nc{\olO}{\bar{\calO}}

\nc{\al}{{\alpha}} 
\nc{\be}{{\beta}}
\nc{\ga}{{\gamma}} \nc{\Ga}{{\Gamma}}
 \nc{\hGa}{\hat{\Gamma}}
\nc{\ve}{{\varepsilon}} 
\nc{\la}{{\lambda}} \nc{\La}{{\Lambda}}
\nc{\om}{\omega} \nc{\Om}{\Omega} 
\nc{\sig}{{\sigma}} \nc{\Sig}{{\Sigma}}

\nc{\tnb}{\psi_{\rm tame}}
\nc{\oM}{\overline{{M}}}
\nc{\op}{{\on{op}}}
\nc{\ad}{{\on{ad}}}
\nc{\alg}{{\on{alg}}}
\nc{\Ad}{{\on{Ad}}}
\nc{\Adm}{{\on{Adm}}} \nc{\aff}{{\on{aff}}}
\nc{\Aut}{{\on{Aut}}}
\nc{\Bun}{{\on{Bun}}}
\nc{\cha}{{\on{char}}}
\nc{\der}{{\on{der}}}
\nc{\Der}{{\on{Der}}}
\nc{\diag}{{\on{diag}}}
\nc{\End}{{\on{End}}}
\nc{\Fl}{{\calF\!\ell}}
\nc{\Tr}{{\on{Transp}}}
\nc{\TR}{{\calT\!\calR}}
\nc{\Gal}{{\on{Gal}}}
\nc{\Gr}{{\on{Gr}}}
\nc{\rH}{{\on{H}}}
\nc{\Hom}{{\on{Hom}}}
\nc{\IC}{{\on{IC}}}
\nc{\id}{{\on{id}}}
\nc{\Id}{{\on{Id}}}
\nc{\ind}{{\on{ind}}}
\nc{\Ind}{{\on{Ind}}}
\nc{\Lie}{{\on{Lie}}}
\nc{\Pic}{{\on{Pic}}}
\nc{\pr}{{\on{pr}}}
\nc{\Res}{{\on{Res}}}
\nc{\res}{{\on{res}}} \nc{\Sat}{{\on{Sat}}}
\nc{\s}{{\on{sc}}}
\nc{\drv}{{\on{der}}}
\nc{\sgn}{{\on{sgn}}}
\nc{\Spec}{{\on{Spec}}}\nc{\Spf}{\on{Spf}} 
\nc{\Sph}{\on{Sph}}
\nc{\St}{{\on{St}}}
\nc{\tr}{{\on{tr}}}
\nc{\Mod}{{\mathrm{-Mod}}}
\nc{\Hilb}{{\on{Hilb}}} 
\nc{\Ext}{{\on{Ext}}} 
\nc{\vs}{{\on{Vec}}}
\nc{\ev}{{\on{ev}}}
\nc{\nO}{{\breve{\calO}}}
\nc{\tS}{{\tilde{S}}}
\nc{\spe}{{\on{sp}}}
\nc{\loc}{{\on{loc}}}

\nc{\nscrR}{{\mathscr{R}^{\on{nr}}}}

\nc{\GL}{{\on{GL}}}
\nc{\U}{{\on{U}}}
\nc{\Gl}{\on{Gl}} 
\nc{\GSp}{{\on{GSp}}}
\nc{\gl}{{\frakg\frakl}}
\nc{\SL}{{\on{SL}}} 
\nc{\SU}{{\on{SU}}} 
\nc{\SO}{{\on{SO}}}
\nc{\PGL}{{\on{PGL}}}

\nc{\Conv}{{\on{Conv}}}
\nc{\Rep}{{\on{Rep}}}
\nc{\Dom}{{\on{Dom}}}
\nc{\red}{{\on{red}}}
\nc{\act}{{\on{act}}}
\nc{\nr}{{\on{nr}}}
\nc{\ctf}{{\on{ctf}}}

\nc{\str}{{\on{-}}} 
\nc{\os}{{\bar{s}}}
\nc{\oeta}{{\bar{\eta}}}

\nc{\hookto}{\hookrightarrow}
\nc{\longto}{\longrightarrow}
\nc{\leftto}{\leftarrow}
\nc{\onto}{\twoheadrightarrow}
\nc{\lonto}{\twoheadleftarrow}

\nc{\uG}{{\underline{G}}}
\nc{\uA}{{\underline{A}}}
\nc{\uS}{{\underline{S}}}
\nc{\uT}{{\underline{T}}}
\nc{\uM}{{\underline{M}}}
\nc{\uP}{{\underline{P}}}
\nc{\uB}{{\underline{B}}}
\nc{\uN}{{\underline{N}}}

\nc{\ucG}{{\underline{\calG}}}
\nc{\ucA}{{\underline{\calA}}}
\nc{\ucS}{{\underline{\calS}}}
\nc{\ucT}{{\underline{\calT}}}
\nc{\ucM}{{\underline{\calM}}}
\nc{\ucP}{{\underline{\calP}}}
\nc{\ucN}{{\underline{\calN}}}

\nc{\bF}{{\breve{F}}}

\nc{\oFl}{{\overline{\Fl}}} 
\nc{\bU}{{\overline{U}}}
\nc{\tGr}{{\tilde{\Gr}}}
\nc{\cGr}{\calG\! r}
\nc{\oGr}{\overline{\on{Gr}}} 
\nc{\ocGr}{\overline{\calG\! r}}
\nc{\co}{{\colon}}
\nc{\sch}[1]{(Sch/{#1})}
\nc{\HypLoc}[1]{HypLoc({#1})}

\nc{\ohtimes}{\stackrel{!}{\otimes}}
\nc{\boxtilde}{\widetilde{\boxtimes}}
\nc{\vstar}{{\varhexstar}}

\nc{\Div}{\on{Div}}

\nc{\bslash}{\backslash}
\nc{\algQl}{{\bar{\bbQ}_\ell}}
\nc{\sF}{{\bar{F}}}
\nc{\nF}{{\breve{F}}}
\nc{\nW}{{W^{\on{nr}}}}
\nc{\sk}{{\bar{k}}}
\nc{\cont}{\on{c}}
\nc{\Supp}{\on{Supp}}
\nc{\blt}{\bullet}  
\nc{\dom}{\on{dom}}
\nc{\scon}{{\on{sc}}} 
\nc{\Affine}{\on{Aff}} 
\nc{\nscrA}{\mathscr{A}^{\on{nr}}} 
\nc{\nfraka}{{\bbf^{\on{nr}}}}
\nc{\ran}{{\rangle}}
\nc{\lan}{{\langle}}
\nc{\bk}{{\bar{k}}}
\nc{\tF}{{\tilde{F}}}
\nc{\sS}{{\bar{S}}}
\nc{\LG}{{^\text{L}\hspace{-0.04cm}G}}
\nc{\LL}{{^\text{L}\hspace{-0.07cm}L}}

\nc{\pot}[1]{ [\hspace{-0,5mm}[ {#1} ]\hspace{-0,5mm}] }
\nc{\rpot}[1]{ (\hspace{-0,7mm}( {#1} )\hspace{-0,7mm}) }

\nc{\defined}{\hspace{0.1cm}\stackrel{\text{\tiny \rm def}}{=}\hspace{0.1cm}}

\begin{document}

\title[Fibers of the Hodge-Newton Map for \(\GL_n\)]%
{On the geometry of certain non-basic affine Deligne-Lusztig varieties}
\author[Zhiming Li]{Zhiming Li}

\maketitle

\begin{abstract} 

Let \(F\) be a non-Archimedean local field, let \(L=\breve F\), and let
\(G=\GL_n\). Let \(M\subset G\) be a standard Levi subgroup and let
\(b\in M(L)\) be basic in \(M\), but not necessarily basic in \(G\). For a
dominant cocharacter \(\mu\), we study the reduction-to-Levi morphism
\[
  \beta:X^G_\mu(b)\longrightarrow
  \bigsqcup_{\mu_M\in S_M(\mu,\nu_b)}X^M_{\mu_M}(b)
\]
for affine Deligne--Lusztig varieties in the affine Grassmannian.

Using an Iwasawa factorization relative to \(P=MN\), we reduce the fiber
condition to explicit Frobenius-twisted lattice equations in the off-block
coordinates. In the Drinfeld case, where the base \(X^M_{\mu_M}(b)\) is
zero-dimensional, we prove that \(\beta\) is globally trivial with constant
affine-space fiber in the non-basic cases under consideration. More generally,
in the minuscule case we develop a nonzero-slope lattice-theoretic criterion
which shows that the fibers are affine spaces and that \(\beta\) is Zariski
locally a trivial affine-space bundle under natural slope-compatibility
hypotheses. We also give examples in the non-minuscule setting where the fibers
need not be affine spaces.

\end{abstract}

\tableofcontents

\thispagestyle{empty}

\section{Introduction}

Let \(F\) be a non-Archimedean local field, with ring of integers
\(\mathcal O_F\), fixed uniformizer \(t\), and residue field
\(\kappa=\mathcal O_F/(t)\). Let
\[
  L=\breve F
\]
be the completion of the maximal unramified extension of \(F\), with ring of
integers \(\mathcal O_L\), and let
\[
  \sigma=\sigma_{L/F}
\]
denote the Frobenius automorphism.

Throughout this paper we work with
\[
  G=\GL_n
\]
over \(F\). We fix the standard split maximal torus \(T\subset G\), the standard
Borel subgroup \(B\subset G\) of upper triangular matrices, and write
\(K=G(\mathcal O_L)\). For \(b\in G(L)\) and a dominant cocharacter
\(\mu\in X_\bullet(T)\), the affine Deligne--Lusztig variety in the affine
Grassmannian is
\[
  X^G_\mu(b)(\bar\kappa)
  =
  \left\{
    x\in G(L)/K:
    x^{-1}b\sigma(x)\in Kt^\mu K
  \right\}.
\]
We view \(X^G_\mu(b)\) as a perfect \(\bar\kappa\)-scheme locally of finite type.

Affine Deligne--Lusztig varieties were introduced as affine analogues of
classical Deligne--Lusztig varieties \cite{DL76}. Their non-emptiness and
dimension are closely related to the Newton point of \(b\), the Kottwitz
classification of \(\sigma\)-conjugacy classes \cite{Kot97}, and the dimension
formula studied by Görtz--Haines--Kottwitz--Reuman and Viehmann
\cite{GHKR06,Vie06}. In the basic case, the geometry of these varieties is
often closely related to the geometry of Rapoport--Zink spaces and Shimura
varieties; for example, Fox and Imai study irreducible components in the
\(GU(2,n-2)\) case \cite{FI22}. The aim of this paper is instead to study a
class of non-basic affine Deligne--Lusztig varieties for \(G=\GL_n\), by
reducing their geometry to the basic affine Deligne--Lusztig varieties inside
suitable Levi subgroups.

Let \(M\subset G\) be a standard Levi subgroup, and suppose that
\(b\in M(L)\) is basic in \(M\). There is a natural reduction-to-Levi morphism
\[
  \beta:
  X^G_\mu(b)
  \longrightarrow
  \bigsqcup_{\mu_M\in S_M(\mu,\nu_b)}
  X^M_{\mu_M}(b),
\]
as in \cite[Proposition~5.6.1]{GHKR06}. 

Especially, for our $G = \GL_n$ case, if \(\mu_M\) is the \(M\)-dominant minuscule cocharacter determined by \(b\), and set
$\mu=(\mu_M)_{\mathrm{dom}}$, then one can show that $S_M(\mu,\nu_b) = \{\mu_M\}$, which is the ``minuscule" situation that we mainly deal with throughout the paper. 

The philosophy is that the affine Deligne--Lusztig varieties on the right are
better understood because \(b\) is basic in \(M\). Thus the geometry of
\(X^G_\mu(b)\), where \(b\) may be non-basic in \(G\), can be approached by
studying the fibers of \(\beta\).

For \(G=\GL_n\), we exploit the Iwasawa decomposition relative to a parabolic
\(P=MN\). In the two-block case
\[
  M=\GL_{n_1}\times \GL_{n_2},
\]
a point in a fiber of \(\beta\) can be written using an off-block coordinate
\(X\in L^{n_1\times n_2}\). If the base point is represented by
\((A,B)\in \GL_{n_1}(L)\times \GL_{n_2}(L)\), then the fiber condition becomes
a lattice equation of the form
\[
  M_1\sigma(X)-XM_2\in \Lambda,
\]
where
\[
  M_1=A^{-1}b_1\sigma(A),
  \qquad
  M_2=B^{-1}b_2\sigma(B),
\]
and where \(\Lambda\) is an explicit lattice determined by the Cartan condition
\(g^{-1}b\sigma(g)\in Kt^\mu K\). Thus the study of \(\beta\) is reduced to
Frobenius-twisted linear algebra over \(L\).

We now state the main results of the paper. The first concerns the Drinfeld
case, by which we mean the case where the basic affine Deligne--Lusztig variety
on the Levi side is zero-dimensional.

\begin{thm}[Drinfeld case]\label{thm:intro-drinfeld}
Let
\[
  G=\GL_{n_1+\cdots+n_r},
  \qquad
  M=\GL_{n_1}\times\cdots\times\GL_{n_r}
\]
be a standard Levi subgroup, and let \(b\in M(L)\) be basic in \(M\), and that M is maximal with this
property. Let \(\mu_M\) be the \(M\)-dominant minuscule cocharacter determined by \(b\),
and set
\[
\mu=(\mu_M)_{\mathrm{dom}},
\] and that
\[
  X^M_{\mu_M}(b)
\]
is zero-dimensional. In the non-basic Drinfeld cases treated in Sections
\(3\)--\(5\), the reduction-to-Levi morphism
\[
  \beta:X^G_\mu(b)\longrightarrow X^M_{\mu_M}(b)
\]
is globally trivial, and its geometric fibers are affine spaces of explicitly
computed dimension.
\end{thm}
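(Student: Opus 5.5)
The plan is to make the Iwasawa description of the fibers explicit and then to solve the resulting Frobenius-twisted lattice equations block by block. Let \(P=MN\) be the standard parabolic with Levi \(M\). By the Iwasawa decomposition \(G(L)=P(L)K\), every point \(x\in X^G_\mu(b)\) can be written as \(x=nmK\) with \(m\in M(L)\) and \(n\in N(L)\). The morphism \(\beta\) sends \(x\) to \(mK_M\). Since \(S_M(\mu,\nu_b)=\{\mu_M\}\) in the minuscule situation, the target is the single variety \(X^M_{\mu_M}(b)\).

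Because \(X^M_{\mu_M}(b)\) is zero-dimensional, it is a discrete set of points. Write \(b=\mathrm{diag}(b_1,\dots,b_r)\) with each \(b_i\) basic of slope \(\lambda_i\); maximality of \(M\) forces the \(\lambda_i\) to be pairwise distinct. The group \(J_b(F)=\prod_i J_{b_i}(F)\) acts transitively on \(X^M_{\mu_M}(b)\) in the Drinfeld case. This action commutes with \(\beta\) and lifts to \(X^G_\mu(b)\), because \(J_b^M\subset J_b^G\). Hence every fiber is isomorphic to the fiber over a fixed base point \(m_0\), for instance \(m_0=1\) after normalizing \(b_i\) to a standard basic representative with \(b_i\in K_M t^{\mu_{M,i}}K_M\). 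So global triviality reduces to computing this single fiber and checking that the identification \(J_b(F)/\mathrm{Stab}\times \beta^{-1}(m_0)\cong X^G_\mu(b)\) is an isomorphism of perfect schemes. Since the base is discrete, the latter amounts to the statement that \(X^G_\mu(b)\) is the disjoint union of its fibers, each open and closed. That holds because \(\beta\) is locally constant on connected components.

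Next I would write the fiber condition over \(m_0\). For \(g=n\,m_0\), the element \(g^{-1}b\sigma(g)\) is block upper triangular with diagonal blocks \(M_i=A_i^{-1}b_i\sigma(A_i)\). Its off-diagonal blocks are \(M_i\sigma(X_{ij})-X_{ij}M_j\) plus polynomial corrections in the \(X_{kl}\) with \(i<k<l<j\). One standard route is to first conjugate \(n\) so that each off-block entry corrects only the lower-order terms; the other is to induct on the distance \(j-i\) along the block filtration. The Cartan condition \(g^{-1}b\sigma(g)\in Kt^\mu K\) with \(\mu=(\mu_M)_{\mathrm{dom}}\) minuscule is equivalent to integrality of the block-upper-triangular matrix \(t^{-\text{(min)}}(\cdot)\) together with a rank condition mod \(t\). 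Since the diagonal blocks already realize \(\mu_M\), I expect this to force each off-diagonal block into a fixed lattice \(\Lambda_{ij}\), giving the system \(M_i\sigma(X_{ij})-X_{ij}M_j\in\Lambda_{ij}+(\text{lower terms})\), taken modulo the stabilizer action \(X_{ij}\mapsto X_{ij}+\mathcal O\)-lattice.

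The key computational step, and the main obstacle, is solving one equation \(M_1\sigma(X)-XM_2\in\Lambda\) modulo the integral translates, where \(M_1,M_2\) are basic of distinct slopes \(\lambda_1\neq\lambda_2\). My approach is to pass to the isocrystal \(\mathrm{Hom}(N_2,N_1)\), which is isoclinic of slope \(\lambda_1-\lambda_2\neq 0\). For nonzero slope the operator \(X\mapsto M_1\sigma(X)-XM_2\) is bijective on \(L^{n_1\times n_2}\). The operator is contracting in one direction and expanding in the other, so a Newton/Hensel-type iteration inverts it. The solution set of the lattice condition modulo \(\mathcal O\)-translates is then \(\Phi^{-1}(\Lambda)/\mathcal O^{n_1\times n_2}\), or its dual variant. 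For explicit standard representatives \(b_i=\) companion matrices of \(t^{a_i}\), this quotient is a finite-dimensional perfect affine space. Its dimension can be computed by counting lattice indices via \(\sum_{\text{entries}}\) of valuations, which I expect to recover \(\langle\rho,\mu-\nu_b\rangle-\dim X^M_{\mu_M}(b)=\langle\rho,\mu-\nu_b\rangle\).

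Finally, an induction over \(j-i\) handles the nonlinear lower terms. At each stage, the correction is a polynomial in previously chosen coordinates, so it merely translates the affine-space solution set. This yields an iterated affine-space fibration with polynomial transition maps, which is again an affine space (a triangular change of coordinates). The subtle points are verifying the rank condition mod \(t\) in the minuscule case and showing that the iteration converges uniformly in families. That uniformity is what upgrades fiberwise bijectivity to an isomorphism of perfect schemes.
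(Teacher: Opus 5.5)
Your route differs from the paper's. In Sections 3--5 the paper proceeds as follows:
\begin{itemize}
\item it fixes the representatives $\tau_k^{d}$ and turns the Cartan condition into entrywise congruences via the block-monomial criterion;
\item it solves these by tracing $\sigma$-, $t\sigma$- and $t^{-1}\sigma$-loops on $\mathbb Z/n\times\mathbb Z/m$ (the cyclic-tree lemmas), with separate canonical, dual, mixed and layered cases;
\item for three or more blocks, it chooses representatives that make the cross terms $E_{i\ell}H_{\ell j}$ vanish;
\item it gets uniformity in the base point by normalizing $M_i=b_i$, which is the same as your $J_b(F)$-translation.
\end{itemize}
Your core step is the bijectivity of $f=\Phi-\mathrm{id}$ on the Hom-isocrystal of slope $\lambda_1-\lambda_2\neq 0$, together with $f^{-1}(\Lambda')/\mathcal O_L^{n_1\times n_2}\cong \Lambda'/f(\mathcal O_L^{n_1\times n_2})$, where $\Lambda'=\Lambda M_2^{-1}$. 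This is the machinery the paper only develops in Section 7 (Theorem~\ref{thm:intro-affine-quotient} and its $N$-block consequence); over a discrete base, local triviality there is global triviality. Your approach buys uniformity (no case split on $p_i\in\{1,n_i-1\}$ or $q_1\neq q_2$) and a triangular induction instead of the delicate cross-term killing. The paper's approach buys explicit coordinates such as $c_{1,m},\dots,c_{n-1,m}$ and a linear structure on the fibers. Yours only gives perfect affine spaces, since $f$ is merely $F$-linear and $f(\mathcal O_L^{n_1\times n_2})$ is a priori only an $\mathcal O_F$-module.

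The main error is the dimension you predict. Because the base is discrete, the fiber dimension is
$$\dim X^G_\mu(b)=\langle\rho,\mu-\nu_b\rangle-\tfrac12\mathrm{def}_G(b).$$
Since $J^G_b=J^M_b$ for maximal $M$ and $\dim X^M_{\mu_M}(b)=0$, this also equals $\langle\rho,\mu-\nu_b\rangle-\langle\rho_M,\mu_M\rangle$. It is not $\langle\rho,\mu-\nu_b\rangle$: for $b=\mathrm{diag}(\tau_2,\tau_3)$ and $\mu=(1,1,0,0,0)$ that quantity is $5/2$, while the fiber is $\mathbb A^1$. In your lattice count, the correct expression (with relative indices) is
$$\dim\Lambda'/f(\Lambda_0)=[\Lambda':\Lambda_0]-[f(\Lambda_0):\Lambda_0],\qquad \Lambda_0=\mathcal O_L^{n_1\times n_2}.$$
The second index is $(\lambda_2-\lambda_1)n_1n_2$ when $\lambda_1<\lambda_2$, and $0$ otherwise. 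For $\mathrm{diag}(\tau_n,\tau_m)$ one has $[\Lambda':\Lambda_0]=n-1$, which is the answer only when $n<m$; when $n>m$, the correction $n-m$ brings it to $m-1$.

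Two smaller repairs.

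First, $\mu=(\mu_M)_{\mathrm{dom}}$ need not be $G$-minuscule (e.g.\ the layered case $b=\mathrm{diag}(\tau_n,t\tau_m)$ with $n\ge 2$). So ``integrality plus a rank condition mod $t$'' is the wrong description. What you actually need is $N(L)t^{\mu_M}K\cap Kt^{\mu}K=N(\mathcal O_L)t^{\mu_M}K$.
\begin{itemize}
\item For two blocks this gives the lattice condition $v(h_{ij})\ge\min\{\alpha_i,\beta_j\}$.
\item For three or more blocks it is \emph{not} a blockwise lattice condition on $g^{-1}b\sigma(g)$: already $\bigl(\begin{smallmatrix} t&1&0\\ 0&1&1\\ 0&0&t^2\end{smallmatrix}\bigr)\notin Kt^{(2,1,0)}K$ although every entry satisfies the blockwise bounds. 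What it does say is that each layer $j-i=k$ ranges over a translate of a fixed lattice by a polynomial in the lower layers. That is what your ``lower terms'' must encode.
\end{itemize}

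Second, the map $J_b(F)/\mathrm{Stab}\times\beta^{-1}(m_0)\to X^G_\mu(b)$ is not well defined, because the stabilizer acts on $\beta^{-1}(m_0)$. Use a set of coset representatives instead; this suffices since the base is discrete.
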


More concretely, in the two-block case \(M=\GL_n\times \GL_m\), the fiber
dimension is computed by tracing the cyclic Frobenius congruences among the
entries of the off-block matrix. For the canonical Drinfeld configuration
\(b=\operatorname{diag}(\tau_n,\tau_m)\), one obtains
\[
  \dim \beta^{-1}(x)=\min\{n,m\}-1
\]
for every geometric point \(x\in X^M_{\mu_M}(b)\), where the fibers are affine spaces except that in the equal-size
auxiliary case \(n=m\) additional discrete factors appear. The higher-block
case is obtained by iterating the two-block analysis.

The second main result treats the non-Drinfeld minuscule case. The key input is
a general nonzero-slope criterion for Frobenius-twisted lattice quotients.

\begin{thm}[Nonzero-slope affine quotient]\label{thm:intro-affine-quotient}
Let \((V,\Phi)\) be a simple \(F\)-isocrystal of nonzero slope, and set
\[
  f=\Phi-\mathrm{id}.
\]
Let \(\Lambda_0\subset V\) be a lattice, let \(\Lambda\subset V\) be an
\(\mathcal O_L\)-lattice, and let \(e\in\mathbb Z\) be such that
\[
  f(t^e\Lambda_0)\subset \Lambda.
\]
Then the quotient
\[
  f^{-1}(\Lambda)/t^e\Lambda_0
\]
is an affine space over \(\bar\kappa\), as a reduced variety. If the slope of
\(\Phi\) is positive, then \(f\) induces an isomorphism of
\(\bar\kappa\)-schemes
\[
  f^{-1}(\Lambda)/t^e\Lambda_0
  \xrightarrow{\sim}
  \Lambda/f(t^e\Lambda_0).
\]
\end{thm}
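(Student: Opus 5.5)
The plan is to reduce both assertions to one structural fact: over the algebraically closed field \(\bar\kappa\), a connected commutative unipotent (perfect) group variety is isomorphic, as a reduced variety, to an affine space. Since \(f\) is additive, both quotients in the statement are commutative groups, and \(t^e\Lambda_0\), \(f(t^e\Lambda_0)\) are subgroups. It therefore suffices to show three things: (i) the quotient is finite-dimensional, i.e.\ a quotient of some \(\Lambda'/t^N\Lambda'\) by a closed subgroup; (ii) it is connected; (iii) it is unipotent. Point (iii) is automatic, since the quotient is an extension of successive subquotients of \(t^i\Lambda'/t^{i+1}\Lambda'\cong\mathbb G_a^{\oplus}\).

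First I establish that \(f\) is bijective on \(V\) with an explicit inverse. Let the slope be \(d/h\ne 0\) and choose a lattice \(\Lambda_1\) with \(\Phi^h\Lambda_1=t^d\Lambda_1\) (e.g.\ from the standard basis \(\Phi e_i=e_{i+1}\), \(\Phi e_h=t^de_1\) of a simple isocrystal). In the positive slope case \(\Phi\) is topologically nilpotent, so \(-\sum_{k\ge0}\Phi^k\) converges and inverts \(f\). In the negative slope case \(\Phi^{-1}\) is topologically nilpotent, and \(g=\sum_{k\ge0}\Phi^{-k-1}\) satisfies \(f\circ g=\mathrm{id}\). These series are given on coordinates by convergent additive \(\sigma^{\pm1}\)-polynomial maps, so they define morphisms of perfect ind-schemes. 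This is where perfectness is used, since \(\sigma^{-1}\) appears.

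In the positive slope case the series shows that \(f\) and \(f^{-1}\) send \(t^a\Lambda_1\) into \(t^{a-c}\Lambda_1\) for a fixed constant \(c\). Hence \(f\) maps \(f^{-1}(\Lambda)\) isomorphically onto \(\Lambda\) and \(t^e\Lambda_0\) onto \(f(t^e\Lambda_0)\), which gives the claimed isomorphism of \(\bar\kappa\)-schemes. The same estimates give \(t^N\Lambda\subset f(t^e\Lambda_0)\) for \(N\gg0\). So \(\Lambda/f(t^e\Lambda_0)\) is the quotient of the finite-dimensional vector group \(\Lambda/t^N\Lambda\) by the closed image of \(t^e\Lambda_0\). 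That image is closed because it is the image of a morphism of unipotent groups, and such images are closed. The quotient is connected as an image of the connected \(\Lambda\), hence it is an affine space.

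In the negative slope case, \(f^{-1}(\Lambda)=g(\Lambda)+\ker f\), and \(\ker f=0\) since nonzero slope forces \(V^{\Phi=1}=0\). So \(f^{-1}(\Lambda)=g(\Lambda)\) is the image of a connected ind-scheme, and it is bounded by the estimate on \(g\). The hypothesis \(f(t^e\Lambda_0)\subset\Lambda\) gives \(t^e\Lambda_0\subset f^{-1}(\Lambda)\). Boundedness gives \(f^{-1}(\Lambda)\subset t^{-M}\Lambda_0\), so the quotient lives inside \(t^{-M}\Lambda_0/t^e\Lambda_0\). Then \(f^{-1}(\Lambda)/t^e\Lambda_0\) is a closed connected subgroup of a vector group, and with its reduced structure it is an affine space; this is why the statement only claims reducedness here. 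I expect the main obstacle to be making the "closed subgroup" and "reduced quotient" claims rigorous in the perfect ind-scheme setting: closedness of \(g(\Lambda)\) modulo \(t^e\Lambda_0\), and the fact that the quotient scheme exists and agrees with the naive one. I would handle this by truncating all series modulo \(t^N\), where they become finite \(\sigma^{\pm1}\)-polynomial maps between finite-dimensional vector groups, and then invoking the structure theorem for connected unipotent groups.
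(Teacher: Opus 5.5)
Your proposal is correct. It shares the paper's overall skeleton: an explicit inverse series for \(f\), truncation to finite level, and then the structure theorem for smooth connected unipotent groups over \(\bar\kappa\). The mechanism in the middle is different, however.

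\emph{The paper's route.} The paper fixes the adapted lattice \(\Lambda_0=\mathcal O_L^r\) and computes \(f(\Lambda_0)\) explicitly: it is \(\Lambda_0\) for \(s>0\) and \(t^s\mathcal O_L\oplus\mathcal O_L^{r-1}\) for \(s<0\). Consequently \(W=\Lambda/f(t^e\Lambda_0)\) is a linear subspace of \(t^{-e'}f(\Lambda_0)/t^ef(\Lambda_0)\). It then pulls \(W\) back along the truncation \(\overline f\), which involves only \(\sigma\) and is bijective, hence a finite radicial isogeny. Connectedness of the preimage comes from \(\overline f^{-1}(W)\to W\) being a universal homeomorphism, so \(\sigma^{-1}\) never has to be treated as a morphism.

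\emph{Your route and what it buys.} You instead get connectedness from \(f^{-1}(\Lambda)=g(\Lambda)\), with \(g\) a morphism of perfections, and you never need \(f(t^e\Lambda_0)\) to be \(\mathcal O_L\)-stable. That is a genuine gain in generality, because for a non-adapted lattice \(f(\Lambda_0)\) need not be an \(\mathcal O_L\)-module. For example, in equal characteristic take \(\Phi e_1=e_2\), \(\Phi e_2=te_1\) and \(\Lambda_0=\mathcal O_Le_1\oplus t^{-2}\mathcal O_Le_2\). Then \(\mathcal O_L^2\subset f(\Lambda_0)\) and \(f(\Lambda_0)/\mathcal O_L^2=\{(t^{-1}\sigma(a),\,-t^{-2}a+t^{-1}c):a,c\in\bar\kappa\}\), which is not stable under scalars from \(\bar\kappa\). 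So your argument covers the arbitrary lattice \(\Lambda_0\) in the statement as posed, whereas the paper's proof only treats the adapted one.

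Working with unipotent groups rather than vector groups also handles mixed characteristic. For that reason, your phrase ``subgroup of a vector group'' in the negative-slope case should read ``subgroup of the unipotent group \(t^{-M}\Lambda_0/t^e\Lambda_0\)''.

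The step you flag as the main obstacle is easier than you expect. Since \(f(t^e\Lambda_0)\subset\Lambda\), the map \(x\mapsto f(x)\bmod\Lambda\) is an honest homomorphism \(t^{-M}\Lambda_0/t^e\Lambda_0\to t^{-M'}\Lambda/\Lambda\), given by \(\sigma\)-polynomials. The set \(f^{-1}(\Lambda)/t^e\Lambda_0\) is exactly its kernel, hence closed, with no ind-scheme considerations. Perfection, or alternatively the paper's universal-homeomorphism argument, is needed only to show this kernel is connected, which is precisely where the nonzero slope enters.
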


Applying this theorem to the Hom-isocrystals which occur in the off-block
coordinates gives the general minuscule statement.

\begin{thm}[General minuscule case]\label{thm:intro-minuscule}
Let
\[
  G=\GL_{k_1+\cdots+k_N},
  \qquad
  M=\GL_{k_1}\times\cdots\times\GL_{k_N}
\]
be a standard Levi subgroup. Let \(b=\operatorname{diag}(b_1,\dots,b_N)\in M(L)\)
be basic in \(M\), and assume that \(M\) is maximal with this property. Let \(\mu_M\) be the \(M\)-dominant minuscule cocharacter determined by \(b\), and set
\[
  \mu=(\mu_M)_{\mathrm{dom}}.
\]
Then the
reduction-to-Levi morphism
\[
  \beta:X^G_\mu(b)\longrightarrow X^M_{\mu_M}(b)
\]
is Zariski locally a trivial affine-space bundle.
\end{thm}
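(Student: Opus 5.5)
We work with the Iwasawa decomposition relative to the standard parabolic \(P=MN\), where \(N\) is the unipotent radical of the upper block-triangular parabolic. Every \(g\in G(L)/K\) is written as \(g=n\,m\,K\) with \(m=\operatorname{diag}(m_1,\dots,m_N)\in M(L)\) and \(n\in N(L)\). By \cite[Proposition~5.6.1]{GHKR06}, \(\beta(g)=mK_M\). Since \(S_M(\mu,\nu_b)=\{\mu_M\}\) by the remark in the introduction, the image of \(\beta\) lies in the single component \(X^M_{\mu_M}(b)\).

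Fix a base point \(m\) and put \(M_i=m_i^{-1}b_i\sigma(m_i)\in K_{k_i}t^{\mu_i}K_{k_i}\). Write \(n=m\,u\,m^{-1}\) with \(u\in N(L)\) having off-block entries \(X_{ij}\in L^{k_i\times k_j}\) for \(i<j\). A direct computation then gives
\[
g^{-1}b\sigma(g)=m^{-1}\cdot u^{-1}\,\mathrm{diag}(M_i)\,\sigma(u)\cdot m .
\]
So, after translating by \(m\), the fiber is the set of \(u\in N(L)/N(\mathcal O_L)\) with \(u^{-1}\operatorname{diag}(M_i)\sigma(u)\in Kt^\mu K\). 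Because \(\mu\) is minuscule, the Cartan condition can be read off from lattices. It says that \(u^{-1}D\sigma(u)\), with \(D=\operatorname{diag}(M_i)\), satisfies \(t^{a}\mathcal O\subset\) (column lattice) \(\subset t^{a-1}\mathcal O\) with the correct determinant. The block-diagonal part already satisfies this. We then filter \(N\) by the distance \(j-i\) from the diagonal and solve one superdiagonal layer at a time. At layer \(d\), the condition on each block becomes an equation
\[
M_i\sigma(X_{ij})-X_{ij}M_j\in \Lambda_{ij}+(\text{polynomial in lower-layer }X\text{'s}),
\]
where \(\Lambda_{ij}\) is an explicit lattice determined by \(\mu_i,\mu_j\). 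The polynomial term is only an inhomogeneous shift of the equation.

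For each pair \(i<j\), the space \(V_{ij}=L^{k_i\times k_j}\) carries the Hom-isocrystal structure \(\Phi(X)=M_i\sigma(X)M_j^{-1}\). Since \(M\) is maximal with \(b\) basic in it, the slopes of the blocks are pairwise distinct. Hence \(V_{ij}\) is isoclinic of nonzero slope \(\lambda_i-\lambda_j\), and it is a direct sum of simple isocrystals of that slope. Multiplying the equation on the right by \(M_j^{-1}\) turns it into \(f(X)\in\Lambda'_{ij}\), with \(f=\Phi-\mathrm{id}\). The inhomogeneous term is absorbed as follows. Since \(f\) is surjective on \(V_{ij}\) (nonzero slope), we choose a particular solution, which depends algebraically on the lower-layer coordinates, and translate by it. The modding by \(N(\mathcal O_L)\) contributes \(t^e\Lambda_0=\mathcal O^{k_i\times k_j}\). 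Theorem~\ref{thm:intro-affine-quotient} then shows that each layer's solution set modulo integral matrices is an affine space. In the positive-slope case this is explicitly \(\Lambda'/f(\Lambda_0)\). In the negative-slope case we apply the theorem to the dual or inverse isocrystal, whose dimension is likewise given by a formula in \(\mu_i,\mu_j\) alone. Iterating over the layers, the fiber is an iterated affine-space fibration with constant total dimension. The transition maps are affine-linear in the new coordinates, so the fiber is itself an affine space.

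To get Zariski local triviality we make the choices of \(m\) vary algebraically. We cover \(X^M_{\mu_M}(b)=\prod_i X^{\GL_{k_i}}_{\mu_i}(b_i)\) by open subsets \(U\) on which there is a section \(x\mapsto m(x)\) and on which the lattices \(\Lambda'_{ij}\), \(f(\Lambda_0)\) have fixed relative position. Schubert-cell-type opens in the affine Grassmannian provide such sections. On each \(U\) we choose bases of \(\Lambda'/f(\Lambda_0)\) that vary algebraically, using the fact that \(f\) is a morphism of finite-type truncations with constant-rank cokernel.

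We expect this step to be the main obstacle. The reason is that the particular solutions, and the identifications provided by Theorem~\ref{thm:intro-affine-quotient}, must be chosen uniformly over \(U\), not point by point. The first approach we would try is to truncate modulo \(t^{R}\) for \(R\gg0\), which is uniform over quasi-compact \(U\). This reduces \(f\) to a morphism of finite-dimensional perfect vector bundles over \(U\) whose kernel and cokernel have constant rank. Such bundles are Zariski locally trivial, and that gives the trivializations.
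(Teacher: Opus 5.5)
Your layered route works, but one step is wrong as written. You justify the lattice form of the Cartan condition by ``because \(\mu\) is minuscule''. In general \(\mu\) is not minuscule for \(G\): only \(\mu_M\) is \(M\)-minuscule. Distinct block slopes can be far apart, so entries of \(\mu_M\) from different blocks can differ by more than one. For example, \(b=\operatorname{diag}(1,t^2)\in\GL_1\times\GL_1\subset\GL_2\) satisfies every hypothesis and has \(\mu=(2,0)\). So the sandwich \(t^a\calO\subset(\text{column lattice})\subset t^{a-1}\calO\) is not the Cartan condition. As a result, your key structural claim is left unproved: that at layer \(d\) the admissible \(X_{ij}\) form a coset of a \emph{fixed} lattice, shifted by a polynomial in the lower layers.

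The repair goes as follows.
Write \(M_i=\kappa_it^{\mu_i}\kappa_i'\) with \(\kappa_i,\kappa_i'\in\GL_{k_i}(\calO_L)\), and set \(\kappa'=\operatorname{diag}(\kappa_i')\).
Then \(u^{-1}D\sigma(u)\in Kt^\mu K\) is equivalent to \(t^{\mu_M}n'\in Kt^\mu K\) for \(n'=\kappa'D^{-1}u^{-1}D\sigma(u)\kappa'^{-1}\in N(L)\).
Since \(\mu_M\) is \(W\)-conjugate to \(\mu\), this holds iff \(n'\in(t^{-\mu_M}N(\calO_L)t^{\mu_M})\cdot N(\calO_L)\).
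This is the multi-block form of Corollary 2.2. It follows from \(N(L)t^{\lambda}K\cap Kt^{\lambda_{\mathrm{dom}}}K=N(\calO_L)t^{\lambda}K\) for extremal \(\lambda\).
Both factors are cut out by entrywise valuation bounds, so they are compatible with your filtration.
One then gets exactly \(M_i\sigma(X_{ij})-X_{ij}M_j\in c_{ij}+\kappa_i\Lambda^{\mathrm{std}}_{ij}\kappa_j'\), where \(\Lambda^{\mathrm{std}}_{ij}=\{Y:\ v(Y_{ab})\ge\min(\mu_{i,a},\mu_{j,b})\}\).
The shift \(c_{ij}\) depends only on the lower layers, and not on how they are factored.

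There are two smaller points.
First, when \(\lambda_i<\lambda_j\), the particular solution \(f^{-1}(c_{ij})\) involves \(\sigma^{-1}\). It is therefore algebraic in the lower coordinates only after perfection. This matches the paper's convention that \(X^G_\mu(b)\) is a perfect scheme, but you should say so; your ``affine-linear transition maps'' are then translations by perfectly algebraic functions.
Second, the affine-quotient theorem is stated for a simple isocrystal with an adapted lattice. Your \(V_{ij}\) is only isoclinic, and \(\calO_L^{k_i\times k_j}\) is not adapted. You need the isoclinic version (the proof via bijectivity of \(f\) and Lemma 7.1 carries over), or Corollary 7.3.
Also, \(g=mu\) gives \(g^{-1}b\sigma(g)=u^{-1}D\sigma(u)\) with no outer conjugation by \(m\). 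That slip is harmless.

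With these fixes, your argument is genuinely different from the paper's. The paper does not filter \(N\). It factors \(\beta\) into two-block reductions \(\beta_s\) through the intermediate Levis \(\GL_{k_1+\cdots+k_s}\times\GL_{k_{s+1}}\times\cdots\times\GL_{k_N}\). Each step is then a homogeneous equation \(M_{\le s}\sigma(X)-XM_{s+1}\in\Lambda_s\), with a fixed target and no lower-order terms. But it lives on the mixed-slope isocrystal \(\bigoplus_{i\le s}\Hom(V_{s+1},V_i)\). The paper asserts that \(\Lambda_s\) is compatible with this slope decomposition and applies the affine-quotient theorem summand by summand. It gets local triviality by shrinking charts and composing the \(\beta_s\).

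Your filtration buys several things:
\begin{enumerate}
\item Each \(V_{ij}\) is isoclinic, so no decomposition of the target lattice is needed.
\item You work entirely over \(X^M_{\mu_M}(b)\), with no intermediate non-basic ADLVs.
\item You never compose locally trivial bundles, a step that is not automatic.
\item You get an explicit fiber \(\prod_{i<j}f_{ij}^{-1}(\Lambda_{ij})/\calO_L^{k_i\times k_j}\) together with its dimension.
\end{enumerate}
The price is the lower-layer shifts. That is exactly why you need the corrected description of the Cartan condition above, and uniform choices of particular solutions over \(U\).
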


We do not claim in this generality that \(\beta\) is globally trivial or that it
is necessarily a vector bundle. An affine-space bundle becomes a vector bundle
only after showing that the transition functions are linear rather than merely
affine. Such a structure is available in certain canonical or Drinfeld cases,
but it is not automatic in the general non-Drinfeld setting.

Finally, in Section \(6\), we discuss examples in the non-minuscule case. These
examples show that the minuscule hypothesis is essential for the affine-space
behavior: outside the minuscule setting, factors such as \(\mathbb G_m\) may
appear, and the fibers need not be affine spaces.

\medskip

We now recall the basic notation used throughout the paper. Write \(B(G)\) for
the set of \(\sigma\)-conjugacy classes in \(G(L)\), and let
\[
\nu=\nu(\,\cdot\,):B(G)\longrightarrow (X_\bullet(T)_{\mathbb{Q}})^{\Gamma}_{\mathrm{dom}}
\]
be the Newton map to dominant, $\Gamma$-invariant rational cocharacters. Here
\[
\Gamma:=\Gal(L/F)=\Gal(\breve F/F)\cong\widehat{\mathbb Z},
\]
topologically generated by the arithmetic Frobenius $\sigma=\sigma_{L/F}$.  The
group $\Gamma$ acts on $X_\bullet(T)$ via the natural Galois action on $T_L$,
and $(\ \cdot\ )^{\Gamma}$ denotes the $\Gamma$-invariants. Since \(G=\GL_n\) is split, there is no nontrivial Galois
action on \(X_\bullet(T)\). For \(b\in G(L)\), we write
\[
  \nu_b:=\nu([b])
\]
for its Newton point. The \(\sigma\)-centralizer of \(b\) is the algebraic group
over \(F\) defined by
\[
  J_b(R)
  =
  \left\{
    g\in G(L\otimes_F R):
    g^{-1}b\sigma(g)=b
  \right\}.
\]
Its defect is
\[
  \operatorname{def}_G(b)
  =
  \operatorname{rank}_F G-\operatorname{rank}_F J_b.
\]

We say that \(b\) is \emph{basic} if its Newton point is central, i.e.
\[
  \nu_b\in X_\bullet(Z(G))_{\mathbb Q}.
\]
For \(G=\GL_n\), this is equivalent to saying that the associated isocrystal is
isoclinic. We say that \(b\) is \emph{superbasic} if it is not
\(\sigma\)-conjugate into any proper Levi subgroup of \(G\). For \(\GL_n\), this
means that the associated isocrystal is simple.

\paragraph{\textbf{A simplest version of classical Deligne-Lusztig varieties.}}
Let $G$ be an unramified connected reductive group over the finite field $\mathbb{F}_q$, $B\subset G$ a Borel subgroup with flag variety $G/B$, and $\Fr$ the $q$-Frobenius. For a Weyl group element $w\in W$, the \emph{classical Deligne-Lusztig variety} is
\[
X_w(1)\;:=\;\bigl\{\,gB\in G(\overline{\mathbb{F}}_q)/B:\ g^{-1}\Fr(g)\in BwB\,\bigr\}.
\]
These varieties are smooth, quasi-projective, and have
\[
\dim X_w(1)=\ell(w),
\]
the length of $w$. Deligne and Lusztig used $X_w(1)$ for $w$ in the finite Weyl group to classify the irreducible representations of the finite group of Lie type $G(\mathbb{F}_q)$. Affine Deligne-Lusztig varieties (ADLVs) are the natural $p$-adic/affine analogues obtained by replacing $B$ with a parahoric (e.g.\ $K=G(\mathcal O_L)$) and allowing a general element $b$; they arose from studying the geometry of Shimura varieties over finite fields.

\begin{fact}{Rapoport's dimension formula (see \cite[Conjecture~1.0.1]{GHKR06} and \cite{Vie06}).}
Let $G$ be an unramified connected reductive group,  and $\mu\in X_\bullet(T)$ dominant.  If $X^G_{\mu}(b)\neq\varnothing$, then
\begin{equation}\label{eq:Rapoport-dim}
\dim X^G_{\mu}(b)\;=\;\big\langle \rho,\;\mu-\nu_b\big\rangle\;-\;\frac{1}{2}\,\mathrm{def}_G(b),
\end{equation}
where $\rho$ is the half-sum of the positive roots (with respect to $B$). 
\end{fact}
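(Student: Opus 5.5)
The plan is the strategy of G\"ortz--Haines--Kottwitz--Reuman and Viehmann: reduce to the superbasic case by passing to a Levi subgroup, then settle the superbasic case by a combinatorial count. First I would make routine reductions. The quantities on both sides of \eqref{eq:Rapoport-dim} are unchanged under \(\sigma\)-conjugating \(b\), under replacing \(G\) by its adjoint group (applied to each connected component of \(X^G_\mu(b)\), which is indexed by \(\pi_1(G)\)), and under decomposing \(G_{\mathrm{ad}}\) into simple factors. So we may assume \(G\) is adjoint and \(F\)-simple, hence a Weil restriction \(\Res_{F'/F}H\) of an absolutely simple group. For Weil restrictions of unramified groups, the usual Shapiro-type argument identifies the affine Deligne--Lusztig varieties of \(\Res_{F'/F}H\) with those of \(H\) over \(F'\), with \(\mu\) replaced by the sum of its components. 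We may further take \(b\) to be a standard representative: \(b\) lies in the standard Levi \(M=\mathrm{Cent}_G(\nu_b)\), where it is basic.

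Second, I would reduce to the case where \(b\) is basic. Use the reduction-to-Levi morphism \(\beta\colon X^G_\mu(b)\to\bigsqcup_{\mu_M\in S_M(\mu,\nu_b)}X^M_{\mu_M}(b)\) from \cite[Proposition~5.6.1]{GHKR06}, built from the Iwasawa decomposition \(G(L)=P(L)K\) for \(P=MN\). The fibers of \(\beta\) over \(X^M_{\mu_M}(b)\) are the \(N(L)\)-parts. Because \(b\) is \(P\)-fundamental (\(\sigma\)-conjugation by \(b\) contracts \(N(\mathcal O_L)\)), one shows that these fibers have dimension \(\langle\rho,\mu+\mu_M\rangle-2\langle\rho_M,\mu_M\rangle-\langle\rho_N,\nu_b\rangle\)-type expressions; precisely, \(\langle\rho,\mu-\mu_M\rangle+\langle\rho_N,\mu_M-\nu_b\rangle\) adjusted so that adding the Levi formula \(\langle\rho_M,\mu_M-\nu_b\rangle-\tfrac12\mathrm{def}_M(b)\) gives the \(G\)-formula. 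The point is that \(\mathrm{def}_M(b)=\mathrm{def}_G(b)\), since \(J_b\) is an inner form of \(M\). One then maximizes over \(\mu_M\in S_M(\mu,\nu_b)\) using the \(M\)-dominance and the Mirković--Vilonen dimension of \(N(L)t^{\mu_M}K\cap Kt^\mu K\), which is \(\langle\rho,\mu+\mu_M\rangle\). In the same way, within \(M\) one reduces from basic to superbasic: a basic \(b\) is \(\sigma\)-conjugate into a minimal Levi in which it is superbasic, and one passes between the two via the class polynomial/Hecke algebra comparison of GHKR for basic \(b\).

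Third, and this is the main obstacle, I would handle the superbasic case. For \(F\)-simple adjoint \(G\) with a superbasic element, \(G\) is (a Weil restriction of) \(\PGL_n\), and \(b\) corresponds to a simple isocrystal of slope \(m/n\) with \((m,n)=1\). Here I would follow Viehmann: parametrize \(X^G_\mu(b)\) by lattices \(\Lambda\) in the isocrystal and stratify by the \emph{extended semi-module} \(A=\{\text{valuations of elements of }\Lambda\}\subset\mathbb Z\), a subset stable under \(a\mapsto a+m\) and \(a\mapsto a+n\). Each stratum is an iterated fibration in affine spaces, whose dimension is computed combinatorially from the ``type'' of the semi-module relative to \(\mu\). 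The hard combinatorial step is to show that the maximum over all semi-modules of type \(\mu\) equals \(\langle\rho,\mu-\nu_b\rangle-\tfrac12(n-1)\); here \(\mathrm{def}(b)=n-1\), since \(J_b\) is anisotropic mod center. I would try to do this by an explicit crossing-number count on the cyclic ordering of \(A\) modulo \(n\). Equidimensionality is not needed for the formula, and the upper and lower bounds come out simultaneously from this stratification.
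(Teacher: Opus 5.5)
The paper gives no proof of this statement; it quotes it from \cite{GHKR06} and \cite{Vie06}. Your outline follows the strategy of those references, but as an argument it leaves both substantive steps open.

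\emph{The fiber dimension in the Levi reduction.} You never actually compute it. The phrase ``adjusted so that adding the Levi formula gives the $G$-formula'' assumes the conclusion, and your explicit expression is wrong. Since $2\rho_N$ is a character of $M$ and $p_M(\mu_M)=\kappa_M(b)$, one has $\langle\rho_N,\mu_M\rangle=\langle\rho_N,\nu_b\rangle$. Hence
$\langle\rho,\mu+\mu_M\rangle-2\langle\rho_M,\mu_M\rangle-\langle\rho_N,\nu_b\rangle=\langle\rho,\mu-\mu_M\rangle+\langle\rho_N,\nu_b\rangle$.
In the $\GL_5$ example of Section~3 ($n=2$, $m=3$) this equals $\tfrac32$, whereas the fiber there is $\mathbb A^1$.

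The missing ingredient is the relative dimension formula $\dim f^{-1}(Y)-\dim Y=d(V,\Phi)$ of \cite{GHKR06}, applied on the graded pieces of $N$. Concretely, the fiber over $mK_M$ is $f^{-1}(Y)$ modulo $N(\mathcal O_L)$, where $f(n)=n^{-1}b'\sigma(n)b'^{-1}$ with $b'=m^{-1}b\sigma(m)$, and $Y=\{n\in N(L): nb'\in Kt^\mu K\}$. Mirkovi\'c--Vilonen, together with the renormalization from $t^{\mu_M}N(\mathcal O_L)t^{-\mu_M}$ to $N(\mathcal O_L)$, gives the dimension of $Y$ relative to $N(\mathcal O_L)$ as
$\langle\rho,\mu+\mu_M\rangle-\langle 2\rho_M,\mu_M\rangle-\langle 2\rho_N,\mu_M\rangle=\langle\rho,\mu-\mu_M\rangle$.
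For $\nu_b$ $P$-dominant the defect is $0$. Adding $\langle\rho_M,\mu_M-\nu_b\rangle-\tfrac12\operatorname{def}_M(b)$ then yields the $G$-formula for \emph{every} $\mu_M$, so nothing needs to be maximized.

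\emph{The basic-to-superbasic step.} This fails as written. Here every slope $\langle\alpha,\nu_b\rangle$ on $\operatorname{Lie}N$ vanishes, so $\sigma$-conjugation by $b$ contracts nothing. For $\operatorname{diag}(\tau_k,\tau_k)$ with $k\ge 2$, the map $X\mapsto\tau_k\sigma(X)\tau_k^{-1}$ does not even preserve $N(\mathcal O_L)$; it creates denominators $t^{-1}$. Invoking a class-polynomial comparison does not supply the step. What works is the same reduction applied to a $P$-fundamental representative, for which $\sigma$-conjugation by $b$ preserves $I\cap N$ for an Iwahori subgroup $I$. Slope-$0$ summands then have defect $0$ and only produce discrete factors, such as the $L^{\sigma^k}/\mathcal O_{L^{\sigma^k}}$ in the paper's case $n=m=k$.

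\emph{The superbasic case.} This is the whole content of \cite{Vie06}, and you only announce it. Its objects are also misdescribed. The valuation set $A$ of $\Lambda$ is stable under $a\mapsto a+m$ only when $F\Lambda\subseteq\Lambda$, and even then it does not determine $\operatorname{inv}(\Lambda,F\Lambda)$. That is why Viehmann's \emph{extended} semi-modules carry an additional function $\varphi$. This function controls both the stratum dimensions and the maximization over type $\mu$.

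\emph{Scope.} Your generality exceeds that of the cited sources. For $\operatorname{Res}_{F'/F}H$ one obtains a convolution-type variety for $H$ and $\sigma^{[F':F]}$, not $X^H_{\sum\mu_i}$, and its dimension requires the semismallness of convolution. The non-split unramified case and the mixed-characteristic case rest on later work of Hamacher and Zhu.
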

In particular, in situations where the right-hand side of \eqref{eq:Rapoport-dim} vanishes, the corresponding affine Deligne-Lusztig variety is zero-dimensional. For a Levi $M \subset G$, and $b \in M(L)$ basic in $M$, and a $M$-dominant minuscule cocharacter $\mu$ with $X^M_{\mu}(b)\neq\varnothing$, if $\dim X^M_{\mu}(b) = 0$, we refer to this zero-dimensional basic locus in a Levi \(M\) as the \emph{Drinfeld case} for \((M,\mu,b)\).
\medskip
\ \\
Let \(G = \GL_n\) and set a standard Levi subgroup $M$ of $G$ by:
\[
  M = \prod_{i=1}^r \GL_{n_i},\qquad \sum_{i=1}^r n_i = n,
\]
so that $M$ is ``$r$-block'' standard. Note that in this case $\Gamma$ acts on $X_\bullet(T)$ trivially. Let $b \in M(L)$ be an $M$-basic element, and $\mu\in X_\bullet(T)$ be a $G$-dominant cocharacter with $X^G_\mu(b)\neq\varnothing$. Let \(P=MN\) be the standard parabolic with Levi factor \(M\). Following Görtz--Haines--Kottwitz--Reuman,
we define \(S_M(\mu)\) to be the set of \(M\)-dominant cocharacters
\(\mu_M\) such that
\[
  (\mu_M)_{\text{dom}}\preceq\mu,\qquad N(L)x_{\mu_M}\cap Kx_\mu\neq\varnothing
\]
inside the affine Grassmannian \(G(L)/K\). Here
\[
  x_\lambda=t^\lambda K
\]
for a cocharacter \(\lambda\).

Let
\[
  p_M:X_\bullet(T)\longrightarrow \Lambda_M
\]
be the natural projection to the quotient of \(X_\bullet(T)\) by the coroot
lattice of \(M\). For \(\nu\in\Lambda_M\), set
\[
  S_M(\mu,\nu)
  :=
  \{\mu_M\in S_M(\mu):p_M(\mu_M)=\nu\}.
\]
In the reduction-to-Levi morphism, the relevant value of \(\nu\) is the
Kottwitz invariant of \(b\) in \(M\), equivalently the image of \(b\) under
\[
  \eta_M:M(L)\longrightarrow \Lambda_M.
\]
Thus the reduction-to-Levi map has target
\[
  \beta:
  X^G_\mu(b)
  \longrightarrow
  \bigsqcup_{\mu_M\in S_M(\mu,\eta_M(b))}
  X^M_{\mu_M}(b).
\]
%\[ %\Sigma(\mu)_M = \{
%\mu_M \in X_\bullet(T): \mu_M \text{\ is $M$-dominant}, (\mu_M)_{\text{dom}}\preceq\mu\},\] and the set \[ S_M(\mu) = \{
%\mu_M \in\Sigma(\mu)_M: \mu_M \text{\ is a maximal under $\preceq_M$ in $ \Sigma(\mu)_M$}\}.\]
%We then define
%\[S_M(\mu, v_b) = \{
%\mu_M \in S_M(\mu): X^M_{\mu_M}(b)\neq\varnothing\}.\]
%Notice that the projection of \(b\) onto each \(\GL_{n_i}\), denoted \(b_i\), is basic in \(\GL_{n_i}\). If there is an \(M\)-minuscule cocharacter \(\mu_{0}\in S_M(\mu,v_b)\) such that \((\mu_0)_{\mathrm{dom}}=\mu\), then \(\mu_0\) is the unique element of \(S_M(\mu,v_b)\). The reduction morphism
Notice first that the projection of \(b\) to each factor \(\GL_{n_i}\), say
\(b_i\), is basic in \(\GL_{n_i}\).  Let \(\nu_{b_i}=\lambda_i\cdot(1,\dots,1)\) be
its Newton point, so \(n_i\lambda_i=v(\det b_i)\).  For any
\(\mu_M\in S_M(\mu,v_b)\) write \(\mu_M=(\mu^{(1)},\dots,\mu^{(r)})\) with
\(\mu^{(i)}\in X_\bullet(\GL_{n_i})\) dominant.  The non-emptiness condition for
\(X^M_{\mu_M}(b)\) forces the blockwise sum constraint
\[
\langle \mathbf{1}_{n_i},\,\mu^{(i)}\rangle
=\sum_{a=1}^{n_i}\mu^{(i)}_a
= n_i\lambda_i
= v(\det b_i)\qquad(1\le i\le r).
\]
If further $\mu^{(i)}$ is minuscule in $\GL_{n_i}$, one must have $\mu^{(i)} = (\underbrace{1+q_i,\dots,1+q_i}_{p_i},q_i,\dots,q_i\bigr)$, where $q_i = \lfloor \lambda_i\rfloor$, $p_i = n_i(\lambda_i - \lfloor \lambda_i\rfloor)$. In what follows we are primarily interested in the case where \(\mu_M\) is
\(M\)-minuscule and \((\mu_M)_{\mathrm{dom}}=\mu\). In this case, for any element $\mu' \in S_M(\mu,v_b)$ and any $1 \leq i \leq r$, its $\GL_{n_i}$ component has the same weight as the $\GL_{n_i}$ component of $\mu_M$. The $\GL_{n_i}$ component of $\mu_M$ is minuscule for all $i$ so one has $\mu_M \preceq_M \mu'$, and hence by \((\mu')_{\text{dom}}\preceq\mu = (\mu_M)_{\text{dom}}\) one has $\mu_M = \mu'$. Hence in this case the set \(S_M(\mu,v_b)\) is a
singleton: there is a unique \(\mu_M\) compatible with \((\mu,b)\). The reduction morphism
\[
  \beta:\ X^G_{\mu}(b)\longrightarrow X^M_{\mu_M}(b)
\]
(on the unique non-empty summand) is induced by the canonical projection $G \twoheadrightarrow M$ coming from the Iwasawa decomposition relative to $P=MN$.

Our first goal is, in the Drinfeld case for $(M,\mu,b)$:
\begin{enumerate}
  \item Reformulate $\dim \beta^{-1}(x)$ for every geometric point $x\in X^M_{\mu_M}(b)$, which should be consistent with the Rapoport dimension formula.
  \item Give conditions under which $\beta$ is (Zariski) locally trivial, and in favorable cases a \emph{vector bundle}, with explicitly computed rank.
\end{enumerate}

Since any $F$-Levi subgroup of $G$ is $G(F)$-conjugate to a standard one, the statements hold for a general Levi subgroup of $G$. 

\medskip
\section{Background for the ADLV on $\GL_n$}
\paragraph{\textbf{Basic and superbasic elements in $\GL_n$.}}
For $G=\GL_n$, an element $b\in \GL_n(L)$ is \emph{basic} iff the isocrystal $(L^n,b\sigma)$ is \emph{isoclinic}, i.e. all its Newton slopes are equal: $\nu_b=\lambda\cdot(1,\ldots,1)$ with a single $\lambda\in \mathbb{Q}$.  It is \emph{superbasic} iff it is not $\sigma$-conjugate into any proper Levi of $\GL_n$, equivalently, the isocrystal is simple (no nontrivial slope-stable decomposition).

Let $\tau_n\in\GL_n(L)$ be the cyclic shift matrix defined by
\[
\tau_n e_1=t\,e_n,\qquad \tau_n e_i=e_{i-1}\ (2\le i\le n).
\]
Then one checks
\[
(\tau_n\,\sigma)^n = t\cdot\sigma^n,\qquad (t^q\tau_n^p\,\sigma)^n=t^{qn+p}\cdot\sigma^n,
\]
so $(L^n,\,t^q\tau_n^p\sigma)$ has single slope $\lambda=q+\tfrac{p}{n}$.  By the 
%Dieudonn\'e-Manin 
classification (see \cite{Kot97}), every isoclinic $n$-dimensional isocrystal of slope $\lambda$ is isomorphic to one of the form $(L^n,\,t^q\tau_n^p\sigma)$ with $0\le p<n$ and $q=\lfloor \lambda\rfloor$; in particular:
\begin{itemize}
  \item If $b$ is \emph{superbasic}, then $b$ is $\sigma$-conjugate to $t^q\tau_n^p$ with $\gcd(p,n)=1$ (the simple cyclic block of dimension $n$).
  \item In general, if $b$ is \emph{basic} of slope $\lambda$, then $b$ is $\sigma$-conjugate into a standard Levi to a block diagonal $\bigoplus_i t^{q_i}\tau_{s_i}^{p_i}$ with all ratios $p_i/s_i=\lambda-\lfloor\lambda\rfloor$ equal; thus $b$ is built from the same cyclic shift/wrap pattern as powers of $\tau_{\bullet}$.
\end{itemize}
Consequently, for purposes of our combinatorial/block computations on affine Deligne-Lusztig varieties, basic elements in $\GL_n$ may be treated as (block) powers of $\tau_n$, with the central factor $t^q$ merely shifting the (central) Newton slope and not affecting the cyclic index bookkeeping.\\
\begin{comment}
\noindent\textbf{Background for \(\GL_n\).}
For \(\GL_n\), \(\sigma\)-conjugacy classes are classified by isocrystals up to isomorphism, hence by Newton slopes.  An element \(b\in\GL_n(L)\) is basic iff its isocrystal is isoclinic (all slopes equal).  In this case, there are integers \(q\in\mathbb Z\) and \(p\) with \(0\le p<n\) such that
\[
b\ \sim_{\sigma}\ t^{\,q}\,\tau_n^{\,p},
\]
where \(\tau_n\) is the cyclic (companion) matrix with \(t\) in the \((n,1)\)-entry and \(1\) on the superdiagonal.  Thus, up to central powers of \(t\), a basic class is represented by a power of \(\tau_n\). 
\end{comment}
In the Drinfeld situation for \(\GL_n\) (i.e.\ the minuscule case with zero-dimensional basic locus), the Euclidean decomposition \(p\equiv 1\) or \(n-1\pmod n\) is forced; in other words, modulo inversion of the cyclic direction, one is essentially in the two cases \(b=\tau_n\) or \(b=\tau_n^{\,n-1}\). Taking \(\mu\) to be the corresponding minuscule coweight (so that the Hodge point matches the Newton slope in the basic setting), the condition
\(
g^{-1}b\,\sigma(g)\in K\,t^\mu K = K\,b\,K
\)
is equivalent to \(g^{-1}b\,\sigma(g)\in J_b\cdot K\), and the natural map
\[
J_b \longrightarrow X^{{\GL_n}}_{\mu}(b),\qquad j\longmapsto jK,
\]
induces an identification
\(
X^{{\GL_n}}_{\mu}(b)\cong J_b/(J_b\cap K),
\)
i.e.\ the basic locus in the Drinfeld case is a single \(J_b\)-orbit of hyperspecial cosets.

\subsection{Iwasawa decomposition}

Let \(P=MN\) be the standard parabolic with Levi \(M\) and unipotent radical \(N\).  By Iwasawa,
\[
  G(L) = P(L)\,K = M(L)\,N(L)\,K,
\]
and recall that the reduction to Levi map $\beta$ is defined by the natural projection: $$X^G = G(L)/K \longrightarrow M(L)/K_M = X^M.$$ From now on, for any algebraic subgroup $G'$ of $G$, we will simply denote the maximal compact subgroup $G'_K$ also by $K$ if there is no confusion.\\
\ \\
We first establish the argument for a ``2-block" standard Levi $M$ in $G$. Let \(G=\GL_{n+m}\), \(M=\GL_n\times\GL_{m}\), and $b \in M(L)$ a basic element in $M$. Write $b = \mathrm{diag}(b_1, b_2)$, and take $\mu_i$ to be the unique dominant minuscule cocharacter having the same totoal weight as $b_i$ in the corresponding block, and finally take $\mu = (\mu_1 \oplus \mu_2)_{\mathrm{dom}}$. Fix a representative $A\oplus B$ such that $\bigl(A\oplus B\bigr)\bmod K \in X^{\GL_n}_{\mu_1}(b_1)\;\times\;X^{\GL_{m}}_{\mu_2}(b_2)$, then any coset \(gK\in \beta^{-1}(A\oplus B)\) can be written as
\[
  g = \begin{pmatrix}
    A & C \\[4pt]
    0 & B
  \end{pmatrix},
  \quad
  %A\in\GL_k(L),\;
  %B\in\GL_{n-k}(L),\;
  C\in L^{\,n\times m}.
\]
 We try to use $C$ above to parametrize the fiber $\beta^{-1}(A\oplus B)$, then restricting to \(gK\in X^G_\mu(b)\) mapping to \(\bigl(A\oplus B\bigr)\bmod K\)
imposes two conditions on $C$:
\begin{enumerate}
  \item[(i)] When will two parameters $C_1$, $C_2$ represent the same coset in \(G(L)/K\),
  \item[(ii)] the admissibility \(g^{-1}b\,\sigma(g)\in K\,t^\mu\,K\).
\end{enumerate}
\subsection{Coset parametrization and admissibility}

Write
\[
  g = \begin{pmatrix}A & C\\[4pt]0 & B\end{pmatrix},
  \quad
  gK\in\beta^{-1}(A\oplus B).
\]
\begin{enumerate}
  \item \emph{Coset identification.}
  Two such blocks \(C_1,C_2\) define the same coset iff
  \[
    \begin{pmatrix}A & C_1\\0 & B\end{pmatrix}K
    =
    \begin{pmatrix}A & C_2\\0 & B\end{pmatrix}K
    \;\Longleftrightarrow\;
    A^{-1}(C_1-C_2)\in \calO_L^{\,n\times m}.
  \]

  \item \emph{Admissibility.}
  Computing
  \[
    g^{-1}
    = \begin{pmatrix}A^{-1}&-A^{-1}CB^{-1}\\[4pt]0&B^{-1}\end{pmatrix}, 
    \quad
    \sigma(g)
    = \begin{pmatrix}\sigma(A)&\sigma(C)\\[4pt]0&\sigma(B)\end{pmatrix},
  \]
  one finds
  \[
    g^{-1}b\,\sigma(g)
    =
    \begin{pmatrix}
      A^{-1}b_1\,\sigma(A) &
      A^{-1}\bigl(b_1\,\sigma(C) - C\,B^{-1}b_2\,\sigma(B)\bigr)\\[6pt]
      0 & B^{-1}b_2\,\sigma(B)
    \end{pmatrix}.
  \]

  Now we rewrite $g$ as
  \[
    g \;=\;
    \begin{pmatrix}A & C\\[4pt]0 & B\end{pmatrix}
    \;=\;
    (A\;\oplus\;B)\;\ltimes\;C',
    \qquad
    C' := A^{-1}C\;\in\;L^{n\times m}.
  \]
  Set
  \[
    M_1 \;=\; A^{-1}\,b_1\,\sigma(A)\;\in\; Kt^{\mu_1}K,
    \qquad
    M_2 \;=\; B^{-1}\,b_2\,\sigma(B)\;\in\;Kt^{\mu_2}K.
  \]
  Then the congruent condition for the new parameter $C'$ is rewritten as: \[
C'_1 \sim C'_2
\quad\Longleftrightarrow\quad
C'_1 - C'_2 \;\in\;\mathcal{O}_L^{\,n\times m}.\] and the admissibility condition
  \(\;g^{-1}b\,\sigma(g)\in K\,t^\mu\,K\)
  is equivalent to force 
$$\displaystyle
\begin{pmatrix}
M_1 & M_1\,\sigma(C') - C'\,M_2\\[3pt]
0   & M_2
\end{pmatrix} \in K\,t^\mu K.
$$
\end{enumerate}
  This reformulation in terms of \(C'\) and the conjugates \(M_1,M_2\) makes it clear that the fiber is cut out by a system of linear congruences in the entries of \(C'\). We firstly recall the following lemma: 
\noindent

\begin{lem}[Smith normal form criterion for Cartan decomposition]\label{lem:smith normal form criterion}
For any \(g\in\GL_n(L)\), define the ideal
\[
\Delta_k(g)
=(\text{all }k\times k\text{ minors of }g)\;\subset\;\calO_L,
\quad
k=1,\dots,n,
\]
Given integers 
\[
a_1\le a_2\le \cdots\le a_n,
\]
then
\[
g\;\in\;K\,\diag\bigl(t^{a_1},\dots,t^{a_n}\bigr)\,K
\]
if and only if
\[
v\bigl(\Delta_k(g)\bigr)
\;=\;
a_1 + a_2 + \cdots + a_k
\quad
\text{for all }k=1,\dots,n.
\]
\end{lem}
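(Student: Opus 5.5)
The plan is to reduce to the elementary divisor theorem over the discrete valuation ring \(\calO_L\), with the invariance of the determinantal ideals \(\Delta_k\) under left and right multiplication by \(K=\GL_n(\calO_L)\) as the key input. Here I read \(v(\Delta_k(g))\) as the minimum valuation of the \(k\times k\) minors of \(g\); this is the valuation of the fractional ideal they generate. Note that \(g\) need not be integral, so these are fractional ideals of \(L\).

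\emph{Step 1: invariance.} For \(k_1,k_2\in K\), the Cauchy--Binet formula writes each \(k\times k\) minor of \(k_1 g k_2\) as an \(\calO_L\)-linear combination of \(k\times k\) minors of \(g\), with coefficients that are products of minors of \(k_1\) and \(k_2\). Hence
\[
\Delta_k(k_1gk_2)\subseteq \Delta_k(g).
\]
Applying the same argument to \(g=k_1^{-1}(k_1gk_2)k_2^{-1}\) gives the reverse inclusion, so \(\Delta_k(k_1gk_2)=\Delta_k(g)\) for all \(k\).

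\emph{Step 2: computation on diagonal elements.} For \(d=\diag(t^{a_1},\dots,t^{a_n})\) with \(a_1\le\cdots\le a_n\), the nonzero \(k\times k\) minors are exactly the principal ones, \(t^{a_{i_1}+\cdots+a_{i_k}}\) for \(i_1<\cdots<i_k\). The minimum valuation among them is attained at \(\{1,\dots,k\}\), so
\[
v\bigl(\Delta_k(d)\bigr)=a_1+\cdots+a_k .
\]
Together with Step 1 this proves the forward implication.

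\emph{Step 3: the converse.} By the Cartan decomposition (Smith normal form over the PID \(\calO_L\), after scaling \(g\) by a power of \(t\) to make it integral), every \(g\in\GL_n(L)\) lies in \(K\diag(t^{b_1},\dots,t^{b_n})K\) for some \(b_1\le\cdots\le b_n\). If the valuations of \(\Delta_k(g)\) are \(a_1+\cdots+a_k\), then Steps 1 and 2 give
\[
b_1+\cdots+b_k=a_1+\cdots+a_k\qquad\text{for all }k .
\]
Taking successive differences yields \(b_k=a_k\) for every \(k\), so \(g\) lies in the stated double coset.

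The only step needing any care is the existence of the Smith normal form, which is classical. It can be obtained by a pivoting argument: move an entry of minimal valuation to the \((1,1)\) position using permutation matrices, clear the first row and column with elementary operations in \(K\), and induct on \(n\). I do not expect a genuine obstacle anywhere in this argument.
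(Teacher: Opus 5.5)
Your proof is correct and complete. The paper gives no proof of this lemma to compare against: it introduces it with ``We firstly recall the following lemma'' and uses it as a standard fact. The only ingredient it invokes explicitly is your Step~1, namely invariance of the $\Delta_k$ under multiplication by permutation matrices in $K$, at the start of the proof of Corollary~2.2. Your argument is the standard one: Cauchy--Binet invariance, evaluation on the diagonal representative, and existence of the Cartan (Smith) decomposition, so that successive differences of the partial sums pin down the exponents.

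Two points in your write-up are worth keeping.

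\emph{Fractional ideals.} Reading $\Delta_k(g)$ as a fractional ideal, with $v(\Delta_k(g))$ the minimal valuation of the $k\times k$ minors, is necessary and not just convenient. The statement's ``$\subset\calO_L$'' is only correct for integral $g$. The paper applies the lemma to non-integral matrices; for example, the necessity direction of Corollary~2.2 allows $v(c_{i_0j_0})<0$.

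\emph{Pivoting over a DVR.} Your pivoting sketch for Smith normal form works as written precisely because $\calO_L$ is a discrete valuation ring. There an entry of minimal valuation divides every other entry, so the first row and column can be cleared by elementary operations in $K$ without the extra gcd step a general PID would need.
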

\ \\
%Write $h = M_1\,\sigma(C') - C'\,M_2 = (h_{ij})$, 
Then to study what conditions should be imposed on $C'$, we need to apply the criterion of Lemma 2.1 for the given $M_1$, $M_2$.
\begin{cor}\label{cor:block‐monomial}
Let 
\[
g=\begin{pmatrix}A& C\\[4pt]0& B\end{pmatrix}
\;\in\;\GL_{n+m}(L),
\]
where \(A\in\GL_n(L)\) and \(B\in\GL_m(L)\) are \emph{monomial} matrices, i.e. each row of \(A\) and each column of \(B\) contains exactly one nonzero entry.  For \(1\le i\le n\), let \(\alpha_i=v(A_{i,\sigma(i)})\) be the valuation of the unique nonzero in row \(i\) of \(A\), and for \(1\le j\le m\), let \(\beta_j=v(B_{\tau(j),j})\) be the valuation of the unique nonzero in column \(j\) of \(B\).  Order the combined list
\(\{\alpha_1,\dots,\alpha_n,\beta_1,\dots,\beta_m\}\)
into 
\(\gamma_1\le\gamma_2\le\cdots\le\gamma_{n+m}\).  Then
\[
g\;\in\;
K\,\diag\bigl(t^{\gamma_1},\dots,t^{\gamma_{n+m}}\bigr)\,K
\]
if and only if for every entry \(c_{ij}\) of the off-diagonal block \(C\), one has
\[
v\bigl(c_{ij}\bigr)\;\ge\;
\min\{\alpha_i,\;\beta_j\}.
\]
\end{cor}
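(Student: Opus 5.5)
The plan is to reduce to the case where \(A\) and \(B\) are diagonal, prove sufficiency by explicit elementary operations in \(K\), and prove necessity with the minor criterion of Lemma~\ref{lem:smith normal form criterion}.

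\emph{Reduction to diagonal blocks.} Since \(A\) is monomial, write \(A=D_\alpha u_1 w_1\). Here \(D_\alpha=\diag(t^{\alpha_1},\dots,t^{\alpha_n})\), \(u_1\) is a diagonal matrix of units and \(w_1\) is a permutation matrix. Similarly write \(B=w_2u_2D_\beta\) with \(D_\beta=\diag(t^{\beta_1},\dots,t^{\beta_m})\). Right-multiplying \(g\) by \(\diag((u_1w_1)^{-1},I_m)\in K\) permutes and rescales only the first \(n\) columns, so it leaves \(C\) untouched. Left-multiplying by \(\diag(I_n,(w_2u_2)^{-1})\in K\) acts only on the last \(m\) rows, so it also leaves \(C\) untouched. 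Hence we may assume
\[
g=\begin{pmatrix}D_\alpha & C\\ 0 & D_\beta\end{pmatrix}.
\]
The entry \(c_{ij}\) now sits in row \(i\), whose diagonal entry is \(t^{\alpha_i}\), and in column \(n+j\), whose diagonal entry is \(t^{\beta_j}\). This matches the indexing in the statement. Neither the double coset \(Kg K\) nor the condition on \(C\) has changed.

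\emph{Sufficiency.} Assume \(v(c_{ij})\ge\min\{\alpha_i,\beta_j\}\) for all \(i,j\).
\begin{itemize}
\item If \(\alpha_i\le\beta_j\), subtract \(c_{ij}t^{-\alpha_i}\in\calO_L\) times column \(i\) from column \(n+j\). This is right multiplication by an element of \(K\).
\item If \(\beta_j<\alpha_i\), subtract \(c_{ij}t^{-\beta_j}\in\calO_L\) times row \(n+j\) from row \(i\). This is left multiplication by an element of \(K\).
\end{itemize}
Column \(i\) has the single nonzero entry \(t^{\alpha_i}\), and row \(n+j\) has the single nonzero entry \(t^{\beta_j}\). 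So each operation kills exactly \(c_{ij}\) and changes no other entry. Performing these operations for all \((i,j)\) gives \(g\in K\,\diag(D_\alpha,D_\beta)\,K=K\,\diag(t^{\gamma_1},\dots,t^{\gamma_{n+m}})\,K\).

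\emph{Necessity.} This is the step I expect to be the main obstacle, since it requires exhibiting a minor whose valuation is too small without cancellation. Suppose some entry violates the bound. First clear all entries that do satisfy the bound, using the same \(K\)-operations as above. Each such operation changes only its own entry, so the violating entries are unchanged, and the ideals \(\Delta_k(g)\) are preserved. Among the violating entries pick \(c=c_{ij}\) of minimal valuation \(v\), so that \(v<\alpha_i\) and \(v<\beta_j\). Let \(R\) be the set of diagonal positions whose valuation is at most \(v\); it excludes \(i\) and \(n+j\). Put \(k=|R|+1\). Then \(\gamma_k>v\), so
\[
\sum_{r\in R}\gamma_r+v<\gamma_1+\cdots+\gamma_k .
\]
Consider the \(k\times k\) minor with rows \(R\cup\{i\}\) and columns \(R\cup\{n+j\}\). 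Its expansion contains the term \(\pm c\prod_{r\in R}t^{\gamma_r}\), whose valuation is the left-hand side above. The claim to establish is that every other term of the expansion has strictly larger valuation. Each other term replaces some diagonal factors by remaining off-diagonal entries, all of valuation at least \(v\). In the resulting cycle structure, at least one diagonal factor \(t^{\gamma_r}\) with \(\gamma_r\le v\) is traded for an entry of valuation \(\ge v\) together with the loss of the bound \(v<\min\{\alpha_i,\beta_j\}\). If a tie occurs, I would refine the choice of \((i,j)\) lexicographically, for instance by minimizing \(\min\{\alpha_i,\beta_j\}\) among violators of valuation \(v\), to force strictness. Granting this, \(v(\Delta_k(g))<\gamma_1+\cdots+\gamma_k\), and Lemma~\ref{lem:smith normal form criterion} shows \(g\notin K\,\diag(t^{\gamma_1},\dots,t^{\gamma_{n+m}})\,K\).
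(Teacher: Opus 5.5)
\emph{Verdict: there is a gap in necessity, but it closes with a one-line observation.}

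Your reduction to $g=\begin{pmatrix}D_\alpha&C\\0&D_\beta\end{pmatrix}$ is correct, and it leaves $C$ and its indexing untouched. Your sufficiency argument by unipotent row and column operations in $K$ is also correct.

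The gap is in necessity, at exactly the step you flag. You never prove that the minor on rows $R\cup\{i\}$ and columns $R\cup\{n+j\}$ has valuation $v(c_{ij})+\gamma_1+\cdots+\gamma_{k-1}$; you only grant it. Your sketch (other terms trading diagonal factors for off-diagonal entries, plus a lexicographic tie-break) is not an argument, and the ties it worries about never arise.

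The missing observation is that there are no competing terms at all. In $g$, each left column $p\le n$ has its only nonzero entry $t^{\alpha_p}$ in row $p$. Likewise, each bottom row $n+q$ has its only nonzero entry $t^{\beta_q}$ in column $n+q$. Put $R_T=R\cap\{1,\dots,n\}$ and $R_B=R\setminus R_T$. Since $i\notin R$ and $n+j\notin R$, order the rows as $(R_T,\,i,\,R_B)$ and the columns as $(R_T,\,n+j,\,R_B)$. The submatrix is then block upper triangular:
\[
\begin{pmatrix}
\diag(t^{\alpha_p})_{p\in R_T} & \ast & \ast\\
0 & c_{ij} & \ast\\
0 & 0 & \diag(t^{\beta_q})_{n+q\in R_B}
\end{pmatrix}.
\]
So the minor equals $\pm c_{ij}\prod_{p\in R_T}t^{\alpha_p}\prod_{n+q\in R_B}t^{\beta_q}$ exactly. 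Its valuation is $v(c_{ij})+\gamma_1+\cdots+\gamma_{k-1}$, which is less than $\gamma_1+\cdots+\gamma_k$ because $\gamma_k>v(c_{ij})$. This works for any violating pair $(i,j)$. You can therefore drop the preliminary clearing of non-violating entries, the choice of a violator of minimal valuation, and the tie-breaking.

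With that line added, your proof is complete, and it differs from the paper's in two respects.
\begin{itemize}
\item \emph{Sufficiency.} The paper uses the Smith normal form criterion in both directions: it bounds every $k\times k$ minor from below and exhibits a principal minor attaining $\gamma_1+\cdots+\gamma_k$. Your explicit operations reduce $g$ to $\diag(D_\alpha,D_\beta)$ without any minors, which is shorter and more transparent.
\item \emph{Necessity.} You use the same kind of minor as the paper: the entry $c_{ij}$ together with diagonal entries of small valuation. Your specification of it is the precise one, with $R$ the set of positions of valuation at most $v(c_{ij})$ and $k=|R|+1$. The paper leaves $k$ unspecified. Its set $S$ may also contain the diagonal entry lying in row $i_0$ or column $n+j_0$ itself, in which case its submatrix is not well defined.
\end{itemize}
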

\begin{proof}[Proof of Corollary~\ref{cor:block‐monomial}]
By left and right multiplication with permutation matrices in \(K\) (which preserves the Cartan double coset and the determinantal ideals \(\Delta_k\)), we may assume
\[
A=\diag(t^{\alpha_1},\dots,t^{\alpha_n}),\qquad
B=\diag(t^{\beta_1},\dots,t^{\beta_m}),
\]
so that
\[
g=
\begin{pmatrix}
\diag(t^{\alpha_1},\dots,t^{\alpha_n}) & C\\[3pt]
0 & \diag(t^{\beta_1},\dots,t^{\beta_m})
\end{pmatrix}.
\]
Let \(\Gamma=\{\alpha_1,\dots,\alpha_n,\beta_1,\dots,\beta_m\}\) and write its nondecreasing rearrangement as \(\gamma_1\le\cdots\le\gamma_{n+m}\).

\smallskip
\noindent{\em Sufficiency.}
Assume that for all \(i,j\) one has
\[
v(c_{ij})\;\ge\;\min\{\alpha_i,\beta_j\}.
\]
Fix \(k\) and consider any nonzero \(k\times k\) minor of \(g\). Such a minor necessarily selects some set \(I\subset\{1,\dots,n\}\) of rows from the top block and some set \(J\subset\{1,\dots,m\}\) of rows from the bottom block, with \(|I|+|J|=k\), together with the same number of columns; because the lower-left block is zero, any chosen left (resp. right) column must be matched with a top (resp. bottom) row, except possibly when a top row is paired with a right column via an entry of \(C\).
Expanding the determinant as a sum over matchings, every monomial term is a product of factors, each of which is either \(t^{\alpha_i}\) (if a top row \(i\) is matched to its left column \(i\)), or \(t^{\beta_j}\) (if a bottom row \(j\) is matched to its right column \(j\)), or \(c_{ij}\) (if a top row \(i\) is matched to a right column \(j\) through \(C\)). By the hypothesis \(v(c_{ij})\ge\min\{\alpha_i,\beta_j\}\), the valuation of any such product is bounded below by the sum of the corresponding \(\alpha\)- and \(\beta\)-contributions in which each \(c_{ij}\) is replaced by \(\min\{\alpha_i,\beta_j\}\). In particular, the minimal possible valuation among all terms is achieved by the “block-diagonal” matching that avoids \(C\) altogether and pairs \(s\) top rows with \(s\) left columns and \(k-s\) bottom rows with \(k-s\) right columns, for some \(0\le s\le k\). Hence
\[
v\bigl(\Delta_k(g)\bigr)
\;=\;
\min_{0\le s\le k}
\Bigl(\alpha_{(1)}+\cdots+\alpha_{(s)}\;+\;\beta_{(1)}+\cdots+\beta_{(k-s)}\Bigr),
\]
where \(\alpha_{(r)}\) (resp.\ \(\beta_{(r)}\)) denotes the \(r\)-th smallest \(\alpha\) (resp.\ \(\beta\)).
This minimum is exactly \(\gamma_1+\cdots+\gamma_k\). By the Smith normal form criterion (Lemma~\ref{lem:smith normal form criterion}), we conclude
\[
g\in K\,\diag\bigl(t^{\gamma_1},\dots,t^{\gamma_{n+m}}\bigr)\,K.
\]

\smallskip
\noindent{\em Necessity.}
Conversely, suppose there exist indices \((i_0,j_0)\) with
\[
v(c_{i_0 j_0})\;<\;\min\{\alpha_{i_0},\beta_{j_0}\}.
\]
We will produce a \(k\times k\) minor whose valuation is strictly smaller than \(\gamma_1+\cdots+\gamma_k\), contradicting the Smith criterion.

Let \(S\) be the multiset of the \(k-1\) smallest elements of \(\Gamma\setminus\{\max(\alpha_{i_0},\beta_{j_0})\}\); that is, from \(\Gamma\) remove the larger of \(\alpha_{i_0},\beta_{j_0}\) and take the \(k-1\) smallest remaining elements. Then
\[
\gamma_1+\cdots+\gamma_k \;=\; \min\{\alpha_{i_0},\beta_{j_0}\} \;+\; \sum_{x\in S} x.
\]
Form the \(k\times k\) submatrix by choosing:
\begin{itemize}
  \item the top row \(i_0\) and the right column \(n+j_0\) (thus contributing the entry \(c_{i_0j_0}\));
  \item for each \(\alpha_i\in S\), the top row \(i\) and the left column \(i\) (contributing \(t^{\alpha_i}\));
  \item for each \(\beta_j\in S\), the bottom row \(n+j\) and the right column \(n+j\) (contributing \(t^{\beta_j}\)).
\end{itemize}
Because all other chosen entries lie on the block diagonal and the lower-left block is zero, this submatrix is upper triangular up to a single off-diagonal \(c_{i_0j_0}\), so its determinant is (up to a unit)
\[
\det = c_{i_0j_0}\cdot \prod_{\alpha_i\in S} t^{\alpha_i}\cdot \prod_{\beta_j\in S} t^{\beta_j},
\]
and therefore
\[
v(\det)\;=\;v(c_{i_0j_0})\;+\;\sum_{x\in S}x
\;<\;\min\{\alpha_{i_0},\beta_{j_0}\}\;+\;\sum_{x\in S}x
\;=\;\gamma_1+\cdots+\gamma_k.
\]
Thus \(v(\Delta_k(g))<\gamma_1+\cdots+\gamma_k\), contradicting the Cartan/Smith criterion. Hence necessarily \(v(c_{ij})\ge\min\{\alpha_i,\beta_j\}\) for all \(i,j\).

\smallskip
Combining the two directions completes the proof.
\end{proof}
\medskip
\ \\
We give an example below to help readers better understand the above Corollary.\\
\begin{ex}{($A=\tau_2$, $B=\tau_3$).}
Recall
\[
\tau_2=\begin{pmatrix}0&1\\ t&0\end{pmatrix},\qquad
\tau_3=\begin{pmatrix}0&1&0\\ 0&0&1\\ t&0&0\end{pmatrix}.
\]
These are monomial. By rows of $A$ we get
\[
\alpha_1=v(1)=0,\quad \alpha_2=v(t)=1,
\]
and by columns of $B$ we get
\[
\beta_1=v(t)=1,\quad \beta_2=v(1)=0,\quad \beta_3=v(1)=0.
\]
The multiset $\{\alpha_1,\alpha_2,\beta_1,\beta_2,\beta_3\}=\{0,1,1,0,0\}$ rearranges to
\[
\gamma_\bullet=(0,0,0,1,1).
\]
Hence the corollary says that
\[
g=\begin{pmatrix}A&C\\ 0&B\end{pmatrix}\in
K\,\diag(1,1,1,t,t)\,K
\quad\Longleftrightarrow\quad
v(c_{ij})\ge \min\{\alpha_i,\beta_j\}.
\]
Writing the $2\times3$ matrix $C=(c_{ij})$, the inequalities are
\[
\begin{array}{c|ccc}
 & j=1 & j=2 & j=3\\ \hline
i=1 & v(c_{11})\ge 0 & v(c_{12})\ge 0 & v(c_{13})\ge 0\\
i=2 & v(c_{21})\ge 1 & v(c_{22})\ge 0 & v(c_{23})\ge 0
\end{array}
\]
Thus \emph{only} the lower-left entry must be divisible by $t$; all other
off-block entries need only be integral. If, for instance, $v(c_{21})=0$, one can
choose a $4\times4$ minor using $c_{21}$ whose valuation drops below
$\gamma_1+\cdots+\gamma_4=1$, contradicting the Smith normal form criterion.
\end{ex}

\section{Triviality of \(\beta\) for the Drinfeld Case in a $2$-Block Levi $M$}

Let \(r> 1\) and recall that \(\tau_r\in\GL_r(L)\) denotes the \emph{canonical superbasic element}. By definition, one computes
\[
\tau_r \;=\;
\begin{pmatrix}
0      & 1      &        &   &   \\[-3pt]
\vdots       & \ddots & \ddots &   &   \\
0       &        & \ddots     & 1 &   \\[-3pt]
t       &0        & \cdots   & 0 &   
\end{pmatrix},
\]
i.e.\ the unique \(r\times r\) matrix with \(t\) in the lower-left corner, \(1\) on the super-diagonal, and \(0\) elsewhere. Moreover notice that $\tau_1 = (t) \in \GL_1(L)$.\\
\subsection{The canonical Drinfeld case}
We firstly study the case where $b$ is canonically superbasic in a $2$-Block Levi $M$, say the component of $b$ in each $r \times r$ block is $\tau_r$. We retain all notation from above: \(G=\GL_{n+m}\), \(M=\GL_n\times\GL_{m}\), and for the canonical superbasic
\[
b = \diag(\tau_n,\tau_{m})\;\in\;M(L)
\]
we write \(\mu=(1,1,0,\dots,0)\) and \(\mu_M=(1,0,\dots,0)\oplus(1,0,\dots,0)\).  Recall that for the superbasic element \(\tau_r\in\GL_r(L)\) and the minuscule coweight \(\omega_r=(1,0,\dots,0)\), the affine Deligne-Lusztig variety
\[
X^{\GL_r}_{\omega_r}(\tau_r)
\;=\;
\bigl\{\,gK\in\GL_r(L)/K\mid g^{-1}\tau_r\,\sigma(g)\in K\,t^{\omega_r}\,K\bigr\}
\]
identifies canonically with the \(\sigma\)-centralizer quotient
\[
X^{\GL_r}_{\omega_r}(\tau_r)
\;\cong\;
J_{\tau_r}\big/ \bigl(J_{\tau_r}\cap K\bigr).
\]
In particular, in our case
we may fix
\[
M_1=\tau_n\in\GL_n(L), 
\qquad
M_2=\tau_{m}\in\GL_{m}(L).
\]
Notice that we are mainly interested in the case where $b$ is NOT basic in $G$, in which case $m \neq n$, but now we also include the case $m = n$ for the study of our whole setting which can be seen later.
%once and for all to represent the two basic loci
%\(\;X^{\GL_n}_{\omega_n}(\tau_n)\) and \(X^{\GL_{m}}_{\omega_{m}}(\tau_{m})\).  
\\
Writing
\[
C' \;=\;A^{-1}C\;\in\;L^{\,n\times m},
\quad
h \;=\;M_1\,\sigma(C') \;-\; C'\,M_2
=\bigl(h_{ij}\bigr)_{1\leq i\leq n, 1\leq j\leq m},
\]
Applying Corollary~\ref{cor:block‐monomial},  \(\;g^{-1}b\,\sigma(g)\in K\,t^\mu\,K\) iff:
\[
h_{n,1}
\;=\;
(\tau_n\,\sigma(C')-C'\,\tau_{m})_{n,1}
\;\in\;t\,\calO_L,
\quad
h_{ij}\;\in\;\calO_L
\;\text{for all other }(i,j).
\]
\ \\
\begin{thm}\label{thm:two-block triviality}
Fix integers $n, m \geq 1$. Let \(G=\GL_{n+m}\), \(M=\GL_n\times\GL_{m}\), and set
\[
b=\diag\bigl(\tau_n,\tau_{m}\bigr),
\quad
\mu=(1,1,0,\dots,0).
\]
Then for every point
\((A\oplus B)\bmod K\in X^M_{\mu_M}(b)\), the fiber of
\(\beta\colon X^G_\mu(b)\to X^M_{\mu_M}(b)\) has dimension
\[
\dim_{\bar \kappa}\,\beta^{-1}(A\oplus B)
\;=\;
\min\{n,m\}-1,
\]
in exact agreement with the Rapoport dimension formula
\[
\dim(X^G_\mu(b)) = \bigl\langle\rho,\mu-\bar v_b\bigr\rangle
\;-\;\tfrac12\,\mathrm{def}_G(b).
\]
Moreover, as varieties over \(\bar \kappa\), $\beta$ is a trivial bundle with fiber
\[
\begin{cases}
\displaystyle
\mathbb A^{\,\min\{n,m\}-1}
&\text{if \(n \neq m\),}\\[1ex]
\displaystyle
\mathbb A^{\,k-1}\;\times\;
\Bigl(L^{\sigma^k}/\calO_{L^{\sigma^k}}\Bigr)^k
&\text{if \(n = m = k\).}
\end{cases}
\]
\end{thm}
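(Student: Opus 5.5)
We work in the off-block coordinate \(C'=A^{-1}C\), so the fiber over \((A\oplus B)\bmod K\) is
\[
\{C'\in L^{n\times m}: h=\tau_n\sigma(C')-C'\tau_m\in\Lambda\}\big/\calO_L^{n\times m},
\]
where \(\Lambda\) consists of matrices with integral entries and \(h_{n,1}\in t\calO_L\), as derived above from Corollary~\ref{cor:block‐monomial}. Since \(X^{\GL_r}_{\omega_r}(\tau_r)\cong J_{\tau_r}/(J_{\tau_r}\cap K)\), every base point can be written with \(A\in J_{\tau_n}\) and \(B\in J_{\tau_m}\). Then \(M_1=\tau_n\) and \(M_2=\tau_m\) identically, so the fiber equations do not depend on the base point. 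Left multiplication by \(j=(A,B)\in J_b\cap M\) on \(X^G_\mu(b)\) is compatible with \(\beta\) and carries the fiber over the base point \(1\) to the fiber over \(j\). This gives a global trivialization \(X^G_\mu(b)\cong X^M_{\mu_M}(b)\times\beta^{-1}(1)\), which is well defined because the base is discrete (zero-dimensional). It therefore remains to compute the single fiber \(F:=\beta^{-1}(1)\).

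For the entries, note that \((\tau_n\sigma(C'))_{ij}=\sigma(c_{i+1,j})\) for \(i<n\) and \(=t\sigma(c_{1,j})\) for \(i=n\). Likewise \((C'\tau_m)_{ij}=c_{i,j-1}\) for \(j>1\) and \(=t\,c_{i,m}\) for \(j=1\). Hence
\[
h_{ij}=\sigma^{\epsilon}(c_{i+1,j})\,t^{\delta}-c_{i,j-1}\,t^{\delta'},
\]
with indices read cyclically: we pass from \(i\) to \(i+1\) mod \(n\), picking up a factor \(t\) at the wrap, and from \(j\) to \(j-1\) mod \(m\), picking up a factor \(t\) at the wrap. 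The index map \((i,j)\mapsto(i+1,j+1)\) on \(\bbZ/n\times\bbZ/m\) has \(\gcd(n,m)\) orbits, each of length \(\mathrm{lcm}(n,m)\). Along each orbit the conditions form a chain of congruences \(c_{\text{next}}\equiv\sigma^{-1}(\text{twisted previous})\) modulo \(\calO_L\), with a total \(t\)-power \(t^{\mathrm{lcm}/n}\) from the row wraps and \(t^{\mathrm{lcm}/m}\) from the column wraps. Going once around an orbit therefore yields one equation of the form
\[
t^{a}\sigma^{\ell}(x)-t^{b'}x\in(\text{lattice}),\qquad \ell=\mathrm{lcm}(n,m),
\]
where \(x\) is one chosen entry of the orbit; the exceptional entry \((n,1)\) shifts one lattice by \(t\). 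All other entries of the orbit are then determined by \(x\) modulo \(\calO_L\), up to explicit truncations.

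\textbf{Case \(n\neq m\).} Here \(a\neq b'\), so the relevant isocrystal has nonzero slope. I would apply the nonzero-slope principle (Theorem~\ref{thm:intro-affine-quotient}, or directly the valuation estimate: the dominant term of \(t^a\sigma^\ell(x)-t^{b'}x\) is the one with the smaller \(t\)-power). This shows that each orbit contributes a finite truncation window of \(x\) whose coefficients are free, namely the Teichmüller digits of \(x\) in a range of valuations. The length of this window is controlled by the wrap counts and by whether the orbit contains the special entry \((n,1)\), which tightens the condition by one. Adding the windows over all \(\gcd(n,m)\) orbits should give \(\min\{n,m\}-1\). I would check this total by comparing with Rapoport's formula (the Fact): \(\langle\rho,\mu-\nu_b\rangle\) for \(\mu=(1,1,0,\ldots)\) and \(\nu_b=(1/n^{(n)},1/m^{(m)})_{\mathrm{dom}}\), together with \(\mathrm{def}_G(b)\) computed from \(J_b\cong D_{1/n}^\times\times D_{1/m}^\times\). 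Since the digit coordinates are free, the fiber is an affine space.

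\textbf{Case \(n=m=k\).} Now the \(k\) orbits have length \(k\) and the \(t\)-powers cancel. The orbit equation becomes \(\sigma^k(x)-x\in\) lattice, up to the extra \(t\) on the single orbit through \((n,1)\). The solutions of \(\sigma^k(x)\equiv x\) modulo \(\calO_L\) are \(L^{\sigma^k}+\calO_L\), which yields the discrete factor \(L^{\sigma^k}/\calO_{L^{\sigma^k}}\) for each orbit. The special orbit should additionally contribute the free digits giving \(\bbA^{k-1}\). I expect the \textbf{main obstacle} to be exactly this bookkeeping in both cases: tracking which residues are genuinely free after quotienting by \(\calO_L^{n\times m}\) and after the single \(t\)-tightening at \((n,1)\), so that the count comes out to exactly \(\min\{n,m\}-1\). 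I would first work out the cases \((n,m)=(2,3)\) and \((2,2)\) by hand to calibrate the windows, and then argue by induction on the orbit length.
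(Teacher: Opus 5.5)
Your reduction to the single fiber $\beta^{-1}(1)$ via the $J^M_b(F)$-action is correct, and cleaner than the paper's ``choose coordinates uniformly''. Your entrywise formulas and orbit structure are also right. The gap is in the next step. You collapse each orbit to one composite equation $t^a\sigma^\ell(x)-t^{b'}x\in(\text{lattice})$ in a single entry $x$, assert that the other entries are determined by $x$ modulo $\mathcal{O}_L$, and take as coordinates a window of Teichm\"uller digits of $x$. This fails. At a column-only wrap the condition is $h_{i,1}=\sigma(c_{i+1,1})-t\,c_{i,m}\in\mathcal{O}_L$ ($1\le i<n$), which determines $c_{i,m}$ only modulo $t^{-1}\mathcal{O}_L$. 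So every such $t^{-1}\sigma$-step introduces a new free digit that is independent of $x$, and the orbit's solution set does not inject into the values of $x$ modulo $\mathcal{O}_L$.

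Concretely, for $(n,m)=(3,4)$ there is a single orbit, and a direct check shows the fiber is $\mathbb{A}^2$ with coordinates $c_{2,4},c_{1,4}\in t^{-1}\mathcal{O}_L/\mathcal{O}_L$, with $c_{1,3}\equiv\sigma(c_{2,4})$ and all other entries in $\mathcal{O}_L$. Every entry lies in $t^{-1}\mathcal{O}_L$, so a digit window in one entry accounts for at most one of the two parameters. The ``digits of $x$'' therefore cannot be the affine coordinates. Deferring the total count to Rapoport's formula does not repair this, and it would turn the claimed ``exact agreement'' into an input rather than a conclusion.

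The missing idea is the paper's chain decomposition: cut each loop at its $t^{-1}\sigma$-steps. For $n<m$, two consecutive column wraps along an orbit are separated by a row-only wrap, so each $t^{-1}\sigma$-step is followed by a $t\sigma$-step before the next one. The stretch between them (the paper's $S_i$) carries exactly one free digit in $t^{-1}\mathcal{O}_L/\mathcal{O}_L$. After collapsing these stretches, the loop has net exponent $\ell/n-\ell/m>0$, which forces every remaining entry into $\mathcal{O}_L$. The number of $t^{-1}\sigma$-steps is $n$ column wraps minus the one simultaneous wrap carrying the tightened $h_{n,1}$. This gives free coordinates $c_{1,m},\dots,c_{n-1,m}$ and the fiber $\mathbb{A}^{n-1}$.

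The same bookkeeping shows your $n=m=k$ paragraph is wrong. The orbit carrying the $h_{n,1}$-condition is the main diagonal $\{c_{i,i}\}$. It is pure $\sigma$ and contributes only $L^{\sigma^k}/\mathcal{O}_{L^{\sigma^k}}$, with no affine part. Each of the $k-1$ off-diagonal orbits has exactly one $t\sigma$-step and one $t^{-1}\sigma$-step, so its orbit equation holds only modulo $t^{-1}\mathcal{O}_L$. Each such orbit therefore gives $(L^{\sigma^k}+t^{-1}\mathcal{O}_L)/\mathcal{O}_L$, i.e.\ one $\mathbb{A}^1$ times a discrete set. A single orbit of length $k$ with a tightened condition cannot produce $k-1$ free digits, so your attribution of all of $\mathbb{A}^{k-1}$ to the special orbit cannot be right.
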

\medskip
\ \\
Before proving the main theorem, we introduce a convenient shorthand and establish an integrality lemma.

\begin{dfn}
  For elements \(w,v\in L\), we write
  \[
    w \overset{\sigma} \longrightarrow  v
    \quad\Longleftrightarrow\quad
    \sigma(w) \;-\;v\;\in\;\calO_L .
  \]
  Since for any $w \in L$, $\sigma$ has finite order on the image $\bar{w} \in L/\calO_L$, one sees that the relation $\overset{\sigma} \longrightarrow$ generates an equivalent relation $\overset{\sigma}{\thicksim}$ on $L/\calO_L$, and each equivalence class of it has finitely many elements.\\
  \ \\
   For elements \(w,v\in L\), we write
  \[\begin{aligned}
    w \overset{t\sigma} \longrightarrow  v
    \quad\Longleftrightarrow\quad
    t\sigma(w) \;-\;v\;\in\;\calO_L , \\    
    w \overset{t^{-1}\sigma} \longrightarrow  v
    \quad\Longleftrightarrow\quad
    \sigma(w) \;-\;tv\;\in\;\calO_L .
  \end{aligned} 
  \] It is easy to see that $\overset{t\sigma} {\longrightarrow}$, $\overset{t^{-1}\sigma} {\longrightarrow}$ induce a reverse relation pair on $(L/\calO_L)/\overset{\sigma} {\thicksim}$\ \ which we denote by $\overset{t} {\longrightarrow}$, $\overset{t^{-1}} {\longrightarrow}$ respectively. Moreover, one has \[(w\overset{t} {\longrightarrow} v) \wedge (w\overset{t} {\longrightarrow}  u) \quad\Longrightarrow\quad v = u,\] as well as \[w\overset{t} {\longrightarrow} w\quad\Longleftrightarrow\quad w = \calO_L, \text{\ the trivial class},\] that is, the relation $\overset{t} {\longrightarrow}$ extends to a cotree order (i.e. each element in the set has at most $1$ successor) on $(L/\calO_L)/\overset{\sigma} {\thicksim}$ with the unique terminal element $\calO_L$.
\end{dfn}

\begin{lem}[Integrality on a Cyclic Tree]\label{lem:cyclic-tree}
Let \(R=\{r_1,\dots,r_N\}\subset L\) be a finite set whose images in \(L/\calO_L\) form a cycle under the relations \(\overset{\sigma}{\longrightarrow}\), \(\overset{t\sigma}{\longrightarrow}\), or \(\overset{t^{-1}\sigma}{\longrightarrow}\).  Suppose furthermore that \(R\) admits a \emph{piecewise partition}
\[
R \;=\;(\bigsqcup_{i=1}^r W_i)\sqcup (\bigsqcup_{i=1}^s S_i)
\]
into disjoint subchains \(W_i\) and \(S_i\) satisfying:

\begin{enumerate}
  \item[(1)] Within each \(W_i\) or \(S_i\), every arrow is of type \(\overset{\sigma}{\longrightarrow}\).
  \item[(2)] For each adjacent pair \(W_i, W_j\), they are connected by an arrow of type \(\overset{\sigma}{\longrightarrow}\) or \(\overset{t^{-1}\sigma}{\longrightarrow}\) ($\overset{t\sigma}{\longrightarrow}$ resp.).
  \item[(3)] In each \(S_i=\{s_{i,1},\dots,s_{i,r_i}\}\), the first element \(s_{i,1}\) admits an incoming arrow of type \(\overset{t^{-1}\sigma}{\longrightarrow}\), and the last element \(s_{i,r_i}\) admits an outgoing arrow of type \(\overset{t\sigma}{\longrightarrow}\).
  \item[(4)] At least one adjacent pair\((W_i,W_j)\) admits an arrow of type \(\overset{t^{-1}\sigma}{\longrightarrow}\) ($\overset{t\sigma}{\longrightarrow}$ resp.).
\end{enumerate}

\noindent Then:

\begin{itemize}
  \item Every element of each \(W_i\) belongs to \(\calO_L\).
  \item Within each \(S_i\), once one entry is fixed modulo \(\calO_L\), all others are uniquely determined modulo \(\calO_L\).  Equivalently, the set of solutions in \(S_i\) is a torsor under the one-dimensional \(\bar{\kappa}\)-vector space \(t^{-1}\calO_L/\calO_L\).
\end{itemize}
%In other words, all $W_i$ are in the same $\sigma$-equivalent class $\calO_L$, and each $S_i$ lies in a %successor of this class.
\end{lem}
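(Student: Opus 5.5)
The idea is to use valuations: going once around the cycle, the net power of \(t\) that accumulates is strictly positive, and this forces integrality.

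\emph{Step 1: valuations along a chain.} For \(w\in L\) put \(\bar v(w)=\min\{v(w),0\}\); this only depends on the class of \(w\) in \(L/\calO_L\). Since \(\sigma\) preserves valuations, the three arrow types translate as follows:
\begin{itemize}
\item if \(w\overset{\sigma}{\longrightarrow} v\) then \(\bar v(v)=\bar v(w)\);
\item if \(w\overset{t\sigma}{\longrightarrow} v\) then \(\bar v(v)\ge \bar v(w)+1\), truncated at \(0\);
\item if \(w\overset{t^{-1}\sigma}{\longrightarrow} v\) then \(\bar v(v)=\bar v(w)-1\) whenever \(v\notin\calO_L\).
\end{itemize}
I would treat the case where the inter-\(W\) arrows are of type \(\overset{t^{-1}\sigma}{\longrightarrow}\) (or \(\overset{\sigma}{\longrightarrow}\)). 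The other case is obtained by reversing the orientation of the cycle, since \(\overset{t\sigma}{\longrightarrow}\) and \(\overset{t^{-1}\sigma}{\longrightarrow}\) form a reverse pair on \((L/\calO_L)/\overset{\sigma}{\thicksim}\).

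\emph{Step 2: the \(W_i\) are integral.} Traverse the cycle starting at an element of some \(W_i\). Inside each \(W_i\) and each \(S_i\) the truncated valuation is constant, by (1). Each \(S_i\) is a detour: it is entered by a \(t^{-1}\sigma\) arrow and left by a \(t\sigma\) arrow, by (3). So the net shift across an \(S_i\) is \(-1+1\le 0\) in the direction that lowers \(\bar v\) under the inverse relation. Passing between adjacent \(W\)'s either keeps \(\bar v\) or moves it by one, in the direction fixed by (2), and by (4) this happens at least once.

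Summing around the closed cycle, the return to the starting element forces a strict inequality \(\bar v(r)>\bar v(r)\) unless everything truncates to \(0\). Hence every element of every \(W_i\) lies in \(\calO_L\).

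The careful bookkeeping is the key point here. Truncation at \(0\) only happens when the class reaches \(\calO_L\), which is the unique terminal element of the cotree order. So I would phrase the argument as: the class of any \(W\)-element has a successor chain that must return to itself. A cotree with a unique fixed point \(\calO_L\) admits no other cycle, so the class is \(\calO_L\).

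\emph{Step 3: the \(S_i\) are torsors.} Knowing that the neighbouring \(W\)-elements are integral, the entering arrow \(\sigma(w)-t s_{i,1}\in\calO_L\) with \(w\in\calO_L\) gives \(s_{i,1}\in t^{-1}\calO_L\). The \(\sigma\)-arrows inside \(S_i\) then propagate this: each \(s_{i,k+1}\equiv\sigma(s_{i,k})\) modulo \(\calO_L\), so the entire chain is determined by the class of \(s_{i,1}\) in \(t^{-1}\calO_L/\calO_L\cong\bar\kappa\).

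The exit arrow \(t\sigma(s_{i,r_i})-w'\in\calO_L\) is automatically satisfied, since \(t\cdot t^{-1}\calO_L\subset\calO_L\). Therefore every class of \(s_{i,1}\) is allowed. Fixing one entry determines the others, because \(\sigma\) is bijective on \(t^{-1}\calO_L/\calO_L\). This gives a torsor under the one-dimensional space \(t^{-1}\calO_L/\calO_L\).

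\emph{Main obstacle.} I expect the main difficulty to be Step 2: making the monotonicity of \(\bar v\) around the cycle rigorous when arrows are only congruences, so that truncation can hide information. I would first try to handle this through the cotree structure on \((L/\calO_L)/\overset{\sigma}{\thicksim}\), rather than through raw valuations.
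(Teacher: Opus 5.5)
Your proposal is correct and is essentially the paper's argument. The paper likewise observes that each $S_i$-detour (entered by $t^{-1}\sigma$, left by $t\sigma$) returns to the same $\sigma$-class, and then contracts the cycle to a loop of $\sigma$ and $t^{\mp 1}\sigma$ arrows, where (4) together with the cotree structure (unique terminal class $\calO_L$) forces every $W_i$ into $\calO_L$; finally it re-inserts each $S_i$ as one free class in $t^{-1}\calO_L/\calO_L$. Your truncated valuation $\bar v$ is an explicit depth function on that cotree and makes the paper's sketch rigorous: if you start the trace at a non-integral $W$-element in the $t^{-1}\sigma$ orientation, no truncation ever occurs, so the obstacle you anticipate does not actually arise.
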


\begin{proof}
By (3), each subchain \(S_i\) is "sandwiched" between a unique incoming arrow of type \(\overset{t^{-1}\sigma}{\longrightarrow}\) and a unique outgoing arrow of type \(\overset{t\sigma}{\longrightarrow}\). These two arrows necessarily connect \(S_i\) to (resp.\ from) the same \(\sigma\)-equivalence class among the \(W_j\) by the uniqueness of the immediate successor.

Now remove all the \(S_i\) from the original cyclic diagram in \(R\), replacing each by a finite concatenation of pure \(\overset{\sigma}{\longrightarrow}\) arrows. The result is a smaller loop whose arrows are only of type \(\overset{\sigma}{\longrightarrow}\) or \(\overset{t^{-1}\sigma}{\longrightarrow}\) ($\overset{t\sigma}{\longrightarrow}$ resp.). Tracing this loop from any element necessarily returns to the starting point, and hence by (4), each such parameter must coincide with its own \(t^{-1}\sigma\)-successor ($t\sigma$-successor resp.), forcing it into \(\mathcal{O}_L\).

Finally, re-inserting each \(S_i\) between its prescribed incoming and outgoing arrows shows that each \(S_i\) contributes exactly one free class in \(t^{-1}\mathcal{O}_L/\mathcal{O}_L\). These yield the independent \(\bar\kappa\)-parameters claimed, completing the proof.
\end{proof}

\subsection*{Proof of Theorem 3.1 in the Case \(n \neq m\)}
\begin{proof}
Write \(C'=(c_{ij})\) for \(1\le i\le n\), \(1\le j\le m\), and extend the indices periodically by  
\[
c_{i,j}=c_{i',j'}
\quad\text{where }i'\equiv i\pmod{n},\;
j'\equiv j\pmod{m}.
\]
Then the admissibility conditions
\[
h_{ij}=(\tau_n\,\sigma(C')-C'\,\tau_{m})_{ij}\;\in\;\calO_L
\quad(\forall(i,j)\neq(n,1)),
\qquad
h_{n1}\;\in\;t\,\calO_L
\]
translate into the relations
\[\begin{aligned}
\sigma(c_{i,j})-c_{i-1,j-1}&\in\calO_L
\quad(1<i\le n,\;1<j\le m),\\
t\,\sigma(c_{1,j})-c_{n,j-1}&\in\calO_L
\quad(1<j\le m),\\
\sigma(c_{i,1})-t\,c_{i-1,m}&\in\calO_L
\quad(1<i\le n),\\
t\,\sigma(c_{1,1})-t\,c_{n,m}&\in t\,\calO_L
\;\Longleftrightarrow\;
\sigma(c_{1,1})-c_{n,m}\in\calO_L.
\end{aligned}\]
Starting from any $c_{ij}$ and for each time we subtract both indexes by $1$ , we then get a cyclic loop:
\[
c_{i,j}\xrightarrow{} c_{i-1,j-1}\xrightarrow{}\cdots
c_{i+1,j+1}\xrightarrow{}c_{i,j},
\]
where the arrows are $\xrightarrow{\sigma}$, $\xrightarrow{t\sigma}$ or $\xrightarrow{t^{-1}\sigma}$. By certain symmetric argument of the following one, without loss of generality, we can assume $n < m$. For each \(1\le i\le n-1\), define the $i$-th diagonal block parameter set
\[
S_i \;=\;\bigl\{\,c_{j,\;m-i+j}\;\bigm|\;1\le j\le i\bigr\},
\]
i.e., the set of the $i$-th ``strict upper off-diagonal'', and for \(n\le i\le m \), define the $i$-th diagonal block parameter set 
\[
W_i \;=\;\bigl\{\,c_{j,\;m-i+j}\;\bigm|\;1\le j\le n\bigr\},
\]
i.e., the set of the $i$-th ``diagonal other than the strict upper off-diagonals''. Then the loop above consists of some specific $W_i$'s and $S_i$'s. A direct inspection shows that for any given loop above, the hypotheses (1)-(3) of Lemma 3.3 hold for the collection \(\{W_i\}\) and \(\{S_i\}\) that appear in the loop with the arrow in hypotheses (2) the $\xrightarrow{t\sigma}$ (the assumpion $n < m$ plays a role here: if instead $n > m$, the arrow in hypotheses (2) is $\xrightarrow{t^{-1}\sigma}$). We have to check the hypothesis (4): There is at least one adjacent pair \((W_i,W_j)\) that admits an arrow of type \(\overset{t\sigma}{\longrightarrow}\) in the loop. Such an arrow can only from $c_{1,j}$ to $c_{n,j-1}$ for some $1<j\leq m$, and $c_{1,j}$, $c_{n,j-1}$ can both be contained in $W_i$'s iff $j \leq m-n+1$. Therefore it suffices to show that the loop contains $c_{1,j}$ for some $1<j\leq m-n+1$. Write $d = \gcd(n,m)$, then it is easy to observe that two parameters $c_{1,j}, c_{1,j'}$ belong to the same loop iff $j\equiv j'\pmod{d}$. Since $m > n$, one has that the list $2, \cdots, m-n+1$ is a full list of representatives in $\mathbb{Z}/d\mathbb{Z}$, so it has nontrivial intersection with any loop above.
\begin{comment}Notice that the length of the loop above is $k(n-k)/\gcd(k, n-k) > k$, while $|S_i| = i < k$ and $|W_i|\leq k$, so the loop contains at least two diagonal sets. Assume the loop does not contain any $S_i$, so we have at least $2$ arrows joining some $W_i$'s, but the only arrow that is ``\(\overset{\sigma}{\longrightarrow}\)" between $W_i$'s is from $c_{1,1}$ to $c_{k,n-k}$, so (4) is satisfied.
Assume the loop contains some $S_i$, choose $S_d$ with $d$ the maximum among those $S_i$
\end{comment}
\\
Therefore by Lemma 3.3, each \(w\in W_i\) lies in \(\calO_L\), and within each \(S_i\) there is a single free class in \(t^{-1}\calO_L/\calO_L\).  Hence the total number of free parameters is  
\(\sum_{i=1}^{n-1}1=n-1\),  
as claimed.  Moreover, we can choose the coordinates $c_{1, m}, \cdots,c_{n-1,m}$ uniformly as the free parameters so that we see $\beta$ is trivial with fiber $\mathbb A^{n-1}$.
\end{proof}
\ \\
Next we establish a discreteness lemma to prove the remaining part of Theorem 3.1.
\begin{lem}[Discreteness on a Cyclic Tree]\label{lem:cyclic-tree}
Let
\[
R=\{r_1,\dots,r_N\}\;\subset\;L
\]
be a finite set whose images in \(L/\calO_L\) form a single cycle under the relations
\(\overset{\sigma}{\longrightarrow}\), \(\overset{t\sigma}{\longrightarrow}\), \(\overset{t^{-1}\sigma}{\longrightarrow}\).  Suppose furthermore that \(R\) admits a decomposition into disjoint subchains
\[
R \;=\;(\bigsqcup_{i=1}^r W_i)\sqcup (\bigsqcup_{i=1}^s S_i)
\]
satisfying the conditions (1)-(3) of Lemma~3.3, and in addition

\begin{enumerate}
  \item[(4')]  Every adjacent pair \(\,(W_i, W_{i+1})\) in the cycle is connected by an arrow of type \(\overset{\sigma}{\longrightarrow}\).
\end{enumerate}

Let \(d = |R|\) be the length of the cycle.  Then for any chosen base-point
\(\,w_0\in W_i\), the iterated congruences around the loop give
\[
\sigma^d(w_0)\;-\;w_0\;\in\;\calO_L.
\]
 Moreover, once \(w_0\) is fixed modulo \(\calO_L\), all other elements of every \(W_i\) are uniquely determined, while each $S_i$ carries a free parameter identified with $(t^{-1}w_0+t^{-1}\calO_L)/\calO_L$.  In particular, the solution space for \(w_0\) is $L^{\sigma^d}/\calO_{L^{\sigma^d}}$, and the total solution set of $R$ is $\bar{\kappa}^{|\{S_i\}|}\times L^{\sigma^d}/\calO_{L^{\sigma^d}}$, which is a finite dimensional affine $\bar{\kappa}$-space times a discrete set.
\end{lem}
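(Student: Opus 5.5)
The plan is to trace the congruences around the cycle exactly as in the proof of the first cyclic-tree lemma. The difference is that condition (4') removes every $t$-twisted arrow between consecutive $W$-blocks. Once the $S_i$ are contracted, the loop therefore consists only of $\overset{\sigma}{\longrightarrow}$ arrows, and it no longer forces integrality; it only imposes a Frobenius-periodicity condition.

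\textbf{Step 1: contract the $S_i$.} By (3), each $S_i$ is entered by a $\overset{t^{-1}\sigma}{\longrightarrow}$ arrow from some $w\in W_j$ and left by a $\overset{t\sigma}{\longrightarrow}$ arrow to some $w'\in W_{j'}$, with only $\overset{\sigma}{\longrightarrow}$ arrows inside $S_i$ by (1). Composing these congruences gives:
\begin{itemize}
\item writing $s_{i,1}=t^{-1}\sigma(w)+u$ with $u\in t^{-1}\calO_L$, the later elements $s_{i,k}$ are determined modulo $\calO_L$ by $u$;
\item the exit arrow yields $\sigma^{r_i+1}(w)\equiv w' \pmod{\calO_L}$, independently of $u$, because multiplying by $t$ kills the $t^{-1}\calO_L/\calO_L$ ambiguity.
\end{itemize}
So each $S_i$ acts as a composite $\sigma$-arrow of length $|S_i|+1$ between $W$-elements, together with a free parameter $\bar u\in t^{-1}\calO_L/\calO_L\cong\bar\kappa$. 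This parameter is an affine coordinate on the coset $(t^{-1}\sigma(w)+t^{-1}\calO_L)/\calO_L$, which matches the statement's description of the $S_i$-parameter.

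\textbf{Step 2: trace the loop from $w_0$.} Going once around the cycle, every step is a plain $\sigma$-congruence, using (1), (4') and Step 1. The cycle visits all $d=|R|$ elements, so the total number of applications of $\sigma$ is $d$. Since $\sigma(\calO_L)=\calO_L$, congruences compose, and we get $\sigma^d(w_0)\equiv w_0\pmod{\calO_L}$. Conversely, given $w_0$ satisfying this, every other $W$-element is defined as the appropriate $\sigma$-power of $w_0$ modulo $\calO_L$. For each $S_i$ we choose $\bar u$ freely, and all congruences then hold by construction. This gives the bijection
\[
\{\text{solutions}\}\;\cong\;\{\bar w_0\in L/\calO_L:\sigma^d(\bar w_0)=\bar w_0\}\times\bar\kappa^{\,|\{S_i\}|}.
\]

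\textbf{Step 3: identify the fixed points.} This is the main obstacle: showing that the $\sigma^d$-fixed points of $L/\calO_L$ are exactly $L^{\sigma^d}/\calO_{L^{\sigma^d}}$. I would filter by $t^{-k}\calO_L/\calO_L$. The successive quotients are $t^{-k}\calO_L/t^{-k+1}\calO_L\cong\bar\kappa$, on which $\sigma^d$ acts as the $q^d$-power Frobenius. Its fixed points are $\mathbb F_{q^d}$, and the map $\sigma^d-1$ is surjective on $\bar\kappa$ by Lang's theorem, i.e.\ Artin--Schreier equations are solvable. An induction on $k$ (using $H^1$-vanishing, equivalently the surjectivity of $\sigma^d-1$ on $\calO_L$ via successive approximation) then shows:
\begin{itemize}
\item every $\sigma^d$-fixed class lifts to an element of $L^{\sigma^d}$;
\item two such lifts differ by an element of $\calO_L\cap L^{\sigma^d}=\calO_{L^{\sigma^d}}$.
\end{itemize}
Equivalently, this is the long exact sequence of $0\to\calO_L\to L\to L/\calO_L\to0$ under $\sigma^d-1$, using that $\sigma^d-1$ is surjective on $\calO_L$. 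The resulting set is discrete, since it is a countable union of finite sets $\mathbb F_{q^d}$-filtered. This gives the stated product of an affine space and a discrete set.
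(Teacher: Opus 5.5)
Your argument is correct and is essentially the paper's: the paper rescales each $s\in S_i$ to $ts$ so that the whole loop becomes a pure $\overset{\sigma}{\longrightarrow}$-cycle of length $d$, which amounts to the same thing as your contraction of each $S_i$ into a composite $\sigma$-arrow of length $|S_i|+1$ carrying one free class in $t^{-1}\calO_L/\calO_L$. The one real addition is your Step~3, where surjectivity of $\sigma^d-1$ on $\calO_L$ (Artin--Schreier plus successive approximation) justifies $(L/\calO_L)^{\sigma^d}=L^{\sigma^d}/\calO_{L^{\sigma^d}}$, an identification the paper states without proof.
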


\begin{proof}
First, replace each \(s_i\in S_i\) by \(\tilde s_i = t\,s_i\).  Under this rescaling every \(\overset{t\sigma}{\longrightarrow}\) or \(\overset{t^{-1}\sigma}{\longrightarrow}\) arrow becomes a plain \(\overset{\sigma}{\longrightarrow}\) arrow, so the entire cycle on
\(\;\widetilde R=\bigl(\bigsqcup W_i\bigr)\sqcup\bigl(\bigsqcup\tilde S_i\bigr)\)
is now a \(\sigma\)-cycle of length \(d\).  Fix a base-point \(w_0\in W_i\); following the unique \(\sigma\)-arrows around the loop yields  
\[
\sigma^d(w_0)\;-\;w_0\;\in\;\calO_L.
\]
Each \(W_i\)-element is then uniquely recovered from \(w_0\) via successive \(\sigma\)-arrows, and each original \(s_i\) is determined by  
\((t^{-1}\tilde s_i+t^{-1}\calO_L)/\calO_L\),  
with \(\tilde s_i\) itself uniquely determined by $w_0 \pmod {\calO_L}$. Again, inside one $S_i$, once a base-point $s_0$ is fixed, the other $s_i$'s are determined by the $\sigma$-arrows. Consequently the full solution set is a product of one copy of the discrete group \(L^{\sigma^d}/\calO_{L^{\sigma^d}}\) (parametrizing \(w_0\)) and one copy of \(\bar{\kappa}\) for each \(S_i\), as claimed.
\end{proof}

\subsection*{Proof of Theorem 3.1 in the Case \(n = m = k\)}
\begin{proof}
Retain the notation and periodic indexing from the \(n\neq m\) case.  Observe that the index range  
\[
1<j\le m-n+1
\;=\;1
\]
is empty, so no \(c_{1,j}\) ever appears and hence the condition (4') in Lemma 3.4 holds. Notice that each loop has length 
\[
\frac{n\,m}{\gcd(n,m)}=\frac{k^2}{k}=k.
\]
There are exactly \(k\) loops: one “pure” diagonal loop \(W_k = \{c_{1,1},c_{2,2},\dots,c_{k,k}\}\) and \(k-1\) “off-diagonal’’ loops each of which traverses a single \(W_i\) and a single $S_j$.  

By Lemma~3.4 each off-diagonal loop yields a one-dimensional affine \(\bar\kappa\)-parameter together with the discrete factor \(L^{\sigma^k}/\calO_{L^{\sigma^k}}\).  The pure diagonal loop, having no \(S_i\), contributes only the discrete quotient \(L^{\sigma^k}/\calO_{L^{\sigma^k}}\).  Altogether there are \(k-1\) nontrivial loops, together with a discrete factor $(L^{\sigma^k}/\calO_{L^{\sigma^k}})^k$, which can be parametrized uniformly by $c_{1,k}, \cdots c_{k,k}$, as required.
\end{proof}

\medskip

We compute some easy examples below to help readers understand the above argument better.

\begin{ex}\label{ex:GL2}
  %(\(\GL_2\)-case.)
  Take \(n=1\), \(m=1\), so 
  \(G=\GL_2\), \(M=\GL_1\times\GL_1\).  Fix
  \[
    b_1=t,\quad b_2=1,\quad \mu=(1,0).
  \]
  Then \(A,B\in L^\times\) satisfy \(A^{-1}t\sigma(A)\in K\,tK\), \(B^{-1}\sigma(B)\in K\). Recall that $X^{\GL_1}_{(1)}(1) = F/\calO_F \cong \mathbb{Z}$, thus we may take $A = t^{r_1}, B = t^{r_2}$. Then the parameter \(C\in L\) is subject to
  \[
    %b_1\,\sigma(C)-C\,b_2
    %\;=\;
    t\,\sigma(C)-C
    \;\in\;
    A\,\calO_L.
  \]
  Write \(v(-)\) for the valuation on \(L\).  Since
  \(\;v\bigl(t\,\sigma(C)-C\bigr)=\min\{v(C)+1,\;v(C)\}=v(C),\)
  the condition
  \(t\,\sigma(C)-C\in A\calO_L\)
  is equivalent to
  \[
    v(C)\;\ge\;v(A).
  \]
  Hence \(C\in A\calO_L\), and modulo the equivalence \(C\sim C'\) when \(C-C'\in A\calO_L\), the fiber is
  \[
    A\calO_L / A\calO_L
    \;\cong\;
    \{0\},
  \]
  a single point.  In particular
  \(\dim\beta^{-1}(A\oplus B)=0\).
\end{ex}
\bigskip

\begin{ex} Take \(G=\GL_3\), \(n=2, m=1\).
  Let 
  \[
    b_1=\begin{pmatrix}0&1\\t&0\end{pmatrix}\in\GL_2(L),
    \quad
    b_2=t\in L^\times,
    \quad
    \mu=(1,1,0).
  \]
  Any element of the fiber above \((A\oplus B)\bmod K\in X^M_{\mu_M}(b)\) may be written as
  \[
    g \;=\;
    \begin{pmatrix}A & C\\[4pt]0 & B\end{pmatrix}
    \;=\;
    (A\;\oplus\;B)\;\ltimes\;C',
    \qquad
    C' := A^{-1}C\;\in\;L^{2\times1}.
  \]
  Set
  \[
    M_1 \;=\; A^{-1}\,b_1\,\sigma(A)\;\in\; K\begin{pmatrix}t & 0\\[4pt]0 & 1\end{pmatrix}K,
    \qquad
    M_2 \;=\; B^{-1}\,b_2\,\sigma(B)\;\in\;KtK.
  \]
Notice that 
\[
  X^{\GL_2}_{(1,0)}(b_1)
  \;=\;
  X_{b_1}(b_1)
  \;=\;
  J_{b_1}/\bigl(J_{b_1}\cap K\bigr)
\]
is itself a basic locus, so we may take \(M_1=b_1\) and \(M_2=t\). \\
  \ \\
  Then the congruence condition is rewritten as: \[
C'_1 \sim C'_2
\quad\Longleftrightarrow\quad
C'_1 - C'_2 \;\in\;\mathcal{O}_L^{\,2\times1}.\] and the admissibility condition
  \(\;g^{-1}b\,\sigma(g)\in K\,t^\mu\,K\)
  is equivalent to the single matrix-equation
  \[
    M_1\,\sigma(C') \;-\; C'\,M_2 = h = (h_{ij})
    \;\in\;(\calO_L)_{11} \oplus (t\calO_L)_{21}.
  \]

 Writing 
\(\;C'=(c_1,c_2)^T\), the admissibility condition
\[
  M_1\,\sigma(C') \;-\; C'\,M_2
  \;=\;
  b_1\,\sigma(C') \;-\; t\,C'
  \;=\;
  \begin{pmatrix}
    \sigma(c_2)-t\,c_1 \\[4pt]
    t\bigl(\sigma(c_1)-c_2\bigr)
  \end{pmatrix}
  \;\in\;(\calO_L)_{11} \oplus (t\calO_L)_{21}
\]
implies
\(c_1\in \calO_L\) and \(c_2\in\calO_L\). Therefore the dimension of $\beta^{-1}((A\oplus B)\bmod K)$ is $0$. \\
  
\end{ex}
\bigskip
\begin{ex}[recover a known basic example] Take \(G=\GL_2\), \(n=m=1\).
  Let 
  \[
    b_1=b_2 =1,
    \quad
    \mu=(0, 0) = 1.
  \]
  Here one sees $b$ is in fact basic in $G$ but we can still use the off-block analysis. With the same notations as before, it is obvious that \(M_1=1\) and \(M_2=1\), and the admissibility condition
  \(\;g^{-1}b\,\sigma(g)\in K\,t^\mu\,K = K\)
  is
  \[
    \sigma(C') \;-\; C'\
    \;\in\; \calO_L.
  \]

We know that $\{c \in L: \sigma(c)-c \in \calO_L\} = \calO_L+F$, therefore we recover the fact: $$X^{\GL_2}_{1}(\text{Id}) = \GL_2(F)/\GL_2(\calO_F),$$
hence $\beta$ is trivial with fiber isomorphic to $F/\calO_F$, which is a discrete set and has dimension $0$.
  
\end{ex}
\bigskip
\begin{ex}[a non-trivial example]
    Set \(G=\GL_5\), \(n=2, m=3\), with 
\[
b_1=\begin{pmatrix}0&1\\t&0\end{pmatrix},\quad
b_2=\begin{pmatrix}0&1&0\\0&0&1\\t&0&0\end{pmatrix},\quad
\mu=(1,1,0,0,0),
\]

Again we have
\[
  X^{\GL_2}_{(1,0)}(b_1)
  \;=\;
  X_{b_1}(b_1)
  \;=\;
  J_{b_1}/\bigl(J_{b_1}\cap K\bigr) 
   \;\ \ \ \ \;
  X^{\GL_3}_{(1,0,0)}(b_2)
  \;=\;
  X_{b_2}(b_2)
  \;=\;
  J_{b_2}/\bigl(J_{b_2}\cap K\bigr)
\]
are themselves basic loci.\\

Thus we can again take \(M_1=b_1\), \(M_2=b_2\) and write 
\[
g=\begin{pmatrix}A&C\\0&B\end{pmatrix}
=A\oplus B\ltimes C',
\qquad
C'=A^{-1}C\;\in\;L^{2\times3}.
\]
The admissibility condition \(g^{-1}b\,\sigma(g)\in K\,t^\mu K\) becomes
\[
b_1\,\sigma(C')\;-\,C'\,b_2\;\in\;\calO_L^{5} \oplus (t\calO_L)_{21}.
\]
Write $C' = (c_{ij})$, one computes
\[
b_1\,\sigma(C') \;-\; C'\,b_2
=\begin{pmatrix}
\sigma(c_{21})-t\,c_{13} & \sigma(c_{22})-c_{11} & \sigma(c_{23})-c_{12}\\[4pt]
t\,\sigma(c_{11})-t\,c_{23} & t\,\sigma(c_{12})-c_{21} & t\,\sigma(c_{13})-c_{22}
\end{pmatrix}.
\]
The ``loop tracing" argument implies that $c_{13} \in t^{-1}\calO_L$, and the other $c_{ij}$'s lie in $\calO_L$. Therefore the solution space is one-dimensional over $\bar{\kappa}$ with free parameter $c_{13}$. In particular the
map \(\beta\) is a trivial vector bundle over the basic loci $X^{\GL_2}_{(1,0)}(b_1)\times X^{\GL_3}_{(1,0,0)}(b_2)$.

\end{ex}
\ \\
\subsection{The general Drinfeld case}
Next we treat the general Drinfeld case in \(M=\GL_n\times\GL_m\).  Recall that for a superbasic $b$ in $M(L)$, we can write
\[
b=\bigl(\tau_n^{\,d_1},\,\tau_m^{\,d_2}\bigr),
\]
with \(\gcd(d_1,n)=\gcd(d_2,m)=1\).  Since \(\tau_r^r=t\Id_r\), perform the Euclidean divisions
\[
d_1=q_1n+p_1,\quad 0<p_1<n,
\qquad
d_2=q_2m+p_2,\quad 0<p_2<m,
\]
so that
\[
b=\bigl(t^{q_1}\,\tau_n^{\,p_1},\;t^{q_2}\,\tau_m^{\,p_2}\bigr),
\]
still with \(\gcd(p_1,n)=\gcd(p_2,m)=1\). \\
\\
The unique \(M\)-dominant minuscule \(\mu_M\) is then
\[
\mu_M
=\bigl(\underbrace{1+q_1,\dots,1+q_1}_{p_1},q_1,\dots,q_1\bigr)
\;\oplus\;
\bigl(\underbrace{1+q_2,\dots,1+q_2}_{p_2},q_2,\dots,q_2\bigr),
\]
and we take \(\mu\) to be its \(G\)-dominant rearrangement.  In the Drinfeld case, we have $\dim X^M_{\mu_M}(b) = 0$, which forces $p_1 \in  \{1, n-1\}$ and $p_2 \in \{1, m-1\}$. 
\begin{comment}One checks, as in the canonical superbasic case, that admissibility
\[
g^{-1}b\,\sigma(g)\;\in\;K\,t^\mu\,K
\]
forces
\[
h_{n,1}\in t\,\mathcal O_L,
\qquad
h_{ij}\in\mathcal O_L\quad(i,j)\neq(n,1),
\]
where \(h=\tau_n^{\,d_1}\,\sigma(C')-C'\,\tau_m^{\,d_2}\).  The same “two-block” loop-tracing argument then shows \(\beta\) is a trivial affine bundle of the predicted rank.  
\end{comment}
\begin{thm}\label{thm:drinfeld-q1neqq2-zero-fiber}(layered case)
In the setting of Drinfeld case above, assume \(q_1\neq q_2\). Then for every
\((A,B)\bmod K\in X^M_{\mu_M}(b)\) the fiber
\[
\beta^{-1}(A,B)\;\subset\;X^G_{\mu}(b)
\]
is zero-dimensional (indeed, a single \(K\)-coset over \(\bar \kappa\)).
\end{thm}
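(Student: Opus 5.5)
The plan is to reduce the fiber to a Frobenius-twisted lattice equation with \(M_1=b_1\), \(M_2=b_2\), and then show that the only solution modulo \(\calO_L^{\,n\times m}\) is \(C'\equiv 0\).

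\textbf{Normalization.} In the Drinfeld case \(X^M_{\mu_M}(b)\) is a single \(J_b^M\)-orbit, with \(J_b^M=J_{b_1}\times J_{b_2}\), as recalled in Section~2. Left multiplication by \(j=(j_1,j_2)\in J_b^M\subset J_b\) preserves \(X^G_\mu(b)\) and commutes with \(\beta\). It maps the fiber over \((A,B)\) isomorphically to the fiber over \(j(A,B)\). So it suffices to treat the base point \(A=\Id_n\), \(B=\Id_m\), where
\[
M_1=b_1=t^{q_1}\tau_n^{p_1},\qquad M_2=b_2=t^{q_2}\tau_m^{p_2}.
\]
Both of these are monomial. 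Corollary~\ref{cor:block‐monomial} then turns the admissibility condition into
\[
h:=b_1\sigma(C')-C'b_2,\qquad v(h_{ij})\ge \min\{\alpha_i,\beta_j\},
\]
with \(\alpha_i\in\{q_1,q_1+1\}\) the row valuations of \(b_1\) and \(\beta_j\in\{q_2,q_2+1\}\) the column valuations of \(b_2\). I must check that the sorted list \(\gamma_\bullet\) equals \(\mu\); this holds because \(\mu=(\mu_M)_{\mathrm{dom}}\) and the \(\alpha\)'s and \(\beta\)'s are exactly the entries of \(\mu_M\). Swapping the roles of rows and columns (transpose–inverse symmetry) handles the other sign, so I assume \(q_1>q_2\), i.e.\ \(q_1\ge q_2+1\). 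Then \(\min\{\alpha_i,\beta_j\}=\beta_j\) for all \(i,j\), and the lattice is \(\Lambda=\calO_L^{\,n\times m}\,b_2\).

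\textbf{Rewriting as a fixed-point problem.} Right-multiplying by \(b_2^{-1}\), the condition becomes \(C'-\Psi(C')\in\calO_L^{\,n\times m}\), where
\[
\Psi(X):=b_1\sigma(X)b_2^{-1}.
\]
The goal is to show that every such \(C'\) lies in \(\calO_L^{\,n\times m}\). This makes the fiber a single point (the \(K\)-coset of \(g=\Id\)), hence zero-dimensional. Entrywise, \(b_2^{-1}\) permutes columns and multiplies by \(t^{-\beta_j}\), while \(b_1\) permutes rows and multiplies by \(t^{\alpha_i}\). So \(\Psi\) sends an entry of valuation \(v\) to an entry of valuation \(v+\alpha_i-\beta_j\ge v+(q_1-q_2)-1\ge v\).

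\textbf{Main step: a strictly contracting power of \(\Psi\).} I will use the Newton slopes rather than the single-step bound. \(\Psi\) is the Frobenius of the Hom-isocrystal \(\Hom(V_2,V_1)\), where \(V_i\) has slope \(\lambda_i\). Its iterate satisfies
\[
\Psi^{N}(X)=t^{N(\lambda_1-\lambda_2)}\,(\text{unit permutation-type matrix})\,\sigma^N(X)\,(\cdots)
\]
for \(N=\mathrm{lcm}(n,m)\), using \(\tau_r^r=t\Id_r\). Here \(N(\lambda_1-\lambda_2)\ge 1\), since \(\lambda_1-\lambda_2\ge 1-\tfrac{p_2}{m}+\tfrac{p_1}{n}>0\). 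Thus \(\Psi\) preserves \(\calO_L^{\,n\times m}\), and \(\Psi^N\) maps \(t^k\calO_L^{\,n\times m}\) into \(t^{k+1}\calO_L^{\,n\times m}\).

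Now suppose \(C'=\Psi(C')+y\) with \(y\) integral. Iterating gives
\[
C'=\Psi^{N}(C')+\sum_{k<N}\Psi^k(y).
\]
The sum is integral because \(\Psi\) preserves the integral lattice. If \(v:=\min_{ij}v(c_{ij})<0\), then \(\Psi^N(C')\) has all entries of valuation \(\ge v+1\), so \(C'\) would have minimal valuation \(\ge\min(v+1,0)>v\), a contradiction. Hence \(C'\in\calO_L^{\,n\times m}\). Conversely, \(C'=0\) clearly satisfies the condition.

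\textbf{Expected obstacle.} The delicate point is the boundary case \(q_1=q_2+1\). There single steps of \(\Psi\) can preserve valuation (when \(\alpha_i=q_1\) and \(\beta_j=q_2+1\)), so the naive one-step valuation argument fails. One really needs the exact computation of \(\Psi^N\) from the cyclic structure of \(\tau_n^{p_1}\) and \(\tau_m^{p_2}\), together with the positivity of \(\lambda_1-\lambda_2\). Alternatively one can trace the loops of Lemma~\ref{lem:cyclic-tree} and verify that each loop contains at least one strict \(\overset{t\sigma}{\longrightarrow}\) arrow and no \(S_i\)-type segment. I would first verify the formula for \(\Psi^N\) carefully, and also confirm that \(\Psi\) itself preserves \(\calO_L^{\,n\times m}\) using \(\alpha_i\ge q_1\ge q_2+1\ge\beta_j\).
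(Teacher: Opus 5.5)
Your proof is correct. It uses the same underlying mechanism as the paper: compose the congruences around a Frobenius loop to get $\sigma^{N}(c)\equiv t^{K}c$ modulo $\calO_L$ with $K\ge 1$, which forces integrality. The difference is how you establish $K\ge 1$.

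The paper takes $q_1<q_2$, factors out $t^{q_1}$, and writes each entry condition as $\sigma(c_{i,j})\equiv t^{k}c_{i',j'}$ with $k\in\{\delta-1,\delta,\delta+1\}$. It then argues combinatorially, from the positions of the wrap entries of $\tau_n^{p_1}$ and $\tau_m^{p_2}$, that no loop consists only of $k=0$ edges.

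You compute the composite globally instead. Since $b_1,b_2$ have entries in $F$, you get $\Psi^N(X)=b_1^N\sigma^N(X)b_2^{-N}$. Then $\tau_r^r=t\,\Id_r$ gives exactly $\Psi^N=t^{N(\lambda_1-\lambda_2)}\sigma^N$, with no permutation factor at all. So the total exponent around every loop is $N(\lambda_1-\lambda_2)\ge 1$.

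This has three advantages. It replaces the least precise step of the paper's argument (the claim that a loop must exit the wrap subsets) with a one-line slope computation. It is the same topological-nilpotence idea the paper later uses for the nonzero-slope affine quotient theorem. And it already settles the boundary case $q_1=q_2+1$ that you flagged as the expected obstacle, so no separate loop tracing is needed. Your reduction to the base point $(\Id_n,\Id_m)$, via $M(L)$-equivariance of $\beta$ and transitivity of $J^M_b$, also justifies $M_i=b_i$ more cleanly than the paper does.

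One correction concerns the sign reduction. The group involution $g\mapsto {}^tg^{-1}$, conjugated by the antidiagonal matrix so that it preserves $P$, sends the block slopes $(\lambda_1,\lambda_2)$ to $(-\lambda_2,-\lambda_1)$. It therefore does not change the sign of first-minus-second slope, so it cannot be the symmetry you invoke.

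Transposing the off-block equation does flip the sign, but it trades $\sigma$ for $\sigma^{-1}$. The cleanest fix is a direct mirror argument:
\begin{itemize}
\item For $q_1<q_2$ the target lattice is $b_1\calO_L^{n\times m}$, so the condition reads $\sigma(C')-b_1^{-1}C'b_2\in\calO_L^{n\times m}$.
\item The $L$-linear map $X\mapsto b_1^{-1}Xb_2$ preserves integral matrices and commutes with $\sigma$.
\item Iterating gives $\sigma^N(C')-t^{N(\lambda_2-\lambda_1)}C'\in\calO_L^{n\times m}$, which again forces $C'$ to be integral.
\end{itemize}
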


\begin{proof}
In the Drinfeld case setting of $\GL_n$, we can write
\[
M_1=t^{q_1}\tau_n^{p_1},\qquad M_2=t^{q_2}\tau_m^{p_2},
\]
with $p_1 \in  \{1, n-1\}$ and $p_2 \in \{1, m-1\}$.  Without loss of generality assume \(q_1<q_2\) and set \(\delta:=q_2-q_1\ge 1\).  As in Section 2, put \(C'=A^{-1}C\) and
\[
h \;=\; M_1\,\sigma(C')-C'\,M_2.
\]
By Corollary~\ref{cor:block‐monomial} together with the Drinfeld (minuscule) shape of \(\mu_M\), the admissibility condition \(g^{-1}b\,\sigma(g)\in K\,t^\mu K\) forces all entries of \(h\) to have valuation \(\ge q_1\), and the entries with minimal possible valuation are exactly those lying in \(t^{q_1}\calO_L\).  Factoring out \(t^{q_1}\) we obtain the normalized congruence system
\begin{equation}\label{eq:normalized}
\widetilde h\;:=\;\tau_n^{p_1}\,\sigma(C')\;-\;t^{\delta}\,C'\,\tau_m^{p_2}
,
\end{equation}
with some entries lying in \(t\calO_L\) and all others in \(\calO_L\).

Now recall the shape of \(\tau_r^{p}\): it is the permutation matrix for the shift by \(p\) with a factor \(t\) exactly at the wrap positions; more precisely, multiplication on the left by \(\tau_n^{p_1}\) sends an entry of \(C'\) either to a \emph{unit} multiple (coefficient \(1\)) of a \(\sigma\)-shifted entry or to a \emph{uniformizer} multiple (coefficient \(t\)) of a \(\sigma\)-shifted entry, depending on whether the row-shift wraps. Likewise, multiplication on the right by \(\tau_m^{p_2}\) sends an entry either to a unit multiple or to a \(t\)-multiple, depending on whether the column-shift wraps.

Consequently, each scalar congruence extracted from \eqref{eq:normalized} has the form
\begin{equation}\label{eq:edge}
u\,\sigma(c_{i,j})\;-\;t^\delta\,v\,c_{i',j'}\;\in\; t^{\mathrm{val}(u)}\calO_L,
\qquad u,v\in\{1,t\},
\end{equation}
linking a pair of entries \((i,j)\mapsto(i',j')\) determined by the shifts \(p_1\) and \(p_2\).  Rewriting \eqref{eq:edge} gives
\[
\sigma(c_{i,j}) \;\equiv\; t^{k}\,c_{i',j'} \pmod{\calO_L},
\qquad
k=\delta+v_t-u_t\in\{\delta-1,\delta,\delta+1\},
\]
where \(u_t=1\) if \(u=t\) and \(0\) otherwise (similarly for \(v_t\)).  Since \(\delta\ge1\), we always have \(k\ge0\); moreover \(k=0\) can occur only when \(\delta=1\) and simultaneously \(u=t, v=1\).  In all other cases \(k\ge1\).

View the index set \(\{(i,j)\}\) as vertices of a directed graph whose edges are the relations \((i,j)\to(i',j')\) furnished by \eqref{eq:edge}.  Because the loop advances by \((-p_1,-p_2)\) modulo \((n,m)\), and $p_1 = 1, n-1$ and $p_2 = 1, m-1$, we have $i' = i \pm1$, $j' = j\pm 1$. Along any loop, compose the congruences to obtain
\begin{equation}\label{eq:loop}
\sigma^{r}(c)\;\equiv\;t^{K}\,c\pmod{\calO_L},
\end{equation}
for some length \(r\ge1\) and integer \(K=\sum k_\ell\) equal to the sum of the exponents \(k\) along the loop.  By the discussion above, every edge satisfies \(k_\ell\ge0\), and \(k_\ell=0\) can only happen when \(\delta=1\) and $\sigma(c_{i,j})$ is shifted by a uniformizer multiple while $c_{i',j'}$ is shifted by a unit multiple at that step.  It is impossible for \emph{every} step in a loop to satisfy this simultaneous wrap condition: indeed, the left wrap occurs precisely on a subset of \(p_1\) consecutive rows, and the right wrap on a subset of \(p_2\) consecutive columns; as the loop advances by either plus or minus one index modulo \((n,m)\), it must exit at least one of these wrap subsets.  Hence every loop contains at least one edge with \(k_\ell>0\), so \(K\ge1\).

From \eqref{eq:loop} we deduce \(t^{K}\,c-\sigma^{r}(c)\in\calO_L\) with \(K\ge1\), which forces \(c\in\calO_L\) (apply valuations and use that \(\sigma\) preserves \(v\)).  Since this holds for the representative \(c\) of each \(\sigma\)-orbit of entries in every loop, all entries of \(C'\) lie in \(\calO_L\).  By the coset identification \(C'_1\sim C'_2\iff C'_1-C'_2\in\calO_L^{n\times m}\), it follows that all admissible \(C'\) represent the same coset as \(0\), and hence the fiber \(\beta^{-1}(A,B)\) consists of a single \(K\)-coset, and hence \(\dim_{\bar\kappa}\beta^{-1}(A,B)=0\).
\end{proof}
\medskip
Therefore, the only potentially nontrivial situation for a \(2\)-block Levi \(M\) arises when \(q_1=q_2\). In this case the pair \((b,\mu)\) differs from \((\tau_n^{p_1}\oplus\tau_m^{p_2},\,\mu')\) by the central twist \(t^{q_1}\), so by dividing out this scalar we reduce to the case \(q_1=q_2=0\) without changing the fiber structure. The canonical superbasic case \(p_1=p_2=1\) has already been treated; thus it remains to analyze the complementary Drinfeld configurations
\[
(p_1,p_2)\in\{(n-1,m-1),\ (1,m-1),\ (n-1,1)\}.
\]
By symmetry between the two blocks, it suffices to handle the mixed cases \((1,m-1)\) and \((n-1,m-1)\); the remaining mixed case follows by swapping the roles of \(n\) and \(m\). We now turn to these three residual cases.

\subsubsection{The case \(p_1=n-1,\;p_2=m-1\)}
Write
\[
\tau_n^{\,n-1}
=t\cdot\tau_n^{-1}
=\begin{pmatrix}
0&0&\cdots&0&1\\
t&0&\cdots&0&0\\
0&t&\ddots&\vdots&\vdots\\
\vdots&\ddots&\ddots&0&0\\
0&\cdots&0&t&0
\end{pmatrix},
\qquad
\tau_m^{\,m-1}
=t\cdot\tau_m^{-1}
=\begin{pmatrix}
0&0&\cdots&0&1\\
t&0&\cdots&0&0\\
0&t&\ddots&\vdots&\vdots\\
\vdots&\ddots&\ddots&0&0\\
0&\cdots&0&t&0
\end{pmatrix}.
\]
Thus \(\tau_r^{\,r-1}\) is the “reverse cyclic shift’’: it shifts indices by \(-1\) with a factor \(t\) on the subdiagonal and a unit at the \((1,r)\)-entry.  The admissibility equation in this case reads
\[
h\;=\;\tau_n^{\,n-1}\,\sigma(C')\;-\;C'\,\tau_m^{\,m-1},
\]
and by Corollary~\ref{cor:block‐monomial} (applied with the monomial factors \(\tau_n^{\,n-1}\) and \(\tau_m^{\,m-1}\)) entrywise one has
\[
\begin{aligned}
h_{i,j}&=t\,\sigma(c_{i-1,j})-t\,c_{i,j+1} \in t\calO_L
&& (1<i\le n,\;1\le j<m),\\
h_{1,j}&=\sigma(c_{n,j})-t\,c_{1,j+1} \in \calO_L
&& (1\le j<m),\\
h_{i,m}&=t\,\sigma(c_{i-1,m})-c_{i,1} \in \calO_L
&& (1<i\le n),\\
h_{1,m}&=\sigma(c_{n,m})-c_{1,1}\,\in \calO_L.
\end{aligned}
\]
The integrality pattern is obtained from the canonical case \(p_1=p_2=1\) 
with the direction of the arrows reversed as well as the roles of the arrows of type \(t\sigma\) / \(t^{-1}\sigma\) interchanged.  Hence all loop-tracing and congruence arguments from the case \(p_1=p_2=1\) apply essentially the same way to \(C'\) here as before.  In particular, the fiber description, the number of free parameters, and the (local) triviality of \(\beta\) in this \((n-1,m-1)\) configuration agree with the canonical case, up to the above interchange.
\begin{thm}\label{thm:two-block triviality dual case}
Fix integers $n, m \geq 1$. Let \(G=\GL_{n+m}\), \(M=\GL_n\times\GL_{m}\), and set
\[
b=\diag\bigl(\tau^{n-1}_n,\tau^{m-1}_{m}\bigr),
\quad
\mu=(1,1,\dots,1,0,0).
\]
Then for every point
\((A\oplus B)\bmod K\in X^M_{\mu_M}(b)\), the fiber of
\(\beta\colon X^G_\mu(b)\to X^M_{\mu_M}(b)\) has dimension
\[
\dim_{\bar \kappa}\,\beta^{-1}(A\oplus B)
\;=\;
\min\{n,m\}-1,
\]
in exact agreement with the Rapoport dimension formula
\[
\dim(X^G_\mu(b)) = \bigl\langle\rho,\mu-\bar v_b\bigr\rangle
\;-\;\tfrac12\,\mathrm{def}_G(b).
\]
Moreover, as varieties over \(\bar \kappa\), $\beta$ is a trivial bundle with fiber
\[
\begin{cases}
\displaystyle
\mathbb A^{\,\min\{n,m\}-1}
&\text{if \(n \neq m\),}\\[1ex]
\displaystyle
\mathbb A^{\,k-1}\;\times\;
\Bigl(L^{\sigma^k}/\calO_{L^{\sigma^k}}\Bigr)^k
&\text{if \(n = m = k\).}
\end{cases}
\]
\end{thm}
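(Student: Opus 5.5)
The most economical route is to deduce the statement from Theorem~\ref{thm:two-block triviality} by a duality on $\GL_{n+m}$, and to use the explicit congruences displayed above only as a consistency check. Let $w_0$ be the antidiagonal permutation matrix and consider the automorphism
\[
\theta(g)=w_0\,(g^{T})^{-1}\,w_0 .
\]
It commutes with $\sigma$ and preserves $K$ and the upper triangular Borel. It sends the standard parabolic with Levi $\GL_n\times\GL_m$ to the standard parabolic with Levi $\GL_m\times\GL_n$, swapping the blocks and applying $\theta$ on each block. On the double cosets one has $\theta(Kt^\lambda K)=Kt^{-w_0\lambda}K$.

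I would first verify that $\theta_r(\tau_r)$ is $\sigma$-conjugate by an element of $K$ (in fact by a permutation matrix, possibly with trivial correction) to $\tau_r^{-1}$. Since $\tau_r^{r-1}=t\,\tau_r^{-1}$, the map
\[
gK\longmapsto \theta(g)K
\]
then identifies $X^G_\mu(b)$ with $X^G_{\mu'}(b')$. Here $b'=t^{-1}\,\mathrm{diag}(\tau_m^{r'},\tau_n^{r''})$ is, after this $K$-conjugation, equal to $t^{-1}\mathrm{diag}(\tau_m,\tau_n)$, and $\mu'=-w_0\mu$. Multiplying $b'$ by the central element $t$ shifts $\mu'$ by $(1,\dots,1)$, turning
\[
-w_0(1,\dots,1,0,0)=(0,0,-1,\dots,-1)
\]
into $(1,1,0,\dots,0)$. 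Because $\theta$ respects the Iwasawa decomposition relative to $P$ and the Levi projection, these identifications commute with the reduction maps $\beta$. They therefore carry the fibration of the statement isomorphically to the canonical one for $\mathrm{diag}(\tau_m,\tau_n)$. Theorem~\ref{thm:two-block triviality}, applied with $n$ and $m$ interchanged, then gives global triviality, the fiber $\mathbb A^{\min\{n,m\}-1}$ when $n\neq m$, and $\mathbb A^{k-1}\times(L^{\sigma^k}/\calO_{L^{\sigma^k}})^k$ when $n=m=k$.

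As an independent check (and as a fallback if the $\sigma$-conjugation in the first step introduces an awkward unit), I would run the loop-tracing argument directly. From Corollary~\ref{cor:block‐monomial}, rows $i\ge 2$ of $\tau_n^{n-1}$ and columns $j\le m-1$ of $\tau_m^{m-1}$ have valuation $1$, so the bound $\min\{\alpha_i,\beta_j\}$ equals $1$ exactly off the first row and last column. This is why the displayed conditions hold. After dividing the first line by $t$, they become arrows along the index shift $(i,j)\mapsto(i+1,j+1)$:
\begin{itemize}
\item $c_{i-1,j}\xrightarrow{\sigma}c_{i,j+1}$ in the interior;
\item a $t^{-1}\sigma$-arrow $c_{n,j}\to c_{1,j+1}$;
\item a $t\sigma$-arrow $c_{i-1,m}\to c_{i,1}$;
\item a $\sigma$-arrow $c_{n,m}\to c_{1,1}$.
\end{itemize}
This is the mirror image of the canonical pattern. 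I would define the diagonals $W_i$ and $S_i$ by reflecting the canonical ones through $(i,j)\mapsto(n+1-i,m+1-j)$. Then I would check hypotheses (1)--(3) of Lemma~\ref{lem:cyclic-tree} and, using the same $\gcd(n,m)$ residue count, hypothesis (4) when $n\ne m$ or (4') when $n=m$. This yields the same parameter count, with free coordinates chosen uniformly in the reflected positions (the first column instead of the last), which gives global triviality. Finally, the dimension is compared with the Rapoport formula by computing $\langle\rho,\mu-\nu_b\rangle-\tfrac12\operatorname{def}_G(b)$, with slopes $(n-1)/n$ and $(m-1)/m$. This is the same as the canonical computation after the central shift by $1$ and the substitution of slopes $\lambda\mapsto 1-\lambda$.

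The main obstacle is the bookkeeping in the first step. One must pin down exactly which $K$-element $\sigma$-conjugates $\theta(\tau_r)$ to $\tau_r^{-1}$, and check that it is block-diagonal, so that it moves the base points of $X^M_{\mu_M}$ but does not disturb the Iwasawa coordinates. If a clean choice is unavailable, I would rely entirely on the direct loop-tracing argument, where the only delicate point is verifying condition (4) for the reflected diagonals.
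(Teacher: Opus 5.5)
Your main argument is correct, and it takes a genuinely different route from the paper.

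\textbf{The paper's route.} The paper only sketches this theorem. It writes out the four congruences for $h=\tau_n^{n-1}\sigma(C')-C'\tau_m^{m-1}$ and observes that the arrow pattern is the canonical one with directions reversed and the types $t\sigma$, $t^{-1}\sigma$ interchanged. It then asserts that the loop tracing of Theorem~\ref{thm:two-block triviality} carries over with the \emph{same} chains $S_i$, $W_i$ and the free coordinates $c_{1,m},\dots,c_{\min\{n,m\}-1,m}$.

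\textbf{Your route.} Your duality $\theta$ replaces that assertion by an honest isomorphism of fibrations. It also explains the paper's choice of chains. On the normalized coordinate $C'=A^{-1}C$ one computes that $\theta$ acts by $C'\mapsto -w_{0,m}(C')^{T}w_{0,n}$, since the factors $A$ and $B$ cancel. This anti-transpose carries the strict upper off-diagonals of an $n\times m$ matrix onto those of the $m\times n$ matrix.

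The step you single out as the main obstacle is in fact an identity: $w_0\tau_r^{T}w_0=\tau_r$. Hence $\theta(\tau_r)=\tau_r^{-1}$ and $\theta(\tau_r^{r-1})=t^{-1}\tau_r$ on the nose. So $\theta(b)=t^{-1}\diag(\tau_m,\tau_n)$ with no $K$-correction, and compatibility with $\beta$ follows from $\theta(P)=P'$, $\theta(N)=N'$ and $\theta(K)=K$.

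\textbf{Trade-off.} Your route uses Theorem~\ref{thm:two-block triviality} with the blocks swapped. When $n<m$ this is the case of a larger first block, which the paper itself handles only by a ``symmetric argument''. In return, it makes visible that the dual case contains nothing beyond the canonical one.

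\textbf{The fallback fails as written.} Reflecting through $(i,j)\mapsto(n+1-i,m+1-j)$ reverses the arrows but also exchanges their types. In the dual pattern the row wraps $c_{n,j}\to c_{1,j+1}$ are $t^{-1}\sigma$ and the column wraps $c_{i-1,m}\to c_{i,1}$ are $t\sigma$. The reflections of the canonical row and column wraps, however, carry types $t\sigma$ and $t^{-1}\sigma$ respectively. Consequently your reflected chains, on the lower-left off-diagonals, are entered by a $t\sigma$-arrow and left by a $t^{-1}\sigma$-arrow. This violates hypothesis (3) of the integrality lemma, and the first-column entries carry no $\bar\kappa$-parameter at all.

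For $n=2$, $m=3$ the unique loop is
\[
c_{11}\xrightarrow{\sigma}c_{22}\xrightarrow{t^{-1}\sigma}c_{13}\xrightarrow{t\sigma}c_{21}\xrightarrow{t^{-1}\sigma}c_{12}\xrightarrow{\sigma}c_{23}\xrightarrow{\sigma}c_{11}.
\]
Solving it gives $c_{11},c_{22},c_{21},c_{12},c_{23}\in\calO_L$, while $c_{13}\in t^{-1}\calO_L$ is the free coordinate. The correct direct argument keeps $S_i=\{c_{j,m-i+j}:1\le j\le i\}$ unreflected. Each $S_i$ is now traversed from $c_{1,m-i+1}$ to $c_{i,m}$, entered by a $t^{-1}\sigma$ row wrap and left by a $t\sigma$ column wrap. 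This is exactly what the paper states and what your $\theta$ produces.
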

\begin{proof}[Sketch of proof]
This is the ``dual'' version of Theorem~\ref{thm:two-block triviality}.  Write
\[
h\;=\;\tau_n^{\,n-1}\,\sigma(C')\;-\;C'\,\tau_m^{\,m-1}.
\]
As noted in the preceding subsubsection, the integrality pattern for \(h\) is obtained from the canonical case \(p_1=p_2=1\) 
with the direction of the arrows reversed as well as the roles of the arrows of type \(t\sigma\) / \(t^{-1}\sigma\) interchanged.  Consequently, all loop-tracing and
congruence arguments from Theorem~\ref{thm:two-block triviality} apply verbatim to \(C'\) as well.

Define the diagonal ``chains'' \(S_i\) and \(W_i\) exactly as in the canonical case:
\(S_i\) consists of the strictly upper off-diagonals of height \(i\)
(\(1\le i\le \min\{n,m\}-1\)), while \(W_i\) consists of the remaining diagonals
(including the possible main diagonal).  The coset-congruence \(C'_1\sim C'_2\iff
C'_1-C'_2\in\calO_L^{n\times m}\) and the admissibility bounds for \(h\) imply that
each \(W_i\) contributes no free \(\bar\kappa\)-parameter, whereas each \(S_i\) contributes exactly one.  Thus the number of free parameters is
\(\sum_{i=1}^{\min\{n,m\}-1}1=\min\{n,m\}-1\), realized explicitly, e.g., by the entries on the last column of the strictly upper off-diagonals
\[
c_{1,m},\ c_{2,m},\ \dots,\ c_{\,\min\{n,m\}-1,\;m}.
\]
The fiber description then reduces exactly the same way as in Theorem~\ref{thm:two-block triviality}.
\end{proof}
\subsubsection{The case \(p_1=1,\;p_2=m-1\)}
In this mixed Drinfeld configuration we have
\[
h\;=\;\tau_n\,\sigma(C')\;-\;C'\,\tau_m^{\,m-1}.
\]
Writing \(C'=(c_{ij})_{1\le i\le n,\;1\le j\le m}\), and using that
\(\tau_n\) shifts rows up by \(+1\) with a factor \(t\) on wrap
(\((n,1)\)-entry), while \(\tau_m^{\,m-1}\) shifts columns down by \(-1\) with
a factor \(t\) on the subdiagonal and a unit at \((1,m)\), one checks the
entrywise formulas
\[
\begin{aligned}
h_{i,j}&=\sigma\!\bigl(c_{i+1,j}\bigr)-t\,c_{i,j+1} \in \calO_L
&& (1\le i<n,\ 1\le j<m),\\
h_{n,j}&=t\,\sigma\!\bigl(c_{1,j}\bigr)-t\,c_{n,j+1} \in t\calO_L
&& (1\le j<m),\\
h_{i,m}&=\sigma\!\bigl(c_{i+1,m}\bigr)-c_{i,1} \in \calO_L
&& (1\le i<n),\\
h_{n,m}&=t\,\sigma\!\bigl(c_{1,m}\bigr)-c_{n,1} \in \calO_L.
\end{aligned}
\]
By inspecting the congruence relations above, the only \(t\sigma\)-edge in the
index diagram arises from
\(
h_{n,m}=t\,\sigma(c_{1,m})-c_{n,1}
\),
that is,
\[
c_{1,m}\ \xrightarrow{\,t\sigma\,}\ c_{n,1}.
\]
All other edges are of type \(\sigma\) or \(t^{-1}\sigma\). Consequently, except the degenerate cases that we list in the following theorem,
loop-tracing forces integrality to propagate through every cycle, and the only
free parameter occurs at the entry \(c_{1,m}\) (modulo \(\calO_L\)). Hence
\(c_{1,m}\) is the unique free variable, and the fiber
\(\beta^{-1}(A\oplus B)\) is an affine line over \(\bar\kappa\); in particular,
\(\dim_{\bar\kappa}\beta^{-1}(A\oplus B)=1\).
\begin{thm}\label{thm:two-block triviality mixed case}
Let \(n,m\ge 1\), \(G=\GL_{n+m}\), \(M=\GL_n\times\GL_m\), and consider the mixed Drinfeld configuration
\[
b=\diag\bigl(\tau_n,\ \tau_m^{\,m-1}\bigr),
\qquad
\mu=\bigl(\underbrace{1,\dots,1}_{m},0,\dots,0\bigr),
\]
with the analogous statement holding after swapping the two blocks
\(\bigl(\tau_n^{\,n-1},\,\tau_m\bigr)\).
Except for the degenerate case \(n=m=2\) (which reduces to the canonical case
\(p_1=p_2=1\)), the following holds: for every point
\((A\oplus B)\bmod K\in X^M_{\mu_M}(b)\), the fiber of
\(\beta\colon X^G_\mu(b)\to X^M_{\mu_M}(b)\) is an affine line over \(\bar\kappa\),
hence
\[
\dim_{\bar\kappa}\,\beta^{-1}(A\oplus B)=1,
\]
and \(\beta\) is (Zariski) a trivial \(\mathbb A^1\)-bundle over its image.
\end{thm}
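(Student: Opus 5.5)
The plan is to avoid tracing loops entry by entry, and instead to view the off-block coordinate \(C'\in L^{n\times m}\) as a vector in a Hom-isocrystal. Then I would apply the nonzero-slope criterion of Theorem~\ref{thm:intro-affine-quotient}, and use the explicit congruences listed just before the statement only to compute the dimension.

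\emph{Step 1: set up the Hom-isocrystal and its slope.} Put \(V=L^{n\times m}\) with the \(\sigma\)-semilinear operator
\[
\Phi(X)=\tau_n\,\sigma(X)\,(\tau_m^{\,m-1})^{-1}.
\]
The admissibility condition \(h=\tau_n\sigma(C')-C'\tau_m^{\,m-1}\in\Lambda\) is equivalent to \((\Phi-\mathrm{id})(C')\in\Lambda\,(\tau_m^{\,m-1})^{-1}=:\Lambda'\). Here \(\Lambda\) is the lattice read off from Corollary~\ref{cor:block‐monomial}: \(t\calO_L\) on the bottom row except the \((n,m)\) entry, and \(\calO_L\) elsewhere. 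The coset relation says \(C'\) is taken modulo \(\Lambda_0=\calO_L^{n\times m}\).

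Since \((V,\Phi)\cong V_1\otimes V_2^\vee\), where \(V_1=(L^n,\tau_n\sigma)\) has slope \(1/n\) and \(V_2=(L^m,\tau_m^{m-1}\sigma)\) has slope \((m-1)/m\), the Hom-isocrystal is isoclinic of slope
\[
\lambda=\frac1n-\frac{m-1}{m}.
\]
This vanishes exactly when \(n=m=2\), which is the excluded degenerate case. It decomposes as a direct sum of \(\gcd(n,m)\) simple isocrystals of slope \(\lambda\).

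\emph{Step 2: the fiber is an affine space.} I will check \(f(\Lambda_0)\subset\Lambda'\) for \(f=\Phi-\mathrm{id}\); this is immediate from the entrywise formulas for \(h\), taking \(e=0\). Theorem~\ref{thm:intro-affine-quotient} is stated for simple isocrystals. I would extend it to the isoclinic sum by filtering by simple sub-isocrystals, or by noting that its proof only uses \(\lambda\neq0\). This shows that \(\beta^{-1}(A\oplus B)=f^{-1}(\Lambda')/\Lambda_0\) is an affine space.

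Because every point of \(X^M_{\mu_M}(b)\) is a \(J_b\)-translate of the base point (the Drinfeld case), we may take \(M_1=\tau_n\) and \(M_2=\tau_m^{m-1}\) uniformly. The lattices are then independent of the base point, and the fibration is trivial. This gives global, hence Zariski, triviality.

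\emph{Step 3: the dimension.} I expect this to be the main obstacle, and I would handle it by cases on the sign of \(\lambda\).

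If \(\lambda>0\), which does not happen here except for small cases, the theorem gives \(\dim=\operatorname{length}\Lambda'/f(\Lambda_0)\). This equals the index \([\Lambda':\Lambda_0]\) plus \(\operatorname{length}\Lambda_0/f(\Lambda_0)\), and the latter is \(\operatorname{length}\Lambda_0/\Phi(\Lambda_0)\), i.e. the valuation of \(\det\Phi\).

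If \(\lambda<0\), I would replace \(f\) by \(-\Phi(\Phi^{-1}-\mathrm{id})\) and run the same index count for the positive-slope operator \(\Phi^{-1}\). The count should reduce to comparing the determinant valuation \(v(\det\Phi)=m-n(m-1)\) with the index of \(\Lambda'\), which differs from \(\Lambda_0\) through the twist by \(\tau_m^{m-1}\) and the \(n-1\) entries of \(t\calO_L\) in the bottom row. I expect these contributions to cancel except for a single unit, giving dimension \(1\).

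As a cross-check I would run the loop tracing directly. The only \(t\sigma\)-edge is \(c_{1,m}\to c_{n,1}\), and all other edges are \(\sigma\) or \(t^{-1}\sigma\). This should show that \(c_{1,m}\bmod\calO_L\in t^{-1}\calO_L/\calO_L\) is the unique free coordinate, with all other entries determined by it. That yields the explicit isomorphism of the fiber with \(\mathbb A^1\), compatibly with the uniform choice of \(M_1,M_2\). The delicate point is confirming that no loop avoiding the corner edge leaves a residual discrete factor. This holds because \(\lambda\neq0\), so no \(\sigma^r\)-fixed lattice quotient survives, in contrast with the \(n=m\) case of Theorem~\ref{thm:two-block triviality}.
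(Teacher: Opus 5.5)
Your route differs from the paper's. The paper traces the $\gcd(n,m)$ orbits of $(i,j)\mapsto(i-1,j+1)$ on $\mathbb Z/n\times\mathbb Z/m$, checks that each orbit meets a $t^{-1}\sigma$-edge, and reads off $c_{1,m}$ as the coordinate. Steps 1--2 of your plan are workable. But the content of the statement, that the fiber is a \emph{line}, sits in Step 3, which you only expect to hold, and the bookkeeping you sketch there is wrong.

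The governing fact is the relative-dimension identity quoted in \S7.2. For $\lambda>0$ write $f=-(1-\Phi)$; for $\lambda<0$ write $f=\Phi(1-\Phi^{-1})$. Since $1-\Psi$ preserves relative dimension whenever $\Psi$ is topologically nilpotent, $f$ changes relative dimension by $0$ in the first case and by $-v(\det\Phi)=-\lambda nm$ in the second. Equivalently, $\dim f^{-1}(\Lambda')-\dim\Lambda'=\min(\lambda,0)\cdot nm$.

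Your claim that $\operatorname{length}\Lambda_0/f(\Lambda_0)=v(\det\Phi)$ for $\lambda>0$ is therefore false. The positive-slope cases are exactly $n=1$ or $m=1$. There $\Lambda_0=\calO_L^{n\times m}$ is $\Phi$-stable, so $f(\Lambda_0)=\Lambda_0$, and the fiber dimension is $\dim\Lambda'-\dim\Lambda_0=(n-1)(m-1)=0$. For instance, for $n=1$, $m=2$ the conditions $\sigma(c_1)-c_2\in\calO_L$ and $t\sigma(c_2)-c_1\in\calO_L$ force $C'$ to be integral, so the fiber is a point.

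So the statement is false when $n=1$ or $m=1$; the paper's own remark after the proof gives dimension $0$ there. Your formula would have wrongly confirmed dimension $1$. You must restrict to $n,m\ge 2$ with $(n,m)\ne(2,2)$, which is exactly the range where $\lambda<0$.

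In that range, the count must use the lattice you correctly described in Step 1.
\begin{itemize}
\item Its bottom row has $m-1$ entries in $t\calO_L$, not $n-1$, so $\dim\Lambda-\dim\Lambda_0=-(m-1)$.
\item Right multiplication by $(\tau_m^{m-1})^{-1}=t^{-1}\tau_m$ adds $n(m-1)$, so $\dim\Lambda'-\dim\Lambda_0=(n-1)(m-1)$. With your count of $n-1$ entries you would end with $m-n+1$.
\item Adding $\lambda nm=m-n(m-1)$ gives $\dim\beta^{-1}(A\oplus B)=(n-1)(m-1)+m-n(m-1)=1$.
\end{itemize}
You cannot shortcut this by identifying $f(\Lambda_0)$ with $\Phi(\Lambda_0)$. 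The lattice $\Lambda_0$ is not $\Phi^{-1}$-stable, since the $(1,m)$ entry of $\tau_n^{-1}X\tau_m^{m-1}$ is $t^{-1}X_{n,1}$; only the relative dimensions agree.

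For Step 2, avoid a summand-by-summand reduction. The lattices need not split along the slope decomposition. Also, the number of simple summands is not $\gcd(n,m)$ in general: for $n=3$, $m=6$ the slope is $-1/2$ and there are nine. Instead, note that $\bar f$ is a bijective homomorphism from $f^{-1}(\Lambda')/\Lambda_0$ onto the connected unipotent group $\Lambda'/f(\Lambda_0)$. Hence the reduced fiber is a connected unipotent group of dimension $1$, and so it is $\mathbb A^1$ as a variety.

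With these repairs your argument becomes a complete proof. It obtains the dimension from a uniform index formula rather than from a loop-covering argument.
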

\begin{proof}
Retain the mixed Drinfeld configuration and the entrywise relations
\[
\begin{aligned}
h_{i,j}&=\sigma(c_{i+1,j})-t\,c_{i,j+1}\in\calO_L
&& (1\le i<n,\ 1\le j<m),\\
h_{n,j}&=t\,\sigma(c_{1,j})-t\,c_{n,j+1}\in t\calO_L
&& (1\le j<m),\\
h_{i,m}&=\sigma(c_{i+1,m})-c_{i,1}\in\calO_L
&& (1\le i<n),\\
h_{n,m}&=t\,\sigma(c_{1,m})-c_{n,1}\in\calO_L.
\end{aligned}
\]
Thus every edge in the index diagram is labelled by $\sigma$ or $t^{-1}\sigma$, except for the \emph{unique} $t\sigma$-edge
\[
c_{1,m}\ \xrightarrow{\ t\sigma\ }\ c_{n,1}.
\]
We focus on the $t^{-1}\sigma$-edges:
\[
\boxed{\ c_{i+1,j}\ \xrightarrow{\ t^{-1}\sigma\ }\ c_{i,j+1}\ }\qquad
(1\le i<n,\ 1\le j<m).
\tag{$\ast$}
\]

\medskip
\noindent\emph{Loops on the torus and their number.}
Indices live on the torus $\Bbb Z/n\times\Bbb Z/m$. Every edge moves by
\[
(i,j)\longmapsto (i-1,\ j+1),
\]
so directed cycles are exactly the orbits of the translation by $(-1,+1)$.
Hence the diagram decomposes into $d:=\gcd(n,m)$ disjoint loops; an invariant of a loop is
\[
s(i,j):=i+j\pmod d.
\]
(Indeed $s$ is unchanged by $(-1,+1)$.)

\medskip
\noindent\emph{Covering \emph{all} loops by $t^{-1}\sigma$-targets.}
Consider the two strips of sources for ($\ast$):
\[
V:=\{(i+1,1):\ 1\le i<n\},\qquad
H:=\{(2,j):\ 1\le j<m\}.
\]
These give $t^{-1}\sigma$-edges with targets
\[
( i,2 )\quad(i=1,\dots,n-1)
\qquad\text{and}\qquad
(1, j+1)\quad(j=1,\dots,m-1),
\]
respectively. When $n,m\ge3$, the residues $s(i,2)=i+2$ (for $i=1,\dots,n-1$) and
$s(1,j+1)=j+2$ (for $j=1,\dots,m-1$) form $(n-1)+(m-1)-1=m+n-3$ consecutive
classes modulo $d$. We have
\[
m+n-3\ \ge\ \min\{n,m\}\ \ge\ d,
\]
so \emph{every} residue class $s\in\Bbb Z/d\Bbb Z$ occurs among these targets.
Equivalently, \emph{each loop contains at least one $t^{-1}\sigma$-edge}. Thus the fiber $\beta^{-1}(A\oplus B)$ is an affine line over $\bar\kappa$ with coordinate $c_{1,m}$:
\[
\dim_{\bar\kappa}\,\beta^{-1}(A\oplus B)=1.
\]

\medskip
\noindent\emph{Degenerate exceptions.}
By the trial and error method, the above counting fails only for the case $n=m=2$.  A direct inspection shows that this is exactly the degenerate canonical case singled out in the theorem. This completes the proof.
\end{proof}
\medskip
Notice that the above argument of Drinfeld cases also holds for the exceptional trivial cases when one or both of $m, n$ equal to $1$, which can be thought as the canonical case or the dual case or the layered case, and the dimension of the fiber in this case is always $0$. For integers $n,m\ge1$ and exponents $d_1\in\{1,n-1\}+n\mathbb{Z}$, $d_2\in\{1,m-1\}+m\mathbb{Z}$, we encode the fiber dimension in the Drinfeld case $(\tau_n^{d_1},\tau_m^{d_2})$ above by
\begin{equation}
\DrFibDim{n}{d_1}{m}{d_2}
\;:=\;
\dim_{\bar\kappa}\,\beta^{-1}(A\oplus B)
,
\end{equation}
which is independent of the choice of $(A\oplus B)\in X^M_{\mu_M}(b)$.

\section{Triviality of \(\beta\) for the Drinfeld Case in a 3-Block Levi $M$}
In this section we generalize our two-block Levi analysis to the three-block Levi case and prove the triviality of $\beta$ in this case. This serves as the idea of the general $N$-block Levi case.
Let 
\[
G=\GL_{k_1+k_2+k_3}, 
\quad 
M=\GL_{k_1}\times\GL_{k_2}\times\GL_{k_3},
\quad k_1, k_2, k_3 \geq 1
\]
and choose the superbasic elements 
\(\tau_{k_i}\in\GL_{k_i}(L)\). Set $b_i = \tau^{d_i}_{k_i}$ such that $d_i \equiv \pm 1 \pmod {k_i}$ and $\mu_i$ to be the corresponding minuscule cocharacter of $b_i$ in $\GL_{k_i}$. Write 
\[
b=\diag\bigl(b_1,b_2,b_3\bigr),
\quad\mu=(\mu_1 \oplus \mu_2 \oplus \mu_3)_{\mathrm{dom}}.
\]
Consider next a general block-upper-triangular matrix
\[
g=\begin{pmatrix}
A & D_1 & D_2 \\[4pt]
0 & B   & D_3 \\[4pt]
0 & 0   & C
\end{pmatrix}
\;\in\;G(L),
\]
where \(A\in\GL_{k_1}(L)\), \(B\in\GL_{k_2}(L)\), \(C\in\GL_{k_3}(L)\) and \(D_i\) are the off-diagonal blocks of appropriate sizes. Recall the Iwasawa decomposition in 2.1: $G(L) = M(L)U(L)K =P(L)K$, hence every coset $gK$ admits a representative $(A,B,C,D_1, D_2, D_3)$. Our goal is: given a base point $(A,B,C) \in X^{\GL_{k_1}}_{\mu_1}(b_1)\;\times\;X^{\GL_{k_2}}_{\mu_2}(b_2)\;\times\;X^{\GL_{k_3}}_{\mu_3}(b_3)$, determine exactly which $D_1, D_2, D_3$  lie in the fiber $\beta^{-1}(A,B,C)$.

\subsection*{The coset parametrization of \(g\)}

Set 
\[
N \;=\;
\begin{pmatrix}
0 & A^{-1}D_1 & A^{-1}D_2\\
0 & 0        & B^{-1}D_3\\
0 & 0        & 0
\end{pmatrix},
\qquad
U \;=\;\diag(A,B,C).
\]
Then \(g=U\,(I+N)\) and \((I+N)^{-1}=I - N + N^2\).  We want to use $N$ to parametrize $\beta^{-1}(A,B,C)$. Define $E_1 = A^{-1}D_1, E_2 = A^{-1}D_2, E_3 = B^{-1}D_3$ and write $N = N(E_1, E_2, E_3)$. Assume  $N' = N(E'_1, E'_2, E'_3)$ and $g' = U(I+N')$ is another representative of $g$. A direct calculation gives
\begin{comment}
\[
g^{-1}
=(I-N+N^2)\,U^{-1}
=
\begin{pmatrix}
A^{-1} 
& -\,A^{-1}D_1B^{-1} 
& A^{-1}\bigl(D_1B^{-1}D_3 - D_2\bigr)C^{-1} \\[4pt]
0 
& B^{-1}
& -\,B^{-1}D_3C^{-1} \\[4pt]
0 
& 0 
& C^{-1}
\end{pmatrix}.
\]
\end{comment}
\[
g^{-1}g'
=(I - N + N^2)\,(I + N')
=I \;+\;N' - N - N\,N' + N^2.
\]
Since
\[
N=\begin{pmatrix}
0 & E_1 & E_2\\[3pt]
0 & 0   & E_3\\[3pt]
0 & 0   & 0
\end{pmatrix},
\quad
N'=\begin{pmatrix}
0 & E'_1 & E'_2\\[3pt]
0 & 0    & E'_3\\[3pt]
0 & 0    & 0
\end{pmatrix},
\]
one computes
\[
N\,N'
=\begin{pmatrix}
0 & 0 & E_1E'_3\\[3pt]
0 & 0 & 0\\[3pt]
0 & 0 & 0
\end{pmatrix},
\qquad
N^2
=\begin{pmatrix}
0 & 0 & E_1E_3\\[3pt]
0 & 0 & 0\\[3pt]
0 & 0 & 0
\end{pmatrix}.
\]
Hence
\[
g^{-1}g'
=\begin{pmatrix}
I & 0 & 0\\
0 & I & 0\\
0 & 0 & I
\end{pmatrix}
\;+\;
\begin{pmatrix}
0 & E'_1 - E_1 & E'_2 - E_2\\[3pt]
0 & 0         & E'_3 - E_3\\[3pt]
0 & 0         & 0
\end{pmatrix}
\;-\;
\begin{pmatrix}
0 & 0         & E_1E'_3\\[3pt]
0 & 0         & 0\\[3pt]
0 & 0         & 0
\end{pmatrix}
\;+\;
\begin{pmatrix}
0 & 0         & E_1E_3\\[3pt]
0 & 0         & 0\\[3pt]
0 & 0         & 0
\end{pmatrix}.
\]
Collecting terms gives the final block-matrix expression
\[
g^{-1}g'
=\begin{pmatrix}
I & E'_1 - E_1 & E'_2 - E_2 \;-\;E_1E'_3\;+\;E_1E_3\\[6pt]
0 & I         & E'_3 - E_3\\[3pt]
0 & 0         & I
\end{pmatrix}\in K.
\]
Therefore we get the congruent conditions: \[E'_1 - E_1 \in \calO_L^{k_1 \times k_2}, \quad E'_3 - E_3 \in \calO_L^{k_2 \times k_3}, \quad E'_2 - E_2 \;-\;E_1E'_3\;+\;E_1E_3 \in \calO_L^{k_1 \times k_3}\]

\subsection*{The admissibility condition}

Next we compute
\[
g^{-1}\,b\;\sigma(g)
\;=\;
\bigl(g^{-1}b\bigr)\,\sigma(g).
\]
First,
\[
g^{-1}b
=\begin{pmatrix}
A^{-1}\,b_1 
& -A^{-1}D_1B^{-1}\,b_2 
& A^{-1}(D_1B^{-1}D_3 - D_2)C^{-1}\,b_3 \\[4pt]
0 & B^{-1}\,b_2 
& -B^{-1}D_3C^{-1}\,b_3 \\[4pt]
0 & 0 
& C^{-1}\,b_3
\end{pmatrix}.
\]
Then multiplying on the right by \(\sigma(g)\),
\[
\sigma(g)
=\begin{pmatrix}
\sigma(A) & \sigma(D_1) & \sigma(D_2) \\[4pt]
0 & \sigma(B) & \sigma(D_3) \\[4pt]
0 & 0 & \sigma(C)
\end{pmatrix},
\]
yields the full block matrix
\[
g^{-1}b\,\sigma(g)
=
\begin{pmatrix}
M_{1} & H_{1} & H_{2} \\[4pt]
0      & M_{2} & H_{3} \\[4pt]
0      & 0      & M_{3}
\end{pmatrix},
\]
where
\[
\begin{aligned}
M_{1}
&=A^{-1}\,b_1\,\sigma(A), \\[6pt]
H_{1}
&=A^{-1}\Bigl(b_1\,\sigma(D_1)\;-\;D_1\,B^{-1}\,b_2\,\sigma(B)\Bigr) = M_1\sigma(E_1)-E_1M_2,\\[6pt]
H_{2}
&=A^{-1}\Bigl(b_1\,\sigma(D_2)\;-\;D_1\,B^{-1}\,b_2\,\sigma(D_3)
\;+\;(D_1B^{-1}D_3 - D_2)\,C^{-1}\,b_3\,\sigma(C)\Bigr)\\
&= M_1\sigma(E_2)-E_2M_3-E_1H_3\\[6pt]
M_{2}
&=B^{-1}\,b_2\,\sigma(B),\\[6pt]
H_{3}
&=B^{-1}\Bigl(b_2\,\sigma(D_3)\;-\;D_3\,C^{-1}\,b_3\,\sigma(C)\Bigr) =  M_2\sigma(E_3)-E_3M_3,\\[6pt]
M_{3}
&=C^{-1}\,b_3\,\sigma(C).
\end{aligned}
\]
These explicit formulas are the starting point for checking admissibility \(g^{-1}b\,\sigma(g)\in K\,t^\mu\,K\) and analyzing the resulting congruences on the blocks \(E_1,E_2,E_3\).\\

\subsection{Analysis of \(E_1, E_2, E_3\) for pairwise distinct $(k_1,d_1),(k_2,d_2), (k_3,d_3)$}
\ \\
We first study the case in which the pairs $(k_1,d_1),(k_2,d_2), (k_3,d_3)$ are pairwise distinct. 
Recall that for each block \(\GL_{k_i}\), one has
\[
X^{\GL_{k_i}}_{\mu_i}(b_i)
\;\cong\;
J_{b_i}\big/ \bigl(J_{b_i}\cap K\bigr),
\]
so we may henceforth fix
\[
M_1=b_1,\quad M_2=b_2,\quad M_3=b_3.
\]
In the block-matrix computation
\[
g^{-1}b\,\sigma(g)
=\begin{pmatrix}
M_1 & H_1 & H_2\\[4pt]
0   & M_2 & H_3\\[4pt]
0   & 0   & M_3
\end{pmatrix},
\]
one finds
\[
H_1 \;=\; b_1\,\sigma(E_1)\;-\;E_1\,b_2,
\qquad
H_3 \;=\; b_2\,\sigma(E_3)\;-\;E_3\,b_3,
\]
and
\[
H_2 
= b_1\,\sigma(E_2)\;-\;E_2\,b_3
\;-\;E_1\,H_3.
\]
Applying Corollary~\ref{cor:block‐monomial} to the block matrix  
\(\;g^{-1}b\,\sigma(g)=\begin{pmatrix}b_1&H_1&H_2\\0&b_2&H_3\\0&0&b_3\end{pmatrix}\),
we see that admissibility \(g^{-1}b\,\sigma(g)\in K\,t^\mu\,K\) is equivalent to requiring:
\begin{equation}
(H_1)_{ij} = (b_1\,\sigma(E_1)-E_1\,b_2)_{ij} \in t^{\min(\text{row}_i(b_1), \text{col}_j(b_2))}\calO_L
\begin{comment}\;\in\;
\begin{pmatrix}
\calO_L^{(k_1-1)\times 1} & *\\
t\,\calO_L & \calO_L^{1\times (k_2-1)}
\end{pmatrix},
\end{comment}
\end{equation}

and
\begin{equation}
(H_3)_{ij} = (b_2\,\sigma(E_3)-E_3\,b_3)_{ij} \in t^{\min(\text{row}_i(b_2), \text{col}_j(b_3))}\calO_L
\end{equation}

and 
\begin{equation}
(H_2)_{ij} = (b_2\,\sigma(E_2)-E_2\,b_3-E_1\,H_3)_{ij} \in t^{\min(\text{row}_i(b_1), \text{col}_j(b_3))}\calO_L.
\end{equation}

Conditions (4.1), (4.2) togerther with the coset congruent conditions $E'_1 - E_1 \in \calO_L^{k_1 \times k_2}, \quad E'_3 - E_3 \in \calO_L^{k_2 \times k_3}$ reduce the behaviour of $E_1, E_3$ exactly to the two-block analyses of Section 3. We want to control the ``extra terms" in the congruent conditions and admissibility related to $E_2$, say in the equations \[E'_2 - E_2 \;-\;E_1E'_3\;+\;E_1E_3 \in \calO_L^{k_1 \times k_3}\] and (4.3), we want to make $-E_1\,E'_3+E_1\,E_3$ and $E_1\,H_3$ as $0$, so that we can reduce $E_2$ to the two-block analyses of Section 3 as well. Firstly we fix a full set of representatives (which we will determine later) in $\calO_L^{k_2 \times k_3}$  for all possible choices of $E_3$, and use them to parametrize $E_3$, therefore we can make $-E_1\,E'_3+E_1\,E_3 = E_1(E_3 -E'_3) = 0$.\\
In Section 3, we know that in any Drinfeld setting there are at most $\min(k_1,k_2)-1$ free parameters for $E_1$, and in any case they are distributed among the entries of the strict upper off-diagonals. Therefore we can fix our choices of $E_1$ so that any entry of it other than the off-diagonal ones is $0$. Therefore, to make $E_1\,H_3 = 0$, it suffices to make the entries of $H_3$ other than its first row to be $0$.\\
Write $E_3 = (c_{ij})_{1\leq i \leq k_2, 1\leq j \leq k_3}$. Then since $H_3 \;=\; b_2\,\sigma(E_3)\;-\;E_3\,b_3$, keeping the notations in Section 3, given free parameters in $E_3$, any ``loop'' of $E_3$ can be traced so that at most one entry of $H_3$ that the loop corresponds is nonzero, and we can make those entries to be inside the first row of $H_3$. So we get our desired set of representatives of $E_3$ and desired $H_3$.
We then have the following theorem:\\
\begin{thm}
Let \(G=\GL_{k_1+k_2+k_3}\), \(M=\GL_{k_1}\times\GL_{k_2}\times\GL_{k_3}\), and set $b_i = \tau^{d_i}_{k_i}$ such that $d_i \equiv \pm 1 \pmod {k_i}$ with \((k_1,d_1), (k_2.d_2),(k_3,d_3)\) pairwise distinct. Set $\mu_i$ to be the corresponding minuscule cocharacter of $b_i$ and set
\[
b=\diag\bigl(b_1, b_2, b_3\bigr),
\quad
\mu=(\mu_1 \oplus \mu_2 \oplus \mu_3)_{\mathrm{dom}}.
\]
Then for every point
\((A, B, C)\bmod K\in X^M_{\mu_M}(b)\), the fiber of
\(\beta\colon X^G_\mu(b)\to X^M_{\mu_M}(b)\) has dimension
\[
\dim_{\bar \kappa}\,\beta^{-1}(A,B,C)
\;=\;
\Sigma_{i<j} \DrFibDim{k_i}{d_i}{k_j}{d_j},
\]
in exact agreement with the Rapoport dimension formula
\[
\dim(X^G_\mu(b)) = \bigl\langle\rho,\mu-\bar v_b\bigr\rangle
\;-\;\tfrac12\,\mathrm{def}_G(b).
\]
Moreover, as varieties over \(\bar \kappa\), $\beta$ is a trivial bundle with fiber
$
\mathbb A^{\Sigma_{i<j} \DrFibDim{k_i}{d_i}{k_j}{d_j}}.
$
\end{thm}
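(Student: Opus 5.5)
The plan is to parametrize the fiber $\beta^{-1}(A,B,C)$ by triples $(E_1,E_2,E_3)$ and solve for them in the order $E_3$, $E_1$, $E_2$. Each step should reduce to one of the two-block computations of Section~3, provided the two nonlinear correction terms, $E_1(E_3-E'_3)$ in the coset relation and $E_1H_3$ in condition (4.3), can be removed. Since $X^{\GL_{k_i}}_{\mu_i}(b_i)\cong J_{b_i}/(J_{b_i}\cap K)$, I would first move the base point into normal position. The point is that $M_i$ depends only on the $J_{b_i}$-translate, so I may take $M_i=b_i$ uniformly over the base, and every choice made afterwards is independent of $(A,B,C)$. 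This independence is what produces global triviality.

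\textbf{Step 1: the block $E_3$.} Conditions (4.2) together with $E'_3-E_3\in\calO_L^{k_2\times k_3}$ are exactly the two-block problem for $(\tau_{k_2}^{d_2},\tau_{k_3}^{d_3})$. By Theorems~\ref{thm:two-block triviality}, \ref{thm:drinfeld-q1neqq2-zero-fiber}, \ref{thm:two-block triviality dual case} and \ref{thm:two-block triviality mixed case}, the solutions modulo $\calO_L$ form $\mathbb A^{\mathbf d_{23}}$, where $\mathbf d_{23}:=\DrFibDim{k_2}{d_2}{k_3}{d_3}$. The hypothesis that the pairs are pairwise distinct rules out the discrete factor $(L^{\sigma^k}/\calO_{L^{\sigma^k}})^k$, which only appears when the two blocks coincide. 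I then fix the canonical section: the free coordinates are entries of the form $a\,t^{-1}$ with $a$ a Teichmüller lift, all other entries of each loop are determined by exact $\sigma$-arrows, and the remaining entries are set to $0$. Computing $H_3=b_2\sigma(E_3)-E_3b_3$ along each loop, every congruence holds exactly except at the single closing arrow of the loop. I would choose the loop's starting point so that this unique nonzero entry of $H_3$ lies in the first row. Two distinct representatives then never differ by an element of $\calO_L^{k_2\times k_3}$, so the correction term $E_1(E'_3-E_3)$ vanishes identically once $E_3$ runs over this section.

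\textbf{Step 2: the block $E_1$, and killing $E_1H_3$.} In the same way, $E_1$ is a two-block problem for $(b_1,b_2)$ giving $\mathbb A^{\mathbf d_{12}}$. Its section can be chosen supported on the strict upper off-diagonals $S_i$, and more precisely on the free coordinates singled out in Section~3, which lie in the last column. Then $E_1H_3$ only involves the columns of $E_1$ times the rows of $H_3$. The main obstacle is to check that these supports are disjoint, i.e. that the nonzero columns of $E_1$ (indices near $k_2$) never meet the nonzero rows of $H_3$ (row $1$). If row $1$ of $H_3$ and the last column of $E_1$ conflict, for instance when $k_2=1$ or in the reversed-orientation dual and mixed cases, I would first try moving the free parameters of $E_1$ to the first column of the corresponding $S_i$, or moving the nonzero entry of $H_3$ to the last row, using the freedom in where each loop is closed. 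I would verify this case by case over $d_i\equiv\pm1$.

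\textbf{Step 3: the block $E_2$ and assembly.} Once $E_1H_3=0$ and $E_1(E_3-E'_3)=0$, both (4.3) and the coset relation for $E_2$ are exactly the two-block problem for $(b_1,b_3)$, giving $\mathbb A^{\mathbf d_{13}}$. Here I would also need to check that $E_1H_3$ is not merely integral but actually zero on the entries where a $t$-divisibility is required. The fiber is therefore $\mathbb A^{\mathbf d_{12}}\times\mathbb A^{\mathbf d_{23}}\times\mathbb A^{\mathbf d_{13}}$, with coordinates given by the fixed free entries, independently of the base point, so $\beta$ is a trivial bundle. Finally, the count $\mathbf d_{12}+\mathbf d_{13}+\mathbf d_{23}$ agrees with Rapoport's formula: both $\langle\rho,\mu-\nu_b\rangle$ and $\operatorname{def}_G(b)$ split as sums over pairs of blocks, because $\dim X^M_{\mu_M}(b)=0$. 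This reduces the agreement to the two-block agreement already established in Section~3.
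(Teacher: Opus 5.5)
Your outline follows the paper's own argument step for step. You normalize $M_i=b_i$, run $E_3$ over a fixed section so that $E_1(E_3'-E_3)=0$, choose $E_1$ with zero first column and $H_3$ concentrated in row~$1$ so that $E_1H_3=0$, and then treat $E_2$ as the $(b_1,b_3)$ two-block problem.

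\textbf{Where it fails.} It breaks down at the same step as the paper's. A normalization with one non-exact arrow per loop, placed in row~$1$, is impossible once a loop carries two free parameters: all entries of the loop would then be determined by a single entry of valuation $\ge -1$. In some cases $E_1H_3$ cannot be killed at all.

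\emph{First example.} Take $b_2=\tau_3$, $b_3=\tau_4$. There is one loop, with $\mathbf d_{23}=2$. The parameters are the classes of $c_{14}$ and $c_{24}$ (with $c_{13}\equiv\sigma(c_{24})$), and $c_{12},c_{31}\in\calO_L$ in every solution. Suppose $h_{3,2}=t\sigma(c_{12})-c_{31}$ and $h_{2,1}=\sigma(c_{31})-tc_{24}$ both vanished. Then $c_{31}\in t\calO_L$, hence $c_{24}\in\calO_L$. So whenever $c_{24}\notin\calO_L$, every representative has $H_3$ nonzero in row~$2$ or~$3$.

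\emph{Second example.} Moving supports case by case does not help. Take $(k_1,k_2,k_3)=(3,4,5)$ with $d_i=1$, and write $E_3=(c_{ij})$, $H_3=(h_{ij})$.
\begin{itemize}
\item On the $E_3$ side, $c_{12},c_{41}\in\calO_L$ and $h_{3,1}+\sigma(h_{4,2})=t\sigma^2(c_{12})-tc_{35}$. So if $c_{35}\notin\calO_L$ (the $S_3$-parameter), one of $h_{3,1},h_{4,2}$ is a unit.
\item On the $E_1$ side, $E_1$ has the $3\times 4$ shape of the first example. It is supported on all of $S_i$, not only on its last column. If its $S_2$-parameter is nonzero, then $(E_1)_{1,3}$ and $(E_1)_{2,4}$ have valuation exactly $-1$.
\item Meanwhile $(E_1)_{1,1},(E_1)_{1,2},(E_1)_{2,1},(E_1)_{2,2},(E_1)_{2,3}$ are integral, and $h_{4,1}\in t\calO_L$.
\end{itemize}
Hence $(E_1H_3)_{1,1}$ (if $h_{3,1}$ is a unit) or $(E_1H_3)_{2,2}$ (if $h_{4,2}$ is a unit) has valuation $-1$ for every choice of representatives.

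\textbf{The missing idea.} Absorb $E_1H_3$ instead of killing it. With $E_1$ and $E_3$ running over fixed sections, the $(1,3)$-condition has the form $f_{13}(E_2)-\Xi\in\Lambda_{13}$. Here $f_{13}(X)=b_1\sigma(X)-Xb_3$, and $\Xi$ depends only on $(E_1,E_3)$.
\begin{itemize}
\item Since $\nu_1\neq\nu_3$, $f_{13}$ is bijective. So the admissible $E_2$ modulo $\calO_L^{k_1\times k_3}$ form the translate $f_{13}^{-1}(\Xi)+f_{13}^{-1}(\Lambda_{13})$ of the two-block fiber $\mathbb A^{\mathbf d_{13}}$.
\item The shift is algebraic in the parameters. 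For $\nu_1>\nu_3$ it is, modulo $\calO_L$, a finite truncation of the series $-\sum_{n\ge 0}\Phi^n$ of Section~7; in general it is a morphism of perfect schemes.
\item Then $E_2\mapsto E_2-f_{13}^{-1}(\Xi)$ gives $\beta^{-1}(A,B,C)\cong\mathbb A^{\mathbf d_{12}+\mathbf d_{13}+\mathbf d_{23}}$, uniformly in the base point.
\end{itemize}

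\textbf{Your Step~3 worry is justified.} Condition (4.3) is not the exact three-block Cartan condition. Replacing $E_1$ by $E_1+Z$ with $Z$ integral leaves the coset unchanged but changes $H_2$ by $-ZH_3$. In the canonical case, let $Z$ be the matrix unit at $(k_1,1)$. This alters the entry where $\Lambda_{13}$ demands $t\calO_L$ by $(H_3)_{1,1}=\sigma(c_{21})-tc_{1,k_3}$, which can be a unit. Clearing $H_1$ by row operations with the unit pivots of $b_2$ gives the correct condition $H_2-YH_3\in\Lambda_{13}$, with $Y$ integral and determined by $H_1$. So $\Xi=(E_1+Y)H_3$, and the translation argument applies verbatim.
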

\bigskip
\subsection{The degenerate case where some $(k_i, d_i)$ are equal}
\ \\
Now suppose that 
\[
(k_2,d_2) = (k_3,d_3).
\]
Then the two Levi-blocks of size \(k_2\) share the same basic slope:
\[
M_1 = b_2,
\qquad
M_2 = M_3 = b_2.
\]
As before, write
\[
g=\begin{pmatrix}
A & D_1 & D_2\\[4pt]
0 & B   & D_3\\[4pt]
0 & 0   & C
\end{pmatrix}, 
\quad
U=\diag(A,B,C),
\quad
N=\begin{pmatrix}
0 & E_1 & E_2\\[3pt]
0 & 0   & E_3\\[3pt]
0 & 0   & 0
\end{pmatrix},
\]
so that \(g=U\,(I+N)\).  For another representative \(g'=U\,(I+N')\) with
\(N'=(E'_1,E'_2,E'_3)\), one checks
\[
g^{-1}g'=I+(N'-N)-N\,N'+N^2.
\]
As in the distinct-slope case, this gives the congruences
\[
E'_1 - E_1\in\calO_L^{k_1\times k_1}, 
\quad
E'_3 - E_3\in\calO_L^{k_1\times k_3}, 
\quad
E'_2 - E_2 - E_1E'_3+E_1E_3\in\calO_L^{k_1\times k_3}.
\]

Next, computing
\[
g^{-1}b\,\sigma(g)
=\begin{pmatrix}
b_1 &H_1&H_2\\[4pt]
0&b_2&H_3\\[4pt]
0&0&b_2
\end{pmatrix},
\]
we find
\[
H_1=b_1\,\sigma(E_1)-E_1\,b_2,
\quad
H_3=b_2\,\sigma(E_3)-E_3\,b_2,
\]
\[
H_2=b_1\,\sigma(E_2)-E_2\,b_2-E_1\,H_3.
\]
\begin{comment}By Lemma 2.1 (as in the distinct-slope case), admissibility \(g^{-1}b\,\sigma(g)\in K\,t^\mu\,K\) requires each \(H_i\) to have valuation \(\ge1\) in its lower-left entry and \(\ge0\) elsewhere.\\
\end{comment}
Since one may have \((k_1,d_1)=(k_2, d_2)\), we cannot zeroing out the first column of \(E_1\), but we claim that \(H_3\) can be made identically zero.  Indeed, writing \(E_3=(c_{ij})\) and tracing the cyclic relations
\[
H_3=b_2\,\sigma(E_3)-E_3\,b_2
\]
one finds (by the same “loop-tracing” argument of Section 3) a choice of base-point \(w_0\in L^{\sigma^{k_2}}\) such that each congruence becomes a genuine equality. Indeed, the last entry before we return to $w_0$ is either $t^{-1}\sigma^{k_2-1}w_0$ or $\sigma^{k_2-1}w_0$, and by either the relation $\overset{t\sigma}{\longrightarrow}$ or $\overset{\sigma}{\longrightarrow}$, we get $\sigma^{k_2}w_0 = w_0$. This forces \(H_3=0\).  Once \(E_3\) is thus normalized, the remaining congruences on \(E_1\) and then on \(E_2\) each reduce to the two-block analyses of Section 3.\\
\ \\
Notice that our assumption ``$(k_2,d_2) = (k_3,d_3)$''  does not lose generality since in any cases we can reorder $k_1, k_2, k_3$ in the blocks by ``change of basis'', and thus recovers the same dimension count and coordinates' behaviour. In all cases, \(\beta\) remains a trivial bundle of the predicted rank. We have the following theorem:

\begin{thm}
Let \(G=\GL_{k_1+2k_2}\), \(M=\GL_{k_1}\times\GL_{k_2}\times\GL_{k_2}\), and set $b_i = \tau^{d_i}_{k_i}$ such that $d_i \equiv \pm 1 \pmod {k_i}$. Set $\mu_i$ to be the corresponding minuscule cocharacter of $b_i$ and set
\[
b=\diag\bigl(b_1,b_2,b_2\bigr),
\quad
\mu=(\mu_1 \oplus \mu_2 \oplus \mu_2)_{\mathrm{dom}}.
\]
Then for every point
\((A, B, C)\bmod K\in X^M_{\mu_M}(b)\), the fiber of
\(\beta\colon X^G_\mu(b)\to X^M_{\mu_M}(b)\) has dimension
\[
\dim_{\bar \kappa}\,\beta^{-1}(A,B,C)
\;=\;
2\DrFibDim{k_1}{d_1}{k_2}{d_2}+k_2-1.
\]

Moreover, as varieties over \(\bar \kappa\), $\beta$ is a trivial bundle with fiber

\[
\begin{cases}
\displaystyle
\mathbb A^{2\DrFibDim{k_1}{d_1}{k_2}{d_2}+k_2-1}\;\times\;
\Bigl(L^{\sigma^{k_{2}}}/\calO_{L^{\sigma^{k_{2}}}}\Bigr)^{k_{2}}
&\text{if \((k_{1},d_1) \neq (k_{2},d_2)\),}\\[1ex]
\displaystyle
\mathbb A^{\,3k-3}\;\times\;
\Bigl(L^{\sigma^{k}}/\calO_{L^{\sigma^{k}}}\Bigr)^{3k}
&\text{if \((k_{1},d_1) = (k_{2},d_2) = (k,d)\).}
\end{cases}
\]

\end{thm}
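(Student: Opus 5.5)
The plan is to follow the three-block procedure set up just before the statement: fix the representative $U=\diag(A,B,C)$ with $M_1=b_1$, $M_2=M_3=b_2$ (allowed by the identifications $X^{\GL_{k_i}}_{\mu_i}(b_i)\cong J_{b_i}/(J_{b_i}\cap K)$). We then parametrize the fiber by $(E_1,E_2,E_3)$ subject to three things: the coset congruences $E'_1-E_1\in\calO_L$, $E'_3-E_3\in\calO_L$, $E'_2-E_2-E_1(E'_3-E_3)\in\calO_L$, and the entrywise admissibility bounds on $H_1,H_2,H_3$ coming from Corollary~\ref{cor:block‐monomial}. The strategy is to solve for $E_3$ first, then $E_1$, then $E_2$. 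At each step we fix a section of representatives that kills the coupling terms, so that the fiber becomes a product of three two-block fibers.

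\textbf{Step 1 ($E_3$).} Here $H_3=b_2\sigma(E_3)-E_3b_2$ is exactly the equal-size two-block system of Theorem~\ref{thm:two-block triviality} (or Theorem~\ref{thm:two-block triviality dual case} when $d_2\equiv -1$) with $n=m=k_2$. By the Discreteness Lemma (condition (4') holds), its solution set is $\mathbb A^{k_2-1}\times(L^{\sigma^{k_2}}/\calO_{L^{\sigma^{k_2}}})^{k_2}$. I would choose representatives as the text indicates. Each of the $k_2$ loops has its base point $w_0\in L^{\sigma^{k_2}}$, and the entries are propagated by exact equalities $\sigma(c)=c'$ or $\sigma(c)=tc'$ rather than congruences; the free $t^{-1}\calO_L/\calO_L$ coordinate on each $S_i$ is taken as a Teichmüller-type lift. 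The point to verify is that this choice makes $H_3=0$ identically: going around the loop gives $\sigma^{k_2}w_0=w_0$, which holds by construction. Once $E_3$ is restricted to this section, $E_1(E'_3-E_3)=0$ and $E_1H_3=0$.

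\textbf{Step 2 ($E_1$ and $E_2$).} With $H_3=0$, condition (4.1) on $E_1$ and condition (4.3), which now reads $b_1\sigma(E_2)-E_2b_2\in(\cdot)$, are two independent copies of the two-block problem for $(k_1,d_1;k_2,d_2)$. The coset relations decouple accordingly.
\begin{itemize}
\item If $(k_1,d_1)\neq(k_2,d_2)$, each copy gives $\mathbb A^{\DrFibDim{k_1}{d_1}{k_2}{d_2}}$ by Section~3.
\item If $(k_1,d_1)=(k_2,d_2)=(k,d)$, each copy gives $\mathbb A^{k-1}\times(L^{\sigma^k}/\calO)^k$ by the $n=m$ cases.
\end{itemize}
Multiplying the three factors gives the stated fiber and dimension. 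In the equal case this is $\mathbb A^{3k-3}\times(L^{\sigma^k}/\calO)^{3k}$. Global triviality follows because all representatives are chosen uniformly in the fixed coordinates ($c_{i,k}$ last-column entries), independent of the base point once $M_i=b_i$.

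\textbf{Main obstacle.} The delicate point is Step 1's claim that the section can be chosen so that $H_3$ vanishes \emph{exactly}, and not merely modulo the admissibility lattice. A related check is that restricting $E_3$ to this section loses no cosets; this uses the fact that the third coset congruence is compatible with changing $E_2$ by $E_1(E'_3-E_3)$. I would first try to prove this by writing each loop explicitly as $c_{j}=\sigma^{-j}(w_0)$ times powers of $t$ and checking both assertions directly. If exact vanishing fails on the $S_i$ chains, the fallback is to show that the leftover $E_1H_3$ lies in the lattice $t^{\min}\calO_L$. This should follow because $E_1$ is supported on the strict upper off-diagonals while $H_3$ would be supported in a single row.
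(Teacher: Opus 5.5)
Your outline is the paper's own argument: normalize $E_3$ so that $H_3=0$, then treat $E_1$ and $E_2$ as two independent two-block problems. The step you flag as the main obstacle is exactly where it fails, because at generic points of the fiber no representative with $H_3=0$ exists. Indeed, $H_3=b_2\sigma(E_3)-E_3b_2=0$ says that $E_3$ commutes with $b_2\sigma$. Hence it commutes with $(b_2\sigma)^{k_2}=t^{d_2}\sigma^{k_2}$, so $E_3\in M_{k_2}(L^{\sigma^{k_2}})$. All $t$-adic digits of such entries lie in $\mathbb F_{q^{k_2}}$ (with $q=|\kappa|$), so these representatives reach only the $\mathbb F_{q^{k_2}}$-rational points of the $\mathbb A^{k_2-1}$ factor. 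Concretely, take $b_2=\tau_2$ in equal characteristic. Then $E_3=\left(\begin{smallmatrix}0&t^{-1}x\\ x^q&0\end{smallmatrix}\right)$ is admissible with $H_3=\diag(x^{q^2}-x,0)$. For $\delta\in M_2(\calO_L)$, the equation $H_3+\tau_2\sigma(\delta)-\delta\tau_2=0$ forces $x^{q^2}-x=-t\bigl(\sigma^2(\delta_{12})-\delta_{12}\bigr)\in t\calO_L$, which fails unless $x\in\mathbb F_{q^2}$. In your Step~1 the identity $\sigma^{k_2}w_0=w_0$ is fine for the discrete base point. But the Teichm\"uller coordinate $t^{-1}[x]$ on $S_i$ is not $\sigma^{k_2}$-fixed, so the loop closes only up to a unit that stays in $H_3$.

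The fallback does not repair this. Once $H_3\neq0$, the lower-right $2\times2$ block of $Z=g^{-1}b\sigma(g)$ is not monomial, so Corollary~2.2 no longer computes admissibility and (4.1)--(4.3) are not justified. For $d_1=d_2=1$ the Cartan condition is that $Z$ and $tZ^{-1}$ be integral. The $(1,3)$-block of $tZ^{-1}$ is $tb_1^{-1}(H_1b_2^{-1}H_3-H_2)b_2^{-1}$, so besides $E_1H_3$ there is a second coupling term, and the $E_2$-equation becomes genuinely inhomogeneous.

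In the equal case this is fatal. Take $k=2$, $d=1$, $E_3$ as above, and $E_1=\left(\begin{smallmatrix}0&t^{-1}y\\ 0&0\end{smallmatrix}\right)$, supported on the strict upper off-diagonal exactly as your fallback wants. Then $E_1H_3=0$, but $H_1b_2^{-1}H_3$ has the single entry $y^q(x^{q^2}-x)$ at position $(2,1)$. The $(1,3)$-condition therefore becomes $(H_2)_{21}\equiv y^q(x^{q^2}-x)\pmod t$ instead of $(H_2)_{21}\in t\calO_L$. Together with $(H_2)_{12}\in\calO_L$, this forces the $t^{-1}$-coefficient $a$ of $(E_2)_{11}$ to satisfy $a^{q^2}-a=y^{q^2}(x^{q^2}-x)^q$. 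So the $\mathbb F_{q^2}$-valued coordinate that the statement treats as discrete is glued to $(x,y)$ by a nontrivial Artin--Schreier cover. The surface it defines is irreducible with $H^2_c\neq0$, hence not an affine plane. Consequently the equal-case fiber is not a product of two-block fibers, whatever representatives you choose.

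For $(k_1,d_1)\neq(k_2,d_2)$ the situation is better. The operator $X\mapsto b_1\sigma(X)-Xb_2$ has nonzero slope and is bijective, so over fixed $(E_1,E_3)$ the admissible $E_2$ form a translate of the homogeneous two-block solution set. To conclude in that case you would still need three things:
\begin{enumerate}
\item show that the translation depends algebraically on $(E_1,E_3)$;
\item show it is compatible with the twisted relation $E_2\sim E_2+E_1(E_3'-E_3)$;
\item trivialize the resulting iterated affine fibration.
\end{enumerate}
The product decomposition does not come for free.
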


\bigskip

\section{Triviality of \(\beta\) for the Drinfeld Case in a $N$-Block Levi $M$}

In this section we extend the block-matrix analysis to the general case
\[
G = \GL_{k_1 + k_2 + \cdots + k_N}, 
\qquad 
M = \GL_{k_1}\times \GL_{k_2}\times\cdots\times \GL_{k_N}.
\]
Set $b_i = \tau^{d_i}_{k_i}$ such that $d_i \equiv \pm 1 \pmod {k_i}$ and $\mu_i$ to be the corresponding minuscule cocharacter of $b_i$ in $\GL_{k_i}$. Set the superbasic
\(
b = \diag(b_1, b_2,\dots, b_N)
\)
and minuscule cocharacter
\(\mu=(\mu_1 \oplus \mu_2\oplus \dots\oplus \mu_N)_{\mathrm{dom}}\).  Write \(\{(h_1,f_1),\cdots,(h_m,f_m)\}\) for the distinct values among the \((k_i,d_i)\), each with multiplicities \(r_i\). We shall show that for each point
\((A_1,\dots,A_N)\bmod K\in X^M_{\mu_M}(b)\), the fiber \(\beta^{-1}(A_1,\dots,A_N)\)
is canonically isomorphic to \[\mathbb A^{\sum_{1\leq i< j \leq m} \DrFibDim{h_i}{f_i}{h_j}{f_j}r_ir_j+\sum_{1\leq i \leq m}(h_i-1)\frac{r_i(r_i-1)}{2}}\;\times\;
\prod_{1\leq i \leq m}\Bigl(L^{\sigma^{h_i}}/\calO_{L^{\sigma^{h_i}}}\Bigr)^{h_i\frac{r_i(r_i-1)}{2}},\] of dimension
\(\sum_{1\leq i<j\leq N} \DrFibDim{k_i}{d_i}{k_j}{d_j}\), and hence \(\beta\) is a trivial bundle
of that rank over the target.\\
Fix a point 
\((A_1,\dots,A_N)\bmod K\in X^M_{\mu_M}(b)\).  By Iwasawa decomposition we may write any lift \(g\in G(L)\) of this coset in the form
\[
g \;=\;
U\,(I + E),
\qquad
U=\diag(A_1,\dots,A_N),
\quad
E=\bigl(E_{ij}\bigr)_{1\le i,j\le N},
\]
where \(E_{ij}=0\) unless \(i<j\). For another representative \(g'=U\,(I+E')\), one checks
\[
g^{-1}g'
=(I - E + E^2 - \cdots +(-1)^{N-1}E^{N-1})\,(I+E')
=I \;+\;\Delta,
\]
and for each \(i<j\), we compute
\begin{equation}
\Delta_{ij}
\;=\;
E'_{ij}\;-\;E_{ij}
\;-\;\sum_{i<k<j}E_{ik}\,\Delta_{kj}.
\end{equation}
The coset-congruences \(g^{-1}g'\in K\) yield, for each \(i<j\),
 $\Delta_{ij}\;\in\;\calO_L^{\,k_i\times k_j}.$
In other words, one solves the \((i,j)\)-entry of \(\Delta\) by first taking the naive difference \(E'_{ij}-E_{ij}\) and then subtracting all the  ``earlier'' corrections coming from $E_{ik}\,\Delta_{kj}$.  This triangular recursion uniquely determines every \(\Delta_{ij}\).\\
Next, write
\[
g^{-1}b\,\sigma(g)
=\begin{pmatrix}
M_1 & H_{12} & \cdots & H_{1N}\\
 & M_2 &\ddots&\vdots\\
 & & \ddots & H_{N-1,N}\\
 & & & M_N
\end{pmatrix},
\]
where
\[
M_i
=A_i^{-1}\,b_i\,\sigma(A_i),
\]
and for each \(i<j\), we compute
\begin{equation}
H_{ij}
=\;M_i\,\sigma(E_{ij})
\;-\;E_{ij}\,M_j
\;-\;\sum_{i<\ell<j}E_{i\ell}\,H_{\ell j}.
\end{equation}
\begin{comment}
Admissibility \(g^{-1}b\,\sigma(g)\in K\,t^\mu\,K\) is equivalent (by Lemma 2.1) to that for each \(i<j\), \(H_{ij}\) has valuation \(\ge1\) in its lower-left entry and \(\ge0\) elsewhere.\\
\end{comment}
Using induction on $(i,j)$ in lexicographic order in (5.1) and the two-block base case, we see that we can assume $\Delta_{1N}
=E'_{1N}-E_{1N}$ and $\Delta_{ij} = 0$ for other $(i,j)$ (i.e. $E'_{ij} = E_{ij}$ for other $(i,j)$), which means that the $1N$-th block is the only non-trivial block.\\
As before, in the Drinfeld setting we can take $M_i = b_i$. We want to eliminate all the terms $E_{i\ell}\,H_{\ell j}$ in (5.2) so that the $1N$-th block is completely reduced to the two-block base case. In the case that all the $(k_i,d_i)$'s are pairwise distinct, from Section 4 we see that we can canonically make all $E_{i\ell}$ have zeros on the first column while making all $H_{\ell j}$ have zeros except on the first row, so that $E_{i\ell}\,H_{\ell j} = 0$ and we finish the argument.\\
For the degenerate case where some $(k_i,d_i)$'s are the same, we use another approach. Notice that this reduces to the case where $b$ is basic in $G$, which we would not be interested in itself. But here we need to study the arithmetic computation in this case to get the desired computation in our general situation. Firstly we introduce the following lemma:
\begin{lem}\label{lem:all‐blocks‐equal}
Suppose 
\[
G=\GL_{Nk}, 
\quad
M=\underbrace{\GL_k\times\cdots\times\GL_k}_{N\text{ factors}},
\quad
d \in \{1,k-1\}+k\mathbb{Z},
\quad
b=\diag\bigl(\tau_k^d,\dots,\tau_k^d\bigr).
\]
Set $\mu_0$ to be the corresponding minuscule cocharacter of $\tau^d_k$ and set \(\mu=(\underbrace{\mu_0\oplus\cdots\oplus\mu_0}_{N\text{ factors}})_{\mathrm{dom}}\). Then \(b\) is basic in \(G\), and the ADLV \(X^G_\mu(b)\) coincides with the basic locus \(X_b(b)\), and there is a canonical identification
\[
X^G_\mu(b)
\;=\;
X_b(b)
\;\cong\;
J_b\big/\bigl(J_b\cap K\bigr).
\]
In particular, \(X^G_\mu(b)\) is a single \(J_b\)-orbit of hyperspecial cosets, and the reduction to Levi map $\beta\colon X^G_{\mu}(b)\to X^M_{\mu_M}(b)$ is trivial with fiber $\Bigl(\mathbb A^{\,k-1}\;\times\;
\Bigl(L^{\sigma^k}/\calO_{L^{\sigma^k}}\Bigr)^k)^{\frac{N(N-1)}{2}}$.
\end{lem}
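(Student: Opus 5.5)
The plan has three parts: first show that \(b\) is basic and that \(X^G_\mu(b)=X_b(b)\cong J_b/(J_b\cap K)\); then reduce the fiber of \(\beta\) block by block; then glue the blocks using the triangular recursions (5.1) and (5.2).

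\emph{Basicness and the identification.} The Newton point of \(\tau_k^d\) is \((d/k)\cdot(1,\dots,1)\). Hence \(\nu_b=(d/k)\cdot(1,\dots,1)\in X_\bullet(Z(G))_{\mathbb Q}\), so \(b\) is basic in \(G\). Since \(\mu_0\) is the minuscule cocharacter attached to \(\tau_k^d\), we have \(\tau_k^d\in Kt^{\mu_0}K\), so \(b\in Kt^{\mu}K\). Because \(\mu\) is minuscule, it is the unique minimal element with \(\kappa_G(\mu)=\kappa_G(b)\). Rapoport's formula \eqref{eq:Rapoport-dim} gives \(\langle\rho,\mu-\nu_b\rangle-\tfrac12\mathrm{def}_G(b)\). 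I would evaluate it directly: \(\mathrm{def}_G(b)=Nk-N\), since \(J_b\) is an inner form of \(\GL_N\) over the division algebra of invariant \(d/k\). Rather than rely on this numerology, I would argue exactly as in the Drinfeld discussion of Section~2: the condition \(g^{-1}b\sigma(g)\in Kt^\mu K=KbK\) should be equivalent to \(g^{-1}b\sigma(g)\in J_b\cdot K\). This yields the orbit map \(J_b\to X^G_\mu(b)\), \(j\mapsto jK\), and hence the identification \(X_b(b)\cong J_b/(J_b\cap K)\).

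\emph{The fiber via the off-block recursion.} Fix \((A_1,\dots,A_N)\bmod K\). Each factor is a point of the zero-dimensional Drinfeld locus \(J_{\tau_k^d}/(J_{\tau_k^d}\cap K)\), so we may take \(M_i=b_i=\tau_k^d\) for all \(i\). For each pair \(i<j\), the pair \((E_{ij},H_{ij})\) satisfies (5.2) with \(M_i=M_j=\tau_k^d\). This is exactly the equal-size situation \(n=m=k\) of Theorem~\ref{thm:two-block triviality} (for \(d\equiv1\)) or of Theorem~\ref{thm:two-block triviality dual case} (for \(d\equiv -1\)), except for the correction terms \(\sum_{i<\ell<j}E_{i\ell}H_{\ell j}\) and the analogous corrections in (5.1).

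The key step is the normalization from Section~4.2. By Lemma~3.4 (discreteness), every loop in a block \(E_{\ell j}\) can be represented with base point \(w_0\in L^{\sigma^k}\). Then all the loop congruences become genuine equalities, and the free \(S\)-parameters can be chosen so that \(H_{\ell j}=0\) identically. I would carry this out inductively in the lexicographic order on \((i,j)\), as in Section~5. First normalize all blocks \(E_{\ell j}\) with \(j-\ell\) small, so that \(H_{\ell j}=0\). Then (5.2) for a larger pair \((i,j)\) reduces to \(H_{ij}=\tau_k^d\sigma(E_{ij})-E_{ij}\tau_k^d\). Similarly, the chosen representatives make \(\Delta_{ij}=E'_{ij}-E_{ij}\) in (5.1).

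Each of the \(N(N-1)/2\) blocks then contributes independently the two-block fiber \(\mathbb A^{k-1}\times(L^{\sigma^k}/\calO_{L^{\sigma^k}})^k\). These coordinates are uniform in the base point, namely the entries \(c_{1,k},\dots,c_{k,k}\) of each block, which gives global triviality of \(\beta\) with the stated fiber.

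\emph{Main obstacle.} The delicate point is the claim that \(H_{\ell j}\) can be made identically zero while the \(\mathbb A^{k-1}\) parameters remain free. The \(S_i\)-entries are only determined modulo \(\calO_L\) through \(t^{-1}\)-shifted congruences, and exactness requires choosing lifts compatibly with \(\sigma\). I would first try choosing representatives of the form \(t^{-1}[a]\) with \(a\in\bar\kappa\) a Teichmüller-type lift, combined with \(\sigma^k\)-fixed base points. If exact vanishing fails, the fallback is to show that \(E_{i\ell}H_{\ell j}\) lands in the lattice allowed by Corollary~\ref{cor:block‐monomial}, so that it does not affect the congruences for \(E_{ij}\).
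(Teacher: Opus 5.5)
\textbf{There is a genuine gap, and it cannot be closed.} Your outline is essentially the paper's own argument: basicness from the Newton point, then the Section~4.2 normalization making every $H_{ij}$ vanish, which forces $g\in J_b(F)$. The step you single out as the main obstacle is exactly where this breaks, because for $N\ge 2$ and $k\ge 2$ the identification $X^G_\mu(b)\cong J_b(F)/(J_b(F)\cap K)$ is false.

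Finish the Rapoport computation you began. With $\mathrm{def}_G(b)=N(k-1)$, $\langle\rho,\nu_b\rangle=0$ and $\langle\rho,\mu\rangle=N^2(k-1)/2$, one gets $\dim X^G_\mu(b)=N(N-1)(k-1)/2$. This is precisely the dimension of the fiber asserted in the second half of the statement, whereas $J_b(F)/(J_b(F)\cap K)$ is zero-dimensional. So the two halves of the lemma are incompatible unless $N=1$ or $k=1$. The equivalence ``$g^{-1}b\sigma(g)\in KbK$ iff $g^{-1}b\sigma(g)\in J_b\cdot K$'' that you import from Section~2 holds there only because a single superbasic Drinfeld block has a zero-dimensional locus, i.e.\ a single $J_b(F)$-orbit. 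It does not transfer to $\operatorname{diag}(\tau_k^d,\dots,\tau_k^d)$.

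Here is an explicit witness. Since $\sigma(b)=b$ and $b^k=t^dI$, every $j\in J_b(F)$ satisfies $\sigma^k(j)=b^{-k}jb^k=j$, so every point of $J_b(F)K/K$ is $\sigma^k$-fixed. Take $N=2$, $d=1$, $k\ge 2$, and the base point $(I_k,I_k)$. For $a\in\bar\kappa$, let $\tilde a\in\calO_L$ be its Teichm\"uller lift, and let $C_a$ be the $k\times k$ matrix whose only nonzero entry is $t^{-1}\tilde a$ at position $(1,k)$. Then $\tau_k\sigma(C_a)-C_a\tau_k$ has entry $-\tilde a$ at $(1,1)$, entry $\sigma(\tilde a)$ at $(k,k)$, and zeros elsewhere. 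By the block-monomial criterion of Section~2, the cosets $g_aK$ with $g_a=\begin{pmatrix}I_k&C_a\\0&I_k\end{pmatrix}$ therefore all lie in $X^G_\mu(b)$, and they are pairwise distinct. Moreover $\sigma^k(g_aK)=g_{a^{q^k}}K$ with $q=\#\kappa$, so $g_aK\notin J_b(F)K/K$ whenever $a\notin\mathbb F_{q^k}$.

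This is exactly what your Teichm\"uller-lift idea runs into. Such lifts put $H$ into the allowed lattice but do not make it vanish. Exact vanishing of all $H_{ij}$ with $M_i=b_i$ means $g^{-1}b\sigma(g)=b$, i.e.\ $g\in J_b(F)$, which happens only for $\sigma^k$-rational parameter values. So ``$H_{\ell j}=0$ with the $\mathbb A^{k-1}$-parameters still free'' is impossible.

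Your fallback also fails as stated. The block $E_{i\ell}$ may be $t^{-r}I_k$ for any $r$, since this commutes with $b$ and $\sigma$, so $E_{i\ell}H_{\ell j}$ lies in no fixed lattice. The fiber assertion is at least consistent with Rapoport's formula. However, any proof of it must absorb the cross terms by a change of variables in $E_{ij}$ rather than kill them; for example, replace $E_{13}$ by $E_{13}+t^{-r}E_{23}$ when $E_{12}=t^{-r}I_k$.
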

\begin{proof}
Since all block‐slopes of \(b\) agree, the Newton point \(\bar\nu_b\) factorizes through the center of $G$, so \(b\) is basic in $G$. Keeping the notations above, by Section 4, we know that we can make all $H_{ij}$ in the expression of $g^{-1}b\sigma(g)$ as zero, so that $g \in J_b$ as required, and it follows that $X^G_\mu(b)
\;=\;
X_b(b)
\;\cong\;
J_b\big/\bigl(J_b\cap K\bigr).$ The fact that $\beta\colon X^G_{\mu}(b)\to X^M_{\mu_M}(b)$ is trivial with fiber $\Bigl(\mathbb A^{\,k-1}\;\times\;
\Bigl(L^{\sigma^k}/\calO_{L^{\sigma^k}}\Bigr)^k)^{\frac{N(N-1)}{2}}$ follows from that for each $i<j$, $E_{ij}$ is parametrized by $\mathbb A^{\,k-1}\;\times\;
\Bigl(L^{\sigma^k}/\calO_{L^{\sigma^k}}\Bigr)^k$, which is derived from the previous arguments.
\end{proof}
\ \\
Next we state the first main theorem in this section:
\begin{thm}\label{thm:N‐block‐triviality}
Let 
\[
G=\GL_n,\qquad
M=\GL_{k_1}\times\cdots\times\GL_{k_N}.
\]
For $d_i \in \{1, k_i-1\}+k_i\mathbb{Z}$, set $\mu_i$ to be the corresponding minuscule cocharacter of $\tau_{k_i}^{d_i}$ and
\[
b=\diag\bigl(\tau_{k_1}^{d_1},\dots, \tau_{k_N}^{d_N}\bigr),
\quad
\mu=(\mu_1 \oplus \mu_2\oplus \dots\oplus \mu_N)_{\mathrm{dom}}.
\]
Write \(\{(h_1,f_1),\cdots,(h_m,f_m)\}\) for the distinct values among the \((k_i,d_i)\), each with multiplicities \(r_i\).
For each \(i=1,\dots,m\), let 
\[
b_i=\diag\bigl(\underbrace{\tau_{h_i}^{f_i},\dots,\tau_{h_i}^{f_i}}_{r_i}\bigr)
\;\in\;\GL_{h_i r_i}(L),
\]
and let $\mu^{(i)}$ be the $\GL_{h_ir_i}$-dominant rearrangement of the direct sum of $r_i$-copies of ``the minuscule dominant cocharacter corresponding to $\tau_{h_i}^{f_i}$'' in $\GL_{h_ir_i}$.
Then the partial reduction to Levi map
\[
\beta_1\colon
X^G_\mu(b)
\;\longrightarrow\;
\prod_{i=1}^m
X^{\GL_{h_i r_i}}_{\mu^{(i)}}(b_i)
\]
is isomorphic, over \(\bar \kappa\), to a trivial vector bundle of rank \[\sum_{1\leq i<j\leq m} \DrFibDim{h_i}{f_i}{h_j}{f_j}r_ir_j.\] Consequently, by Lemma~\ref{lem:all‐blocks‐equal}, the full map
\(\beta\colon X^G_\mu(b)\to X^M_{\mu_M}(b)\)
is a trivial bundle with fiber:
\[\mathbb A^{\sum_{1\leq i< j \leq m} \DrFibDim{h_i}{f_i}{h_j}{f_j}r_ir_j+\sum_{1\leq i \leq m}(h_i-1)\frac{r_i(r_i-1)}{2}}\;\times\;
\prod_{1\leq i \leq m}\Bigl(L^{\sigma^{h_i}}/\calO_{L^{\sigma^{h_i}}}\Bigr)^{h_i\frac{r_i(r_i-1)}{2}}\].
\end{thm}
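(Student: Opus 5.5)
First reorder the blocks so that blocks with equal $(k_i,d_i)$ are adjacent. This is a ``change of basis'' by a permutation matrix in $G(F)\cap K$, exactly as in Section 4. It preserves $X^G_\mu(b)$ and $X^M_{\mu_M}(b)$ up to canonical isomorphism and respects the fiber structure of $\beta$. After reordering, $b=\diag(b_1,\dots,b_m)$, with $b_i$ as in the statement, lies in the coarser standard Levi $M'=\prod_i\GL_{h_ir_i}$. Each $b_i$ is basic in $\GL_{h_ir_i}$, and $\beta$ factors as $\beta=\beta_2\circ\beta_1$. Here $\beta_1$ is the reduction to $M'$, and $\beta_2=\prod_i\beta^{(i)}$ with $\beta^{(i)}\colon X^{\GL_{h_ir_i}}_{\mu^{(i)}}(b_i)\to \prod X^{\GL_{h_i}}_{\mu_{0}}(\tau_{h_i}^{f_i})$. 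Lemma~\ref{lem:all-blocks-equal} already says that each $\beta^{(i)}$ is trivial with fiber $\bigl(\mathbb A^{h_i-1}\times(L^{\sigma^{h_i}}/\calO_{L^{\sigma^{h_i}}})^{h_i}\bigr)^{r_i(r_i-1)/2}$. The statement about $\beta$ then follows from the statement about $\beta_1$, since a composition of trivial bundles is trivial and the fibers multiply. So the work is to show that $\beta_1$ is a trivial vector bundle of rank $\sum_{i<j}\DrFibDim{h_i}{f_i}{h_j}{f_j}r_ir_j$.

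For $\beta_1$, I would use the $m$-block version of the Iwasawa parametrization: $g=U(I+E)$ with $U=\diag(A'_1,\dots,A'_m)$ representing a point of $\prod_i X^{\GL_{h_ir_i}}_{\mu^{(i)}}(b_i)$, together with the recursions (5.1) for the coset congruences and (5.2) for the $H_{ij}$. By Lemma~\ref{lem:all-blocks-equal}, every base point $A'_i$ may be taken in $J_{b_i}$, so that $M_i:=A_i'^{-1}b_i\sigma(A'_i)=b_i$ identically. This is what makes the analysis uniform over the base. Each off-diagonal block $E_{ij}$ ($i<j$) now subdivides into an $r_i\times r_j$ array of $h_i\times h_j$ sub-blocks. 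The equation $b_i\sigma(E_{ij})-E_{ij}b_j$ decouples into $r_ir_j$ copies of the two-block equation for $(\tau_{h_i}^{f_i},\tau_{h_j}^{f_j})$, because $b_i,b_j$ are block-diagonal. Corollary~\ref{cor:block-monomial} gives the entrywise integrality conditions, again subblock by subblock. Since $(h_i,f_i)\neq(h_j,f_j)$, Section 3 (Theorems~\ref{thm:two-block triviality}, \ref{thm:drinfeld-q1neqq2-zero-fiber}, \ref{thm:two-block triviality dual case}, \ref{thm:two-block triviality mixed case}) shows the following. Each sub-block contributes an $\mathbb A^{\DrFibDim{h_i}{f_i}{h_j}{f_j}}$ with no discrete factor, with free coordinates placed on the strict upper off-diagonals, or at $c_{1,m}$ in the mixed case.

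It remains to kill the correction terms $\sum_{i<\ell<j}E_{i\ell}H_{\ell j}$ in (5.2) and the term $\sum E_{ik}\Delta_{kj}$ in (5.1). I would proceed by induction on $j-i$, following the pairwise-distinct argument of Section 4. For (5.1), fix once and for all a set-theoretic section of representatives for each $E_{\ell j}$, namely polynomial in the free coordinates with the other entries zero. Then $\Delta_{kj}=0$ off the top corner, and the congruence reduces to $E'_{ij}-E_{ij}\in\calO_L$. For (5.2), normalize each $E_{i\ell}$ to vanish on its first column within every sub-block. Also choose the loop representatives of $E_{\ell j}$ so that the nonzero entries of $H_{\ell j}$ are confined to the first rows of sub-blocks. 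The \textbf{main obstacle} is that inside the enlarged block $b_\ell$, ``first row/first column'' must be interpreted sub-block by sub-block. One has to check that $E_{i\ell}H_{\ell j}$ still vanishes; its sub-block $(a,c)$ is $\sum_b (E_{i\ell})_{ab}(H_{\ell j})_{bc}$, and each summand is zero by the same column/row support argument. One must also check that this vanishing persists in the mixed configuration, where the free parameter sits at $c_{1,m}$. If this support argument fails there, I would instead replace the exact vanishing by the weaker statement that $E_{i\ell}H_{\ell j}$ lies in the required lattice, using $v(E_{i\ell})\ge -1$ only on entries paired with $t\calO_L$-entries of $H_{\ell j}$.

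Once the corrections vanish, the fiber of $\beta_1$ over each point is a product over $i<j$ of the two-block fibers. These are affine spaces whose coordinates are given by the same entries for every base point, since $M_i=b_i$ uniformly. This yields a global trivialization. Linearity of the transition maps is automatic because the coordinates are fixed matrix entries independent of the base point, so $\beta_1$ is a trivial vector bundle of the asserted rank.
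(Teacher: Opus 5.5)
Your proposal follows the paper's proof essentially step for step: you regroup equal blocks by a change of basis so that $b=\diag(b_1,\dots,b_m)$, use Lemma~5.1 to take $M_i=b_i$, induct on $(i,j)$ through the recursions (5.1) and (5.2), apply the two-block Drinfeld analysis of Section~3 to each of the $r_ir_j$ sub-blocks of size $h_i\times h_j$, and then obtain the statement for the full $\beta$ from Lemma~5.1. The step you single out as the main obstacle is settled in the paper by exactly your primary route: the Section~4 normalization is applied to each $h_i\times h_\ell$ sub-block of $E_{i\ell}$ and each $h_\ell\times h_j$ sub-block of $H_{\ell j}$, which gives $E_{i\ell}H_{\ell j}=0$ exactly, so your lattice-membership fallback is not needed; as stated it would not suffice anyway, since the valuation $-1$ free parameters of $E_{i\ell}$ generally multiply entries of $H_{\ell j}$ that are only required to be integral.
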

\begin{proof}
By a proper change of basis, we may simply assume that $b = \diag\bigl(b_1,\dots, b_m\bigr)$. Keep all the notations as before, and write 
\[
g^{-1}b\,\sigma(g)
=\diag(M_1,\cdots,M_m) (I+\bigl(H_{ij}\bigr)_{1\le i<j\le m}),
\]
then as (5.2), one has the recursion formula
\[
H_{ij}
=\;M_i\,\sigma(E_{ij})
\;-\;E_{ij}\,M_j
\;-\;\sum_{i<\ell<j}E_{i\ell}\,H_{\ell j},
\]
and by Lemma~\ref{lem:all‐blocks‐equal} we may take \(M_i=b_i\).  We proceed by induction on the pair \((i,j)\) in lex order.  For the minimal pairs \(j=i+1\), the sum is empty and  
\begin{equation}
H_{i,i+1}
=b_i\,\sigma(E_{i,i+1})-E_{i,i+1}\,b_{i+1}.
\end{equation}
We see that every original $h_i\times h_{i+1}$ sub-block of $H_{i,i+1}$ in (5.3) exactly satisfies the two-block condition since $b_i, b_{i+1}$ are diagonal block matrices with Drinfeld setting superbasic sub-blocks.  Suppose we have arranged parameters so that 
\(\sum_{i<\ell<j}E_{i\ell}H_{\ell j}=0\),  then the formula of \(H_{ij}\) reduces to  
\[
H_{ij}
=b_i\,\sigma(E_{ij})-E_{ij}\,b_j,
\]
and one applies the two-block case on each original $h_i\times h_{j}$ sub-block of $H_{ij}$ to solve for \(E_{ij}\). Since $h_i$'s are pairwise distinct, we can indeed make each $E_{i\ell}H_{\ell j}$ as 0, by applying the same method in Section 4 to each original $h_i\times h_{l}$ sub-block of $E_{il}$ and each $h_l\times h_{j}$ sub-block of $H_{lj}$. For each $H_{ij}$, there are $r_ir_j$ many original $h_i\times h_{j}$ sub-blocks so by Theorem~\ref{thm:two-block triviality}, the theorem follows.
\end{proof}
\ \\
Indeed, the map $\beta_1$ in Theorem ~\ref{thm:N‐block‐triviality} is the essential map that we want to look at, since it is a ``vector bundle'', while the fiber map in Lemma~\ref{lem:all‐blocks‐equal} serves as the ``degenerate case'' where the element $b$ is already basic in $G$. We have the following main theorem:
\begin{thm}\label{thm:HN‐triviality‐general}
Let 
\[
G=\GL_n,\quad b\in G(L),
\quad
%\mu=(\underbrace{1,\dots,1}_{r},0,\dots,0)
\]
and suppose \(M\subset G\) is a Levi subgroup such that \(b\) is basic in \(M\), and that \(M\) is maximal with this property. Writing \(\mu_M\) for the unique \(M\)-dominant and minuscule cocharacter corresponding to $b$ and let \(\mu = (\mu_M)_{\mathrm{dom}}\). Assume that for some Levi $M' \subset M$ with $b\in M'(L)$ and $\mu_{M'}$ the unique $M'$-dominant cocharacter corresponding to $b$, the tuple $(M', \mu_{M'}, b)$ satisfies the Drinfeld setting, then the reduction to Levi map
\[
\beta\colon X^G_\mu(b)\;\longrightarrow\;X^M_{\mu_M}(b)
\]
is isomorphic to a trivial vector bundle (over \(\bar \kappa\)).  
\end{thm}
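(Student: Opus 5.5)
The plan is to reduce the statement to Theorem~\ref{thm:N‐block‐triviality} by unwinding the relation between $M$, $M'$ and $b$. Since $b$ is basic in $M$ and $M$ is maximal with this property, $M=\prod_{i}\GL_{h_ir_i}$. Here the blocks correspond to the distinct Newton slopes of the isocrystal $(L^n,b\sigma)$, and the $i$-th block $b_i$ is isoclinic. The Levi $M'\subset M$ refines each $\GL_{h_ir_i}$ into a product of smaller blocks on which $b$ is superbasic. The Drinfeld hypothesis on $(M',\mu_{M'},b)$ says $\dim X^{M'}_{\mu_{M'}}(b)=0$. As in the discussion following the two-block case, this forces each superbasic component to be $\sigma$-conjugate to $t^{q}\tau_{k}^{p}$ with $p\in\{1,k-1\}$.

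First, I would use the Dieudonn\'e--Manin normal form recalled in Section~2 to $\sigma$-conjugate $b$ by an element of $M'(L)$. After this conjugation,
\[
b=\diag\bigl(\tau_{k_1}^{d_1},\dots,\tau_{k_N}^{d_N}\bigr),\qquad d_i\in\{1,k_i-1\}+k_i\mathbb Z.
\]
Conjugation by $m\in M(L)$ gives compatible isomorphisms $X^G_\mu(b)\cong X^G_\mu(m^{-1}b\sigma(m))$ and $X^M_{\mu_M}(b)\cong X^M_{\mu_M}(m^{-1}b\sigma(m))$, via $xK\mapsto m^{-1}xK$. These isomorphisms commute with $\beta$, because $\beta$ is induced by $P(L)\to M(L)$ and left multiplication by $M(L)$ preserves the Iwasawa decomposition. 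It then remains to check that the cocharacters match. The $M$-dominant minuscule $\mu_M$ attached to $b$ has $i$-th component equal to the $\GL_{h_ir_i}$-dominant rearrangement $\mu^{(i)}$ of the $r_i$ copies of the minuscule cocharacter of $\tau_{h_i}^{f_i}$. This holds because in $\GL_{h_ir_i}$ a minuscule cocharacter is determined by its total weight $v(\det b_i)$. Hence $\mu=(\mu_M)_{\dom}$ agrees with the $\mu$ in Theorem~\ref{thm:N‐block‐triviality}.

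Second, with this identification, the map $\beta$ of the present statement is exactly the partial reduction map $\beta_1$ of Theorem~\ref{thm:N‐block‐triviality}. Its target is $\prod_i X^{\GL_{h_ir_i}}_{\mu^{(i)}}(b_i)=X^M_{\mu_M}(b)$. That theorem says $\beta_1$ is isomorphic over $\bar\kappa$ to a trivial vector bundle of rank $\sum_{i<j}\DrFibDim{h_i}{f_i}{h_j}{f_j}r_ir_j$. This gives the claim. The maximality of $M$ guarantees that the distinct pairs $(h_i,f_i)$ in different $M$-blocks have distinct slopes $f_i/h_i$. So no off-block pair falls into the degenerate equal-slope situation of Lemma~\ref{lem:all‐blocks‐equal}, and every off-block coordinate contributes only affine-space (not discrete) factors.

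The main obstacle is justifying that the bundle is a \emph{vector} bundle, i.e.\ that the trivialization is linear rather than merely affine. In the proof of Theorem~\ref{thm:N‐block‐triviality}, the cross terms $E_{i\ell}H_{\ell j}$ are killed by a normalization of representatives chosen block by block. I would verify that with these normalizations the free coordinates, namely the last-column entries of the strict upper off-diagonals in each $h_i\times h_j$ sub-block, enter the defining congruences linearly. This needs care in two places. One is in the non-distinct-$h$ case within a single $M$-block, where the $L^{\sigma^{h}}/\calO$ choices are absorbed into the base $X^{\GL_{h_ir_i}}_{\mu^{(i)}}(b_i)$ rather than into the fiber. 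The other is checking that the resulting identification is independent of the base point, which rests on the choice $M_i=b_i$ being uniform over the $J_b$-orbit.
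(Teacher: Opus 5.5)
Your reduction is the paper's: you $\sigma$-conjugate $b$ by an element of $M'(L)$ into the form $\diag(\tau_{k_1}^{d_1},\dots,\tau_{k_N}^{d_N})$, check that $\mu_M$ is unchanged, and read $\beta$ as the map $\beta_1$ of the $N$-block theorem of Section~5. Your extra checks, that $\beta$ is compatible with $M(L)$-$\sigma$-conjugation and that the cocharacters match, are correct.

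\textbf{The gap.} It lies in the step you flag as the main obstacle. You want base-point independence from ``the choice $M_i=b_i$ being uniform over the $J_b$-orbit''. That presupposes $X^M_{\mu_M}(b)=J_bK/K$, which fails once some $M$-block is $r_i\ge 2$ copies of $\tau_{h_i}^{f_i}$ with $h_i\ge 2$. This is exactly the case the auxiliary $M'\subsetneq M$ in the hypothesis allows: if $M'=M$, the Drinfeld condition forces $r_i=1$ or $h_i=1$.

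\textbf{Why the base is too big.} The within-block data you say are absorbed into the base include the $\mathbb A^{h_i-1}$-factors of the all-blocks-equal lemma, not only the discrete factors. Equivalently, by the dimension formula (with $f_i\equiv\pm1\bmod h_i$, $\langle\rho,\mu^{(i)}\rangle=(h_i-1)r_i^2/2$ and defect $r_i(h_i-1)$),
\[
\dim X^{\GL_{h_ir_i}}_{\mu^{(i)}}(b_i)=\tfrac12\,(h_i-1)\,r_i(r_i-1)>0,
\]
whereas $J_bK/K\cong J_b/(J_b\cap K)$ is discrete.

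\textbf{Consequence for the fiber.} Over a base point with $A_i\notin J_{b_i}K$, no representative gives $M_i=A_i^{-1}b_i\sigma(A_i)=b_i$, since that would force $A_ik\in J_{b_i}$. So $M_i$ is not monomial. The block-monomial corollary of Section~2 and the loop tracing of Section~3 then do not compute the fiber there, and your ``last-column'' linear coordinates are no longer available.

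\textbf{Citing the $N$-block theorem does not help.} That theorem gets $M_i=b_i$ from the all-blocks-equal lemma. The lemma's identification $X^G_\mu(b)\cong J_b/(J_b\cap K)$ contradicts the positive-dimensional fiber $\bigl(\mathbb A^{k-1}\times(L^{\sigma^k}/\calO_{L^{\sigma^k}})^k\bigr)^{N(N-1)/2}$ asserted in the same lemma, so the paper's two-line proof has the same hole.

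\textbf{What your argument actually proves.} It gives the statement only when every $r_i=1$ or $h_i=1$, i.e.\ when $X^M_{\mu_M}(b)$ is discrete and constant fibers make triviality automatic. In general you must analyse $M_1\sigma(X)-XM_2\in\Lambda_{A,B}$ with non-monomial $M_1$ varying over a positive-dimensional base. The nonzero-slope quotient theorem of Section~7 gives affine-space fibers and Zariski-local triviality as an affine-space bundle. Linear transition functions and global triviality as a vector bundle would need a genuinely new argument.
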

\begin{proof}
Since \(b\) is basic in a Levi \(M\subset G\), under a ``change of basis'' and ``re-identification of a uniformizer'' and up to ``$\sigma$-conjugation isomorphism'' we may assume
\[
M = \prod_{i=1}^m \GL_{n_i},
\qquad
b = \diag\bigl(b_1,\dots,b_m\bigr),
\]
where each \(b_i\in\GL_{n_i}(L)\) is of the form $\tau_{n_i}^{d_i}$.  By a further ``change of basis'' within each \(\GL_{n_i}\)-block, we can diagonalize \(b_i\) into \(r_i\) copies of a superbasic element of size \(h_i\times h_i\):
\[
b_i \;\sim\;
\underbrace{\diag(\tau_{h_i}^{f_i},\dots,\tau_{h_i}^{f_i})}_{r_i\text{ times}},
\]
with $\gcd(h_i,f_i) = 1$. Maximality of \(M\) forces the pairs \((h_1,f_1),\dots,(h_m,f_m)\) to be pairwise distinct. Denote $\mu_i$ as the projection of $\mu_M$ onto the $\GL_{n_i}$ component, then Theorem~\ref{thm:N‐block‐triviality} applied to \(M\) shows that the reduction to Levi map
\[
\beta\colon X^G_\mu(b)\;\longrightarrow\; X^M_{\mu_M}(b) =
\prod_{i=1}^m X^{\GL_{h_i r_i}}_{\mu_i}(b_i)
\]
is a trivial vector bundle, as claimed.
\end{proof}

\section{The Non-Minuscule Case: Failure of Affine-Space Fibers}
In the reduction morphism
\[
\beta:\;X^G_{\mu}(b)\longrightarrow \bigsqcup_{\mu_M\in S_M(\mu,v_b)}X^M_{\mu_M}(b)
\]
the geometry of the fibers over $X^M_{\mu_M}(b)$ is especially transparent when $\mu_M$ is the unique
$M$-\emph{minuscule and dominant} cocharacter corresponding to $b$ and $\mu = (\mu_M)_{\text{dom}}$: all congruence constraints are linear modulo $\calO_L$, and the fibers assemble into Zariski locally trivial \emph{affine bundles}. When $\mu \succ (\mu_M)_{\text{dom}}$, or $\mu_M$ itself is \emph{not} minuscule for some $\mu_M\in S_M(\mu,v_b)$, the situation will be different. We will study some special examples in this case where the same Iwasawa/block analysis still apply but the fiber will be defined by some higher-level congruences, and the mod-$t$ relations are no longer linear; in particular, the fiber need not carry a canonical affine space structure.  Nevertheless, the reduction to Levi remains Zariski locally trivial as a principal bundle under an explicitly described set of parameters at higher congruence level.

\begin{comment}
\subsection*{General picture in the 2-block case}
Let $G=\GL_{n+m}$, $P=MN$ be a standard parabolic with Levi $M = \GL_n \times \GL_m$, and let $b = \diag(b_1, b_2)\in M(L)$ be basic.  Fix $\mu$ and the corresponding $\mu_M\in S_M(\mu,v_b)$.  Writing points in the fiber by upper block-triangular representatives
\[
g=\begin{pmatrix}A&C\\0&B\end{pmatrix},\qquad
M_1:=A^{-1}b_1\sigma(A),\ \ M_2:=B^{-1}b_2\sigma(B),
\]
the $(1,2)$-block constraint is of the form
\begin{equation}\label{eq:nonmin-scheme}
h \;=\; M_1\,\sigma(C)\;-\;C\,M_2 \;\in\; t^{r}\,M_{n_1,n_2}(\mathcal O_L),
\qquad r=\min_{\,\alpha>0\text{ in }N}\langle \alpha,\mu\rangle,
\end{equation}
together with a non-vanishing condition at level $t^{r}$ prescribed by the double-coset $Kt^{\mu}K$.  When $\mu_M$ is minuscule, $r\in\{0,1\}$ and \eqref{eq:nonmin-scheme} is linear mod $t$, yielding a vector bundle.  In general $r\ge2$ may occur; then the fiber is a torsor under the smooth connected unipotent group
\[
\mathcal U_{\mu}\;=\;\Bigl\{\,u\in N(L)\;:\; t^{-\mu}u\,t^{\mu}\in N(\mathcal O_L)\ \text{and}\ 
\text{level constraints at }\;t^r\text{ hold}\Bigr\},
\]
which is noncanonically isomorphic (as a variety) to an affine space but not to a \emph{vector group} unless all relevant root-depths equal $0$ or $1$.\
\end{comment}

\subsection*{A first example: \texorpdfstring{$\GL_2$}{GL2} with \texorpdfstring{$\mu=(2,0)$}{mu=(2,0)} and \texorpdfstring{$b=\diag(t,t)$}{b=(t,t)}}
Here $M=\GL_1\times\GL_1$, $b$ is basic and central, and the set $S_M(\mu,v_b)$ consists of $\mu_M=(1,1)$.  We show that the fiber of $\beta$ is Zariski trivial with fiber $F/\calO_F \times (\bar{\kappa}\,\setminus\,\kappa)$ (hence \emph{not} a vector space times a discrete set).

Choose representatives so that $M_1=M_2=t$ (block-monomial normalization).  A point in the fiber over $(A\oplus B)$ is represented by
\[
g=\begin{pmatrix} A & 0\\ 0 & B\end{pmatrix}\begin{pmatrix} 1 & C\\ 0 & 1\end{pmatrix}K,
\qquad C\in L,
\]
and a direct computation gives
\[
g^{-1}b\,\sigma(g)\;=\;
\begin{pmatrix}
t & t\bigl(\sigma(C)-C\bigr) \\
0 & t
\end{pmatrix}.
\]
By Corollary~\ref{cor:block‐monomial}, the condition
\[
g^{-1}b\,\sigma(g)\in K\,\diag(t^2,1)\,K
\]
reduces to the requirement that the reduction modulo $t$ of the $(1,2)$-entry be a \emph{unit}.  Equivalently,
\begin{equation}\label{eq:Gm-condition}
u\ :=\ t\,\bigl(\sigma(C)-C\bigr)\ \in\ \mathcal O_L^\times.
\end{equation}
\ \\
Thus the fiber over $(A\oplus B)$ identifies \emph{canonically} with
\[
\{\,C\in L\;:\; t(\sigma(C)-C)\in \calO_L^\times\,\}
\Big/\calO_L.
\]
We use the $t$-adic Witt expansion for $C$:
\[
C\, = \,\Sigma_{i \geq i_0}\, a_{i}\,t^i,  \qquad a_i \in \bar{\kappa}
\]
which preserves the addition, but not the multiplication when $F$ is of mixed characteristics.  Taking $C$ modulo $\calO_L$, we see $a_i = 0$ for $i\geq 0$.  For the principal part of $C$, one has $a_{-1}\in \bar{\kappa}\,\setminus\,\kappa$, and $a_i\in \kappa$ for $i<-1$. The fiber has dimension $1$ provided by $a_{-1}$, and it is \emph{not} a vector space.
\subsection{The two-block canonical Drinfeld case where $\mu = 2\,\omega_1^\vee$}
Let $G=\GL_{n+m}$, $M=\GL_n\times\GL_m$, and consider the
canonical Drinfeld configuration
\[
b=\diag(\tau_n,\tau_m),\qquad
\mu=(2,0,\ldots,0).
\]
One can check that $S(\mu, v_b) = \{(1, 0\ldots,0)\oplus (1, 0\ldots,0)\}$ and we pick $\mu_M$ to be the unique element in it. As before, after block-monomial normalization we may assume
\[
M_1:=A^{-1}b_1\sigma(A)=\tau_n,\qquad
M_2:=B^{-1}b_2\sigma(B)=\tau_m,
\]
and a point of the fiber of $\beta$ over $(A\oplus B)\bmod K$ is represented by
\[
g=\begin{pmatrix}A&0\\0&B\end{pmatrix}\begin{pmatrix}I_n&C\\0&I_m\end{pmatrix}K,\qquad
h:=\text{(1,2)-block of }g^{-1}b\sigma(g)
\;=\;\tau_n\,\sigma(C)\;-\;C\,\tau_m.
\]

\paragraph{Claim.}
For the admissibility condition $g^{-1}b\sigma(g)\in Kt^\mu K$ with
$t^\mu=\diag(t^2,1,\ldots,1)$, the off-diagonal block $h$ satisfies
\begin{equation}
\boxed{h_{n,1}\in\calO_L^\times,\qquad
h_{i,j}\in\calO_L\ \text{ for all }(i,j)\neq(n,1).\ }
\end{equation}

\begin{proof}[Proof via Smith normal form]
Recall the Smith normal form criterion (Lemma \ref{lem:smith normal form criterion}): with \(t^\mu=\diag(t^2,1,\ldots,1)\),
the nondecreasing list of exponents is \((0,\ldots,0,2)\); hence
\[
v\bigl(\Delta_k(g^{-1}b\sigma(g))\bigr)=0\quad (1\le k\le n+m-1),\qquad
v\bigl(\Delta_{n+m}(g^{-1}b\sigma(g))\bigr)=2.
\]
\smallskip
\emph{Step 1: Integrality of all entries of \(h\).}
By the SNF requirement that \(v(\Delta_1)=0\)
we already see \(h_{i,j}\in\calO_L\) for all \((i,j)\).

\smallskip
\emph{Step 2: The lower-left entry \(h_{n,1}\) is a unit.}
Consider the \((n+m-1)\times(n+m-1)\) minors obtained by deleting one row and
one column.  There are four types:

\begin{itemize}
\item[(a)] Delete a row and a column both in the \(\tau_n\) block: the resulting
minor is block upper-triangular with an \((n-1)\)-minor of \(\tau_n\) and the
full determinant of \(\tau_m\).  Since \(v(\det\tau_m)=1\) and
\(v(\Delta_{n-1}(\tau_n))\ge0\), its valuation is \(\ge1\).\\
 Delete a row and a column both in the \(\tau_m\) block: similarly the
valuation is \(\ge v(\det\tau_n)=1\).

\item[(b)]  Delete a row in the \(\tau_n\) block and a column in the \(\tau_m\) block: a zero entry from the lower-left block will be picked up and therefore the determinant of the minor is $0$.

\item[(c)] Delete a row in the \(\tau_m\) block and a column in the \(\tau_n\) block: either the row is not the last row of \(\tau_m\) or the column is not the first column of \(\tau_n\), then any mixed minor of
this type necessarily picks up the unique \(t\) from \(\tau_n\) (row \(n\) is
present) or from \(\tau_m\) (column \(1\) is present), and there is a unique corresponding $h_{i,j}$ such that the minor valuation
\(\ge 1+v(h_{i,j})\ge 1\).

\item[(d)] The \emph{special} mixed minor: delete the last row and the
first column.  Choose all remaining top columns and bottom columns; the
only way to connect the blocks is through the entry \(h_{n,1}\), while the
\((n-1)\)-minor of \(\tau_n\) avoiding the wrap \(t\) and the \((m-1)\)-minor
of \(\tau_m\) avoiding the wrap \(t\) both have valuation \(0\).  Thus this
minor has valuation exactly \(v(h_{n,1})\).
\end{itemize}
\smallskip
By the SNF lemma, \(v\bigl(\Delta_{n+m-1}(g^{-1}b\sigma(g))\bigr)=0\).  Among
the above candidates, all minors in (a)-(c) have valuation \(\ge1\), whereas
the special minor in (d) has valuation \(v(h_{n,1})\).  Therefore necessarily
\(v(h_{n,1})=0\), i.e.
\(
h_{n,1}\in\calO_L^\times.
\)

Combining Steps~1 and~2 yields
\[
\boxed{\ h_{n,1}\in\calO_L^\times,\qquad
h_{i,j}\in \calO_L \ \text{ for all }(i,j)\neq(n,1),\ }
\]
as claimed.
\end{proof}

\begin{thm}
Let $G=\GL_{n+m}$ and $M=\GL_n\times\GL_m$. Consider
\[
b=\diag(\tau_n,\tau_m),\qquad \mu=(2,0,\ldots,0).
\]
Then the reduction to Levi morphism
\[
\beta:\ X^G_{\mu}(b)\ \longrightarrow\ X^M_{\mu_M}(b)
\]
is a Zariski trivial bundle. Moreover, each fiber has dimension
\[
\dim_{\bar\kappa}\,\beta^{-1}(A\oplus B)\;=\;\min\{n,m\}
\quad\text{for every }(A\oplus B)\in X^M_{\mu_M}(b).
\]
When $n\neq m$, we have
\[
\ \beta^{-1}(A\oplus B)\ \cong\ \mathbb{G}_m\ \times\ \mathbb{A}^{\,\min\{n,m\}-1}\ 
\]
canonically over $\bar\kappa$.
\emph{Heuristically:} the unique $t$-adic unit pivot forced by the
Smith-normal-form condition contributes the $\mathbb{G}_m$-factor, while the remaining
$\min\{n,m\}-1$ free principal-part coefficients contribute the affine factors.
\end{thm}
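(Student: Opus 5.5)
The plan is to reduce the fiber to the system of congruences in the Claim above and solve it by the loop-tracing method of Theorem~\ref{thm:two-block triviality}. Over a point $(A\oplus B)\bmod K$, after the normalization $M_1=\tau_n$, $M_2=\tau_m$, the fiber is
\[
\{\,C'\in L^{n\times m}\;:\;h_{n,1}\in\calO_L^\times,\ h_{i,j}\in\calO_L\ ((i,j)\neq(n,1))\,\}\big/\calO_L^{n\times m},
\qquad h=\tau_n\sigma(C')-C'\tau_m .
\]
One first has to check that the Claim is also sufficient, not just necessary. The sufficiency step would run the Smith normal form count of Lemma~\ref{lem:smith normal form criterion} in reverse. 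Integrality of $h$ gives $v(\Delta_k)\ge 0$. The special minor (d) gives $v(\Delta_{n+m-1})=0$. Finally $v(\det)=2$ is automatic from $\det\tau_n\det\tau_m$. Sufficiency would then follow.

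Compared with the $\mu=(1,1,0,\dots)$ case, the entrywise relations are unchanged except at $(n,1)$. There the old condition $h_{n,1}\in t\calO_L$ becomes $h_{n,1}=t\sigma(c_{1,1})-t\,c_{n,m}\in\calO_L^\times$. Equivalently, $\sigma(c_{1,1})-c_{n,m}\in t^{-1}\calO_L$ with nonzero image in $t^{-1}\calO_L/\calO_L$.

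The loop decomposition of the index torus $\bbZ/n\times\bbZ/m$ under the shift $(-1,-1)$ is exactly as before. The loops not passing through the edge $c_{1,1}\to c_{n,m}$ are handled verbatim by Lemma~\ref{lem:cyclic-tree}. Taking $n<m$ without loss of generality, the loop through $c_{1,1}$ has a modified edge: its arrow is now of type ``$\sigma$ up to $t^{-1}\calO_L$, with a unit constraint''. I would treat this edge as a new $S$-type insertion, that is, a chain opening a $t^{-1}\calO_L/\calO_L$ class on the $W$-side. Hypothesis (4) still forces the remaining $W$-entries of that loop into $\calO_L$, because the needed $t\sigma$ arrow between $W$'s occurs at some $c_{1,j}$ with $1<j\le m-n+1$ and avoids $(n,1)$. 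Tracing backwards, $c_{n,m}$ lies in $\calO_L$. Hence the entry $c_{1,1}$ must satisfy $\sigma(c_{1,1})\in t^{-1}\calO_L\setminus\calO_L$. This gives one coordinate $a\in\bar\kappa^\times$ (the $t^{-1}$-coefficient), which is propagated along the $\sigma$-arrows of its chain. It is a $\bbG_m$ rather than an $\bbA^1$ because of the unit condition.

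The $n-1$ free classes from the $S_i$ of Theorem~\ref{thm:two-block triviality} persist unchanged, with coordinates $c_{1,m},\dots,c_{n-1,m}$. The total is $\bbG_m\times\bbA^{n-1}$, of dimension $\min\{n,m\}$. Global triviality follows as in Section~3: $M_1,M_2$ are constant over the zero-dimensional base $X^M_{\mu_M}(b)\cong J_b/(J_b\cap K)$, so the coordinates are uniform in the base point.

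The main obstacle is the interaction of the new $t^{-1}$-class at $c_{1,1}$ with the rest of its loop. I must check two things. First, the propagation of $a$ around the loop does not meet a $t\sigma$ arrow that would force a consistency equation such as $\sigma^r(a)=a$, which would make the factor discrete instead of $\bbG_m$. Second, it does not collide with an existing $S_i$-parameter. My first attempt would be to place the $\bbG_m$ coordinate on the chain between the modified edge and the next $t\sigma$ arrow into a $W$. I would then verify directly, using $\gcd(n,m)$ and the strips $2\le j\le m-n+1$, that this chain is killed, and not closed up, after leaving it. If this fails, the fallback is to compute the $\GL_3$ case $(n,m)=(1,2)$ and the $\GL_5$ case $(n,m)=(2,3)$ explicitly, and to adjust which $S$-class absorbs the unit.
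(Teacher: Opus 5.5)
\textbf{The gap is in the step that produces the $\mathbb G_m$ factor.} Your reduction to the Claim, your sufficiency check and your triviality argument over the zero-dimensional base are fine.

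You assert three things: that hypothesis (4) still forces the $W$-entries of the special loop into $\mathcal O_L$, that in particular $c_{n,m}\in\mathcal O_L$, and that the unit class sits at $c_{1,1}$ with $\sigma(c_{1,1})\in t^{-1}\mathcal O_L\setminus\mathcal O_L$. This configuration has no solutions.

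Every edge of that loop other than $c_{1,1}\to c_{n,m}$ is one of the homogeneous congruences of Section~3. Following them forward from $c_{n,m}$, the pole order of the $W$-entries never increases:
\begin{itemize}
\item $\sigma$-arrows preserve it;
\item $t\sigma$-arrows lower it;
\item an $S$-detour opened by a $t^{-1}\sigma$-arrow out of an integral entry is closed by a $t\sigma$-arrow landing on an integral entry.
\end{itemize}
The last arrow, into $c_{1,1}$, is the $\sigma$-arrow from $c_{2,2}$ (or the $t\sigma$-arrow from $c_{1,2}$ if $n=1$). So $c_{n,m}\in\mathcal O_L$ forces $c_{1,1}\in\mathcal O_L$. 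Then $h_{n,1}=t(\sigma(c_{1,1})-c_{n,m})\in t\mathcal O_L$, which contradicts the unit condition.

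Concretely, for $(n,m)=(2,3)$, $c_{2,3}\in\mathcal O_L$ gives $c_{1,2},c_{2,1}\in\mathcal O_L$ and $c_{1,3}\in t^{-1}\mathcal O_L$. It then gives $c_{2,2}\equiv t\sigma(c_{1,3})\in\mathcal O_L$ and $c_{1,1}\equiv\sigma(c_{2,2})\in\mathcal O_L$.

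The underlying error is running the loop-closure argument of Lemma~3.3 on a loop that contains an inhomogeneous edge. Closing such a loop gives an inhomogeneous equation, not integrality. The $t\sigma$-arrow you invoke is what \emph{confines} the injected pole, not what excludes it.

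\textbf{What is needed (and what the paper does).} Compose the homogeneous part of the loop:
$c_{1,1}\equiv t^{K}\sigma^{\ell-1}(c_{n,m}) \pmod{\mathcal O_L}$ with $K\ge 1$. Here $K\ge1$ because the $t\sigma$-arrow out of $c_{1,m-n+1}$ does not close an $S$-chain. Substitute this into the unit condition $\sigma(c_{1,1})-c_{n,m}\in t^{-1}\mathcal O_L^\times$. You get $t^{K}\sigma^{\ell}(c_{n,m})-c_{n,m}\in t^{-1}\mathcal O_L^\times$, which forces $v(c_{n,m})=-1$, and then $c_{1,1}\in\mathcal O_L$.

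So the $\bar\kappa^\times$-coordinate is the class of $c_{n,m}$, and indeed $\bar h_{n,1}=-\overline{t\,c_{n,m}}$. This class is carried by $\sigma$-arrows along the long diagonal $c_{n,m},\dots,c_{1,m-n+1}$. It is killed by the $t\sigma$-arrow into $c_{n,m-n}$. After that the loop behaves exactly as in Theorem~\ref{thm:two-block triviality}, so the classes of $c_{1,m},\dots,c_{n-1,m}$ remain free and independent of it.

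The ``first attempt'' in your last paragraph, placing the coordinate on the chain just after the modified edge, is in fact the right location. But it contradicts the body of your argument, which as written does not establish the $\mathbb G_m$ factor.
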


\begin{proof}
Keeping all the notations above, a point of the fiber of $\beta$ over $(A\oplus B)\bmod K$ is represented by
\[
g=\begin{pmatrix}A&0\\0&B\end{pmatrix}\begin{pmatrix}I_n&C\\0&I_m\end{pmatrix}K,\qquad 
h=\text{(1,2)-block of }g^{-1}b\sigma(g)=\tau_n\,\sigma(C)-C\,\tau_m.
\]

\smallskip
\noindent\emph{Congruence equations and the arrow diagram.}
Write $C=(c_{i,j})_{1\le i\le n,\,1\le j\le m}$, then the Smith normal form criterion for $Kt^\mu K$ with $t^\mu=\diag(t^2,1,\ldots,1)$
forces one extra \emph{negative} valuation among the off-block relations; this
appears only in the wrap relation linking the two corners:
\[
\sigma(c_{1,1})-c_{n,m}\ \in\ t^{-1}\,\calO_L^{\times}.
\tag{$\star$}
\]

\smallskip
\noindent\emph{Loop tracing and the special loop.}
As in Section~3, we do the loop tracing on $C$,
so directed cycles are orbits of the translation $(i,j)\mapsto(i-1,j-1)$ on the
torus $\Bbb Z/n\times\Bbb Z/m$; there are $d=\gcd(n,m)$ disjoint loops.  The
loop containing the special edge $c_{1,1}\to c_{n,m}$ closes after
$\ell=\text{lcm}(n,m)$ steps.  Chasing along that loop (composing the edge labels)
gives
\[
c_{1,1}\;=\;t^{K}\,\sigma^{\,\ell-1}(c_{n,m})\qquad\text{for some integer }K.
\]
When $n \neq m$, we have seen in Section~3 that $K(m-n) > 0$. Without loss of generality, assume $n < m$ and hence $K> 0$.  Applying $\sigma$
and comparing with \((\star)\),
\[
\sigma(c_{1,1})-c_{n,m}
\;=\;t^{K}\,\sigma^{\ell}(c_{n,m})-c_{n,m}\ \in\ t^{-1}\calO_L^{\times},
\]
forces
\[
c_{n,m}\ \in\ t^{-1}\calO_L^{\times}.
\]
Thus the \emph{principal part} of $c_{n,m}$ is a unit of valuation $-1$.  This
unit determines the entire chain
\[
c_{n,m},\ c_{n-1,m-1},\ \ldots,\ c_{1,m-n+1}
\]
along the long off-diagonal in the $n\times m$ block (assuming $n< m$; the
other case is symmetric), but contributes only \emph{one} free parameter: the
class of $c_{n,m}$ modulo $\calO_L$, i.e.
\[
t^{-1}\calO_L^{\times}\big/\calO_L\ \cong\ \bar\kappa^{\times}\ \cong\ \mathbb{G}_m.
\]
The long off-diagonal and the other strict off-diagonals contained in this loop serve as free parameters for this loop , while the remaining entries on
this loop lie in $\calO_L$.

\smallskip
\noindent\emph{Free parameters on strict off-diagonals.}
Every \emph{strict} off-diagonal in $C$ behaves exactly as in Section~3.  The loop
tracing there shows that each such strict off-diagonal contributes one free
parameter in $t^{-1}\calO_L/\calO_L\cong\bar\kappa$, and different strict
off-diagonals contribute \emph{independently}.  The number of these strict
off-diagonals is $\min\{n,m\}-1$; hence they contribute an affine factor
$\mathbb{A}^{\min\{n,m\}-1}$.

\smallskip
\noindent\emph{Dimension count and triviality.}
Collecting the parameters:
\[
\text{(i) one multiplicative parameter)}\ \ \bar\kappa^{\times}\ \cong\ \mathbb{G}_m,
\qquad
\text{(ii) } \min\{n,m\}-1\ \text{ additive parameters)}\ \ \mathbb{A}^{\min\{n,m\}-1}.
\]
Therefore
\[
\dim_{\bar\kappa}\,\beta^{-1}(A\oplus B)=1+(\min\{n,m\}-1)=\min\{n,m\},
\]
and when $n\ne m$ we obtain a canonical identification
\[
\beta^{-1}(A\oplus B)\ \cong\ \mathbb{G}_m\times \mathbb{A}^{\,\min\{n,m\}-1}.
\]
The same loop-solving shows that the coordinates of the fiber depend
\emph{algebraically} and \emph{trivially} on the base point $(A\oplus B)$ (the
base is zero-dimensional in this Drinfeld case), hence $\beta$ is a Zariski
trivial bundle.

\end{proof}
\medskip
\begin{ex}

Take $n=2$, $m=3$, so
\[
G=\GL_{5},\qquad M=\GL_{2}\times\GL_{3},\qquad
b=\diag(\tau_{2},\tau_{3}),\qquad \mu=(2,0,0,0,0).
\]
Keeping notations above, entrywise one has
\[
\begin{aligned}
h_{1,1}&=\sigma(c_{21})-t\,c_{13},&
h_{2,1}&=t\,\sigma(c_{11})-t\,c_{23},\\
h_{1,2}&=\sigma(c_{22})-c_{11},&
h_{2,2}&=t\,\sigma(c_{12})-c_{21},\\
h_{1,3}&=\sigma(c_{23})-c_{12},&
h_{2,3}&=t\,\sigma(c_{13})-c_{22}.
\end{aligned}
\]
By the Smith normal form criterion for $Kt^\mu K$ with
$t^\mu=\diag(t^2,1,1,1,1)$, one has
\[
\sigma(c_{11})-c_{23}\in t^{-1}\calO_L^\times,\qquad
\text{and all other entries of }h\text{ are integral.}
\tag{$\ast$}
\]

\medskip
\noindent\emph{Loop tracing.}
As in \S3, encode the relations by arrows of type
$t^{-1}\sigma$ and $\sigma$.  There are
$\gcd(2,3)=1$ loops on the $2\times3$ torus, and the unique loop contains the
edge determined by $(\ast)$.  Chasing once around the loop (length
$\mathrm{lcm}(2,3)=6$) shows that
\[
c_{11}=t\,\sigma^{5}(c_{23})
,
\]
hence from $(\ast)$ we obtain
\[
t \sigma^6(c_{23})-c_{23}\in\ t^{-1}\calO_L^\times
\ \Longrightarrow\ 
c_{23}\in t^{-1}\calO_L^\times.
\]
Thus the class of \[u:=\overline{c_{23}}\ \in t^{-1}\calO_L^\times/\calO_L\cong\bar\kappa^\times\]
is a \emph{multiplicative} free parameter (the $\mathbb{G}_m$-factor).  The remaining
strict off-diagonals contribute exactly one \emph{additive} parameter;
equivalently, we may take, for instance,
\[
v:=\overline{c_{13}}\ \in\ t^{-1}\calO_L/\calO_L\ \cong\ \bar\kappa,
\]
while all other entries of $C$ are uniquely determined by $(u,v)$ via the
congruences.  Consequently,
\[
\beta^{-1}(A\oplus B)\ \cong\ \mathbb{G}_m\times\mathbb{A}^{1}
\quad\text{and}\quad
\dim_{\bar\kappa}\beta^{-1}(A\oplus B)=2=\min\{2,3\}.
\]

\noindent
This realizes concretely the theorem in the case $G=\GL_{5}$, $M=\GL_{2}\times\GL_{3}$.

\end{ex}

\section{The Non-Drinfeld Minuscule Case}
\subsection{Introduction}

In this section we initiate the extension of the triviality statement for the reduction to Levi map to the minuscule, but possibly the non-Drinfeld situation. Our tools are surprisingly simple and indeed serve as an optimization of the previous methods: rather than the previous concrete computation of the matrices euqations, we develop a lattice–theoretic and $\sigma$–difference–equation framework to analyze fibers of the reduction–to–Levi morphism $\beta$ on affine Deligne–Lusztig varieties (ADLV) beyond the classical Drinfeld setting, focusing on the minuscule but possibly non–Drinfeld case. In this case, we start with the group: $G=\GL_{n+m}$ and a two–block Levi $M=\GL_n\times\GL_m$ of it, and recall that the admissibility condition gives us an equation:
\[
\widetilde{f}(C)=M_1\sigma(C)-C\,M_2\in\Lambda_{\mathrm{ref}},
\]
 with a fixed lattice target $\Lambda_{\mathrm{ref}}$, which depends on the base point in $X^M_{\mu_M}(b)$. Using the admissible/ind--admissible formalism of Görtz–Haines–Kottwitz–Reuman (Chs.\ 3–4), we recast fibers as lattice preimages under $\sigma$–difference operators and control their dimensions via the slope invariant $d(V,\Phi)$.   Consequently, we prove that each fiber is an affine space precisely when the induced $F$–space has no summand of slope $0$.

The method extends inductively to $N$–block Levis by reducing mixed terms to uniform two–block problems. In \cite[Prop.~5.6.1, (3)]{GHKR06}, it is proved that the fibers of $\beta$ are actually equi-dimensional, so it indeed implies that under certain non-degenerate conditions, the map $\beta$ is a locally trivial affine bundle, which to some extent generalizes Theorem \ref{thm:HN‐triviality‐general}.

\subsection{Related work}
In Chs.\,3–4 of Görtz–Haines–Kottwitz–Reuman \cite{GHKR06}, the authors develop a lattice–theoretic language for subsets of $F$-spaces: \emph{admissible} and \emph{ind–admissible} sets are defined via Zariski locally closed conditions on quotients of lattices, yielding a \emph{lattice–relative} dimension theory. For an $F$–space $(V,\Phi)$ (finite–dimensional $L$–vector space with $\sigma$–linear bijection), the $\sigma$–difference operator $f:=\Phi-\mathrm{id}$ is studied through its slope decomposition; the \emph{defect} $d(V,\Phi)=\sum_{\lambda<0}\lambda\cdot\dim_L V_\lambda$ governs uniformly the size of preimages against lattice targets: for every lattice $\Lambda'\subset V$, $(\Phi-\mathrm{id})^{-1}(\Lambda')$ is ind–admissible and satisfies the relative dimension identity $\dim(\Phi-\mathrm{id})^{-1}(\Lambda')-\dim\Lambda' = d(V,\Phi)$; moreover, there are uniform image bounds modulo $t^l$ and explicit kernel–dimension formulas that control special fibers. We shall recast the off–diagonal congruence conditions appearing in our problem into this framework, and then prove a new theorem that establishes the precise affine space structure of the fibers in the minuscule, but possibly non–Drinfeld setting, thereby advancing the main reduction–to–Levi analysis. First let's recall our 2-block Levi setting:\\
Let
\[
G=\GL_{n+m},\qquad M=\GL_n\times\GL_m,
\]
and let
\[
b=\diag(b_1,b_2)\in M(L),\qquad
b_1=\tau_n^{d_1},\ \ b_2=\tau_m^{d_2},\qquad  (d_1,n) \neq (d_2,m),
\]
so that we don't even assume that each $b_i$ is superbasic but only that $b$ is not basic in $G$. Denote by $\mu_M$ the unique $M$-minuscule and $M$-dominant cocharacter corresponding to $b$, and set
\[
\mu=(\mu_M)_{\mathrm{dom}}.
\]
Write the reduction to Levi map
\[
\beta:\ X^G_\mu(b)\longrightarrow X^M_{\mu_M}(b)\cong X^{\GL_n}_{\mu_1}(b_1)\times X^{\GL_m}_{\mu_2}(b_2).
\]

Fix $(A,B)\bmod K\in X^M_{\mu_M}(b)$ and represent a point in the fiber of $\beta$ over $(A\oplus B)\bmod K$ by
\[
g=\begin{pmatrix}A&0\\0&B\end{pmatrix}
\begin{pmatrix}I_n&C\\0&I_m\end{pmatrix}K,\qquad C\in M_{n\times m}(L).
\]
Set
\[
M_1:=A^{-1}b_1\sigma(A),\qquad M_2:=B^{-1}b_2\sigma(B),
\]
so that
\[
g^{-1}b\,\sigma(g)=
\begin{pmatrix}
M_1 & H(C)\\
0   & M_2
\end{pmatrix},\qquad
H(C):=M_1\,\sigma(C)-C\,M_2.
\]
\noindent
In this two--block setting the fiber of $\beta$ over $(A\oplus B)\bmod K$ is cut out by the single
off–diagonal congruence $H(C)\in\Lambda_{A,B}$, where $\Lambda_{A,B}$ is the lattice determined by admissibility condition. Let $\Lambda_0 = M_{n\times m}(\mathcal O_L)$, and we rewrites this as a
\emph{fixed} $\sigma$–linear equation with a \emph{fixed} lattice target: after replacing the target lattice
by a column Hermite normal form $\Lambda_{A,B}M^{-1}_2=P\Lambda_0$, the condition becomes
\[
P^{-1}\bigl(M_1\sigma(C)M^{-1}_2-C\bigr)\ \in\ \Lambda_0,
\]
with $P^{-1}$ upper triangular. Viewing $(M_{n\times m}(L), M_1\sigma(.)M^{-1}_2)$ as an $F$-space, then it is isomorphic to the $F$-space $(M_{n\times m}(L), b_1\sigma(.)b^{-1}_2)$, which has a unique non-zero slope $\frac{d_1}{n}-\frac{d_2}{m}$. We are going to prove next that for this type of $F$-spaces, the following quotient should always be an affine space:
\[
\{\,C\in M_{n\times m}(L)\mid P^{-1}H(C)M^{-1}_2\in\Lambda_0\,\}\big/\ M_{n\times m}(\mathcal O_L).
\]
Consequently, it provides a uniform
geometric invariant for the fibers of $\beta$ in the two–block case,
which is the key inductive step for handling general $N$–block Levis.

\subsection{Main results} In the fiber computation for the reduction-to-Levi map, the defining conditions
are reduced to Frobenius-twisted lattice equations of the form
\[
(\Phi-\mathrm{id})(X)\in \Lambda,
\]
where \((V,\Phi)\) is an isocrystal coming from the difference of the Newton
slopes of two blocks. Thus the geometry of the fiber is controlled by quotients
of the form
\[
f^{-1}(\Lambda)/\Lambda_0,
\qquad f:=\Phi-\mathrm{id}.
\]
Although \(f\) is additive, it is not \(\mathcal O_L\)-linear, since it contains
Frobenius. Therefore one cannot immediately regard these quotients as ordinary
linear quotients of lattices.

There is a minor asymmetry depending on the sign of the slope. If the slope of
\(\Phi\) is positive, then \(\Phi\) is topologically nilpotent on bounded
lattices, and the inverse of \(1-\Phi\) is given by the convergent series
\[
(1-\Phi)^{-1}=1+\Phi+\Phi^2+\cdots.
\]
This involves only nonnegative powers of Frobenius, so the induced map is an
isomorphism of ordinary \(\bar\kappa\)-schemes on finite-dimensional quotients.

If the slope is negative, the inverse is instead expressed using
\[
\Phi^{-1},\Phi^{-2},\dots,
\]
and therefore involves inverse Frobenius. Hence one should not expect an
ordinary scheme-theoretic isomorphism induced by \(f^{-1}\). Nevertheless, we will see the
quotients remain affine: after passing to perfection the same formal
isomorphism holds, and on the ordinary level affineness follows from the fact
that the relevant Frobenius-additive inverse images are finite radicial
modifications of affine spaces. The following result from algebraic geometry gives a formal criterion for the negative slope case.
\begin{lem}
Let \(k\) be an algebraically closed field of characteristic \(p > 0\), and let
\[
f:\mathbb A^n_k\to \mathbb A^n_k
\]
be a homomorphism of algebraic groups \(\mathbb G_a^n\to \mathbb G_a^n\), given by an \(n\)-tuple of additive polynomials
\[
f_j(x_1,\dots,x_n)
=
\sum_{i=1}^n\sum_{\ell=0}^{N_j}
a_{j,i,\ell} x_i^{p^\ell},
\qquad 1\leq j\leq n,
\]
Assume that \(f\) is finite and bijective on \(k\)-points. Equivalently, \(f\)
is a finite radicial isogeny of \(\mathbb G_a^n\).

Then for every \(k\)-linear subspace
\[
W\subset \mathbb A^n_k,
\]
the reduced inverse image
\[
f^{-1}(W)_{\mathrm{red}}
\]
is isomorphic, as a \(k\)-variety, to an affine space of dimension \(\dim W\):
\[
f^{-1}(W)_{\mathrm{red}}\cong \mathbb A^{\dim W}_k.
\]
\end{lem}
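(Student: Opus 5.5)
The plan is to view everything as algebraic groups and reduce to a structure theorem for commutative unipotent groups. Since each \(f_j\) is an additive polynomial, \(f\) is a homomorphism of group schemes \(\mathbb G_a^n\to\mathbb G_a^n\). A \(k\)-linear subspace \(W\subset\mathbb A^n_k\) is a closed subgroup isomorphic to \(\mathbb G_a^{d}\), where \(d=\dim W\), and the quotient \(\mathbb G_a^n/W\) is isomorphic to \(\mathbb G_a^{n-d}\) via a linear projection \(\pi\). Then \(f^{-1}(W)=\ker(\pi\circ f)\) is a closed subgroup scheme of \(\mathbb G_a^n\). Put \(H:=f^{-1}(W)_{\mathrm{red}}\). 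Because \(k\) is perfect, \(H\) is again a closed subgroup scheme, and being reduced it is smooth. So \(H\) is a smooth closed commutative subgroup of \(\mathbb G_a^n\), and in particular it is killed by \(p\).

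Next I will show that \(H\) is connected of dimension \(d\). The restriction \(f|_H\colon H\to W\) is finite, because \(f\) is finite and \(H\) is closed. It is bijective on \(k\)-points, since \(f\) is bijective on \(k\)-points and \(H(k)=f^{-1}(W)(k)\). A finite morphism of varieties that is bijective on points over an algebraically closed field is a homeomorphism: it is closed, continuous and bijective, and bijectivity on closed points upgrades to all points by finiteness, i.e.\ \(f\) is radicial. Hence \(H\) is homeomorphic to \(W\cong\mathbb A^d\). Therefore \(H\) is irreducible, hence connected, and \(\dim H=\dim W=d\), since finite surjective morphisms preserve dimension.

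To conclude, I will invoke the structure theorem for unipotent groups: over a perfect field, a smooth connected commutative algebraic group of exponent \(p\) is isomorphic to a vector group \(\mathbb G_a^d\) (Serre, \emph{Groupes algébriques et corps de classes}, or Demazure--Gabriel IV.3.6.6). The group \(H\) is unipotent as a closed subgroup of \(\mathbb G_a^n\), it is commutative, it is killed by \(p\), and it is smooth and connected by the previous step. So \(H\cong\mathbb G_a^d\) as groups, and in particular \(H\cong\mathbb A^d_k\) as varieties, which is the claim.

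I expect the main obstacle to be the last step, if one prefers a self-contained argument to citing the structure theorem. The route I would try first is induction on \(d\). Choose a coordinate projection \(p_i\colon H\to\mathbb G_a\) with nonzero image; the image is a smooth connected closed subgroup of \(\mathbb G_a\), hence all of \(\mathbb G_a\). Then \((\ker p_i)^0_{\mathrm{red}}\) is a smaller smooth connected \(p\)-torsion subgroup, and it is \(\cong\mathbb G_a^{d-1}\) by induction. The remaining difficulty is the extension problem \(0\to\mathbb G_a^{d-1}\to H\to\mathbb G_a\to0\), which requires showing that \(p\)-torsion extensions of \(\mathbb G_a\) by \(\mathbb G_a^{d-1}\) split over a perfect field. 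This is exactly the content of the cited theorem. I would rely on the citation rather than reprove it, and make sure to handle the possible component group of \(\ker p_i\) by passing to identity components as above.
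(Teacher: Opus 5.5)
Your proof is correct, and up to the last step it is the paper's argument. Both of you view \(f^{-1}(W)\) as a closed subgroup scheme of \(\mathbb G_a^n\). Both use that it is finite and bijective over \(W\), hence homeomorphic to \(W\), to get irreducibility and dimension \(\dim W\). Both use perfectness of \(k\) to make the reduced subscheme a smooth connected unipotent subgroup.

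The routes differ only in how this group is identified with affine space.

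The paper uses the general fact that a smooth connected unipotent group over an algebraically closed field is split, i.e. has a composition series with quotients \(\mathbb G_a\). It then inducts on dimension: each stage is a \(\mathbb G_a\)-torsor over a base already known to be an affine space, hence trivial because \(H^1(\mathbb A^m,\mathcal O)=0\). This gives \(\mathbb A^d\) only as a variety, but uses neither commutativity nor \(p\)-torsion.

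You instead use that \(H\) is commutative and killed by \(p\), and cite the structure theorem for such groups over perfect fields. This gives the stronger statement \(H\cong\mathbb G_a^d\) as algebraic groups. The price is a more specialized citation, one that genuinely needs \(k\) perfect, which you have.

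The extension problem you flag as the obstacle to a self-contained proof can be avoided for the statement as posed. Quotient by a one-dimensional closed subgroup \(\mathbb G_a\subset H\) rather than projecting onto a \(\mathbb G_a\). Then \(H\to H/\mathbb G_a\) is a \(\mathbb G_a\)-torsor over a \((d-1)\)-dimensional smooth connected unipotent group, which is \(\cong\mathbb A^{d-1}\) by induction. The torsor is therefore trivial, and \(H\cong\mathbb A^{d}\) as varieties without splitting any group extension. This is exactly the paper's mechanism.
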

\begin{proof}
Since \(f\) is a homomorphism of algebraic groups and \(W\subset\mathbb G_a^n\)
is a subgroup, the inverse image
\[
X:=f^{-1}(W)
\]
is a closed subgroup scheme of \(\mathbb G_a^n\).

Because \(f\) is finite radicial and \(W\to\mathbb A^n_k\) is a closed immersion,
the base change
\[
X=f^{-1}(W)\to W
\]
is again finite radicial. In particular, \(X\to W\) is a universal homeomorphism.
Hence \(X\) is connected and irreducible, and
\[
\dim X=\dim W.
\]

Now pass to the reduced subscheme \(X_{\mathrm{red}}\). Since \(k\) is perfect
and \(X_{\mathrm{red}}\) is a reduced group scheme of finite type over \(k\),
\(X_{\mathrm{red}}\) is smooth. Moreover, it is a closed connected subgroup of
\(\mathbb G_a^n\), hence a smooth connected unipotent group.

Over an algebraically closed field, every smooth connected unipotent group is
split. Therefore \(X_{\mathrm{red}}\) admits a composition series whose successive
quotients are isomorphic to \(\mathbb G_a\). Inducting on the dimension, and
using that every \(\mathbb G_a\)-torsor over affine space is trivial, because
\[
H^1(\mathbb A^m,\mathcal O_{\mathbb A^m})=0,
\]
we get
\[
X_{\mathrm{red}}\cong \mathbb A^{\dim X}_k.
\]
Since \(\dim X=\dim W\), this gives
\[
f^{-1}(W)_{\mathrm{red}}\cong \mathbb A^{\dim W}_k.
\]
\end{proof}
\begin{thm}[Affine quotient for a nonzero-slope simple isocrystal]
Let
\((V,\Phi)\) be a simple \(F\)-isocrystal of dimension \(r\) and nonzero slope
\(s/r\), with \(\gcd(s,r)=1\) and \(s\neq 0\). Choose an \(L\)-basis of \(V\)
such that
\[
\Phi(x_1,\dots,x_r)
=
\bigl(t^s\sigma(x_r),\sigma(x_1),\dots,\sigma(x_{r-1})\bigr).
\]
Let
\[
f:=\Phi-\mathrm{id}_V,
\qquad
\Lambda_0:=\mathcal O_L^r,
\]

then \(f(\Lambda_0)\) is an \(\mathcal O_L\)-lattice. Let \(\Lambda\subset V\) be an \(\mathcal O_L\)-lattice, and let \(e\in\mathbb Z\)
be such that
\[
f(t^e\Lambda_0) = t^ef(\Lambda_0)\subset \Lambda.
\]
Then \(f^{-1}(\Lambda)/t^e\Lambda_0\) is affine. 
In particular, if $s > 0$ the map \(f\) induces an isomorphism of
\(\bar{\kappa}\)-schemes
\[
f^{-1}(\Lambda)/t^e\Lambda_0
\;\xrightarrow{\sim}\;
\Lambda/f(t^e\Lambda_0).
\]
\end{thm}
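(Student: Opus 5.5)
Since \(f\) is additive and \(\sigma\)-semilinear, \(f(\Lambda_0)\) is an \(\mathcal O_L\)-lattice and \(f(t^e\Lambda_0)=t^ef(\Lambda_0)\). The plan is to split according to the sign of \(s\), and in both cases reduce to a map between finite-dimensional \(\bar\kappa\)-spaces of the form \(t^{-N}\Lambda_0/t^e\Lambda_0\). First I show that \(f\) is bijective on \(V\), so that \(f^{-1}(\Lambda)\) is a bounded \(\mathcal O_L\)-lattice. Since \(\Phi^r=t^s\sigma^r\) in the chosen basis, the series \(-(1+\Phi+\Phi^2+\cdots)\) converges on bounded sets when \(s>0\) and inverts \(f\). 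When \(s<0\), I write \(f=\Phi(\mathrm{id}-\Phi^{-1})\), and \(\mathrm{id}-\Phi^{-1}\) is inverted by \(\sum_{k\ge0}\Phi^{-k}\). In both cases \(f^{-1}\) is a bounded additive map, so \(f^{-1}(\Lambda)\subset t^{-N}\Lambda_0\) for some \(N\). The hypothesis \(f(t^e\Lambda_0)\subset\Lambda\) gives \(t^e\Lambda_0\subset f^{-1}(\Lambda)\), so the quotient makes sense inside the finite-dimensional space \(t^{-N}\Lambda_0/t^e\Lambda_0\). That space carries the usual affine-space structure via Witt or \(t\)-adic coordinates.

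\emph{Positive slope.} The key point is that \(f\) and its inverse series involve only \(\sigma\) (never \(\sigma^{-1}\)) on the coordinates. Hence \(f\) induces a morphism \(\bar f: t^{-N}\Lambda_0/t^e\Lambda_0\to t^{-N'}\Lambda_0/t^ef(\Lambda_0)\) whose coordinates are additive polynomials in \(p\)-power maps. On bounded quotients the series \(-\sum_{k\ge0}\Phi^k\) truncates to a finite sum, because \(\Phi^k\) raises valuation for \(k\gg0\) by an amount depending only on the lattices. This truncated sum gives a polynomial inverse morphism \(\Lambda/f(t^e\Lambda_0)\to f^{-1}(\Lambda)/t^e\Lambda_0\). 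The two composites are the identity because this is an identity of formal power series, checked modulo \(t^e\Lambda_0\). This yields the stated scheme isomorphism. Finally, \(\Lambda/f(t^e\Lambda_0)\) is a quotient of \(\mathcal O_L\)-lattices, hence a finite-dimensional \(\bar\kappa\)-vector group, i.e.\ an affine space.

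\emph{Negative slope.} I view \(\bar f\) on \(Q:=t^{-N}\Lambda_0/t^e\Lambda_0\cong\mathbb A^n\) as a homomorphism of the additive group \(\mathbb G_a^n\) given by additive polynomials. It is bijective on \(\bar\kappa\)-points since \(f\) is bijective on \(V\). I will choose the target quotient so that the lattices match: take \(f^{-1}(t^{-N'}\Lambda_0)\) as the source. Then \(\bar f\) is finite, because it is a homomorphism between groups of equal dimension with finite kernel, and an isogeny; being bijective, it is radicial. The image \(\Lambda/f(t^e\Lambda_0)\) sits as a linear subspace \(W\) of the target, since \(\Lambda\) is an \(\mathcal O_L\)-lattice. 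The preceding Lemma then gives \((\bar f^{-1}(W))_{\mathrm{red}}\cong\mathbb A^{\dim W}\), and this reduced inverse image is exactly \(f^{-1}(\Lambda)/t^e\Lambda_0\) as a reduced variety.

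\textbf{Main obstacle.} The hard step is making \(\bar f\) an honest morphism between affine spaces of the same dimension, with the correct lattice bookkeeping. In mixed characteristic this matters because the \(t\)-adic coordinates are Witt coordinates, so addition is only polynomial rather than linear, and the Lemma applies only after identifying \(Q\) with a vector group. I would first handle equal characteristic, where \(Q\) is literally \(\mathbb G_a^n\). In mixed characteristic I would replace \(\mathbb G_a^n\) by the smooth connected unipotent group \(Q\) and rerun the Lemma's argument, which uses only smoothness, connectedness and unipotence.
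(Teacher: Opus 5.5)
Your proposal follows the paper's proof closely. Bijectivity of $f$ comes from the series $-\sum_{k\ge0}\Phi^k$ for $s>0$ or $\sum_{k\ge1}\Phi^{-k}$ for $s<0$. For $s>0$ a truncated series gives a polynomial inverse on finite-dimensional lattice quotients. For $s<0$ the finite-radicial-isogeny lemma is applied to the induced homomorphism $\bar f$ with $W=\Lambda/f(t^e\Lambda_0)$. For the bookkeeping you call the main obstacle, the paper takes $\bar f\colon t^{-e'}\Lambda_0/t^e\Lambda_0\to t^{-e'}f(\Lambda_0)/t^ef(\Lambda_0)$ with $\Lambda\subset t^{-e'}f(\Lambda_0)$, which is cleaner than passing through $f^{-1}(t^{-N'}\Lambda_0)$. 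You also note that in mixed characteristic these quotients are truncated-Witt-type groups rather than $\mathbb G_a^n$, and that the lemma's proof only needs a smooth, connected, unipotent ambient group. This addresses a point the paper passes over.

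The one step that is not correct as written is your opening justification that $f(\Lambda_0)$ is an $\mathcal O_L$-lattice. The map $f=\Phi-\mathrm{id}$ is not $\sigma$-semilinear; only $\Phi$ is. It is merely $F$-linear, so the image of an $\mathcal O_L$-lattice under $f$ is in general only an $\mathcal O_F$-module. For instance, take $r=2$, $s=1$ and $\Lambda'=\{(a,a+tb):a,b\in\mathcal O_L\}$. Then $t\Lambda_0=f(t\Lambda_0)\subset f(\Lambda')$, and $f(\Lambda')/t\Lambda_0=\{(-\bar a,\sigma(\bar a)-\bar a):\bar a\in\bar\kappa\}$, which is not a $\bar\kappa$-subspace.

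This matters because the $\mathcal O_L$-module structure of $f(t^e\Lambda_0)$ is what makes $\Lambda/f(t^e\Lambda_0)$ a vector group for $s>0$ and a linear subspace $W$ for $s<0$. You therefore need the explicit computation the paper does:
\begin{itemize}
\item For $s>0$: $\Phi(\Lambda_0)\subset\Lambda_0$ and $-\sum_{k\ge0}\Phi^k$ preserves $\Lambda_0$, so $f(\Lambda_0)=\Lambda_0$.
\item For $s<0$: $\Lambda_0\subset\Phi(\Lambda_0)=t^s\mathcal O_L\oplus\mathcal O_L^{r-1}$ gives $f(\Lambda_0)\subset\Phi(\Lambda_0)$. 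Conversely, $\sum_{k\ge1}\Phi^{-k}$ maps $\Phi(\Lambda_0)$ into $\Lambda_0$ because $\Phi^{-1}(\Lambda_0)\subset\Lambda_0$. Hence $f(\Lambda_0)=\Phi(\Lambda_0)$.
\end{itemize}
The identity $f(t^e\Lambda_0)=t^ef(\Lambda_0)$ is fine, since $\sigma(t)=t$. With this repair your argument goes through.
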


\begin{proof}
We firstly claim  that $f$ is an $F$-linear isomorphism. Using the basis we chose, it is easy to see that $f$ is $F$-linear and has trivial kernel. By Lang's Theorem, $f$ is surjective, which we can even show directly by constructing the following inverse of $f$. Indeed, suppose \(s>0\). Then, for the standard lattice
\[
\Lambda_0=\mathcal O_L^r,
\]
the explicit formula for \(\Phi\) implies
\[
\Phi^r=t^s\sigma^r
\]
as operators on \(V\). Hence, for every \(m\geq 0\),
\[
\Phi^{mr}(\Lambda_0)=t^{ms}\Lambda_0.
\]
More generally, if \(\Lambda\subset V\) is any bounded subset, then there exists
an integer \(a\) such that
\[
\Lambda\subset t^{-a}\Lambda_0.
\]
Therefore, for every \(m\geq 0\),
\[
\Phi^{mr}(\Lambda)
\subset
\Phi^{mr}(t^{-a}\Lambda_0)
=
t^{-a+ms}\Lambda_0.
\]
Since \(s>0\), the lattices \(t^{-a+ms}\Lambda_0\) tend \(t\)-adically to \(0\)
as \(m\to\infty\). Thus \(\Phi^n\) sends every bounded subset of \(V\) into
arbitrarily deep powers of \(t\), and hence \(\Phi\) is topologically nilpotent
on bounded lattices.\\
\ \\
Consequently, for every \(y\in V\), the series
\[
\sum_{n\geq 0}\Phi^n(y)
\]
converges \(t\)-adically in \(V\). Therefore the operator
\[
1+\Phi+\Phi^2+\cdots
\]
is well-defined on \(V\). Moreover, for every \(N\geq 0\),
\[
(1-\Phi)\left(\sum_{n=0}^N\Phi^n(y)\right)
=
y-\Phi^{N+1}(y).
\]
Letting \(N\to\infty\), and using \(\Phi^{N+1}(y)\to 0\), we obtain
\[
(1-\Phi)\left(\sum_{n\geq 0}\Phi^n(y)\right)=y.
\]
Similarly,
\[
\left(\sum_{n\geq 0}\Phi^n\right)(1-\Phi)(y)=y.
\]
Hence
\[
f^{-1} = -(1-\Phi)^{-1}
=
-(1+\Phi+\Phi^2+\cdots).
\] 
\ \\
If \(s<0\), the same argument applies to
\(\Phi^{-1}\), which is topologically nilpotent
on bounded lattices. Hence
\[
f^{-1} = \Phi^{-1}(1-\Phi^{-1})^{-1}
=
\Phi^{-1}(1+\Phi^{-1}+\Phi^{-2}+\cdots).\]
Thus \(f\) is bijective.\\
\ \\
Because \(f\) is additive and bijective, it restricts to a set bijection:
\begin{equation}
f^{-1}(\Lambda)/t^e\Lambda_0
\;\xrightarrow{\sim}\;
\Lambda/f(t^e\Lambda_0).
\end{equation}
\ \\
Using the $f^{-1}$ we constructed above, one checks that if $s >0$ then $f(\Lambda_0) = \Lambda_0$ and if $s < 0$ then $f(\Lambda_0) = t^{s}\calO_L \oplus \calO^{r-1}_L$, so the right hand side of (7.1) is an affine space.
To finish the proof, it remains to justify that the quotient
\[
f^{-1}(\Lambda)/t^e\Lambda_0
\]
is represented by an affine space, especially in the case \(s<0\), where the
inverse of \(f\) involves inverse Frobenius and hence does not directly give an
isomorphism of ordinary \(\bar\kappa\)-schemes.

Since \(f(\Lambda_0)\) is an \(\mathcal O_L\)-lattice, we may choose an integer
\(e'\gg 0\) such that
\[
\Lambda\subset t^{-e'}f(\Lambda_0).
\]
Equivalently,
\[
\Lambda/f(t^e\Lambda_0)
\]
is a linear subspace of the finite-dimensional \(\bar\kappa\)-vector space
\[
t^{-e'}f(\Lambda_0)/t^ef(\Lambda_0).
\]
On the other hand, since \(f(t^a\Lambda_0)=t^af(\Lambda_0)\) for every
\(a\in\mathbb Z\), the map \(f\) induces an additive Frobenius-polynomial map
between finite-dimensional affine spaces
\[
\overline f:
t^{-e'}\Lambda_0/t^e\Lambda_0
\longrightarrow
t^{-e'}f(\Lambda_0)/t^ef(\Lambda_0).
\]
Moreover, \(\overline f\) is bijective on \(\bar\kappa\)-points, because \(f\)
is bijective on \(V\). It is also a homomorphism of additive algebraic groups,
given in coordinates by additive Frobenius polynomials.

Now observe that
\[
f^{-1}(\Lambda)\subset t^{-e'}\Lambda_0.
\]

We now apply the preceding algebraic-geometric lemma to the additive
Frobenius-polynomial bijection
\[
\overline f:
t^{-e'}\Lambda_0/t^e\Lambda_0
\longrightarrow
t^{-e'}f(\Lambda_0)/t^ef(\Lambda_0)
\]
and to the linear subspace
\[
\Lambda/f(t^e\Lambda_0)
\subset
t^{-e'}f(\Lambda_0)/t^ef(\Lambda_0).
\]
It follows that the reduced inverse image
\[
\overline f^{-1}\left(\Lambda/f(t^e\Lambda_0)\right)_{\mathrm{red}}
\]
is isomorphic to an affine space over \(\bar\kappa\). Hence
\[
f^{-1}(\Lambda)/t^e\Lambda_0
\]
is affine as a variety.

Finally, in the case \(s>0\), the inverse of \(f\) is given by the convergent
series
\[
f^{-1}=-(1+\Phi+\Phi^2+\cdots),
\]
which involves only nonnegative powers of Frobenius. Hence the induced inverse
is a morphism of ordinary \(\bar\kappa\)-schemes on the finite-dimensional
quotients. Therefore in this case \(f\) induces an actual isomorphism of
\(\bar\kappa\)-schemes
\[
f^{-1}(\Lambda)/t^e\Lambda_0
\;\xrightarrow{\sim}\;
\Lambda/f(t^e\Lambda_0).
\]
This proves the theorem.
\end{proof}

\begin{cor}\label{cor:quotient-affine}
Let $(V,\Phi)$ be as above, $f=\Phi-\mathrm{id}$, and let $\Lambda\subset V$ be an arbitrary lattice. For any lattice $\Lambda'\subset f^{-1}(\Lambda)$, the quotient
\[
f^{-1}(\Lambda)\big/\Lambda'
\]
is an affine space over $\bar\kappa$.
\end{cor}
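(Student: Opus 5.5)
The plan is to reduce the corollary to the preceding theorem by sandwiching \(\Lambda'\) between two scaled copies of the standard lattice \(\Lambda_0=\mathcal O_L^r\), and then to use the group structure of the quotients.

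\textbf{Step 1: a lattice deep inside \(\Lambda'\).} Since \(\Lambda'\) is a lattice, we can choose \(e\gg0\) with \(t^e\Lambda_0\subset\Lambda'\). Because \(f(\Lambda_0)\) is a lattice and \(f(t^e\Lambda_0)=t^ef(\Lambda_0)\), enlarging \(e\) further gives
\[
f(t^e\Lambda_0)\subset\Lambda .
\]
This is exactly the hypothesis of the preceding theorem, so \(Y:=f^{-1}(\Lambda)/t^e\Lambda_0\) is an affine space over \(\bar\kappa\). More precisely, the proof of the theorem exhibits \(Y\), as a reduced variety, as a smooth connected unipotent subgroup of the vector group \(t^{-e'}\Lambda_0/t^e\Lambda_0\).

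\textbf{Step 2: the image of \(\Lambda'\).} Since \(\Lambda'\subset f^{-1}(\Lambda)\), the subgroup \(Z:=\Lambda'/t^e\Lambda_0\subset Y\) is a linear \(\bar\kappa\)-subspace. The reason is that \(\Lambda'\) is an \(\mathcal O_L\)-module, so its image is stable under scalar multiplication by Teichmüller representatives. Hence \(Z\) is a closed subgroup isomorphic to a vector group \(\mathbb G_a^k\). We then have
\[
f^{-1}(\Lambda)/\Lambda' \;\cong\; Y/Z,
\]
and the task becomes showing that the quotient of a split unipotent group \(Y\cong\mathbb A^N\) by the closed connected subgroup \(Z\) is an affine space.

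\textbf{Step 3: the quotient \(Y/Z\).} Both \(Y\) and \(Z\) are commutative, so \(Y/Z\) exists as an algebraic group. It is smooth, connected and unipotent, being a quotient of such a group. Over the algebraically closed field \(\bar\kappa\) it is therefore split, and by the same composition-series argument used in the preceding lemma (every \(\mathbb G_a\)-torsor over an affine space is trivial, since \(H^1(\mathbb A^m,\mathcal O)=0\)) it is isomorphic as a variety to \(\mathbb A^{\dim Y-\dim Z}\). The map \(Y\to Y/Z\) is a \(Z\)-torsor over an affine base, hence trivial, so this quotient is indeed the geometric quotient \(f^{-1}(\Lambda)/\Lambda'\).

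\textbf{Main obstacle.} The step needing the most care is making sure that the ind-scheme structure on \(f^{-1}(\Lambda)/\Lambda'\) (à la GHKR) agrees with the group quotient \(Y_{\mathrm{red}}/Z\). This matters in the negative-slope case, where \(Y\) is only known to be affine after passing to the reduced structure. I would handle it by working with reduced (or perfected) schemes throughout, as in the theorem's own conclusion. For \(s>0\) the point is easier: \(f\) directly identifies the quotient with \(\Lambda/f(\Lambda')\), a quotient of finite-dimensional \(\bar\kappa\)-vector spaces, although \(f(\Lambda')\) need not be \(\mathcal O_L\)-stable, so even then I would fall back on the unipotent-quotient argument above.
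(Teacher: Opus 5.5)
Your proposal is correct and follows the paper's route: choose $t^e\Lambda_0\subset\Lambda'$, apply the preceding theorem to $f^{-1}(\Lambda)/t^e\Lambda_0$, and pass to the quotient by the translation action of $\Lambda'/t^e\Lambda_0$. You also check the hypothesis $f(t^e\Lambda_0)\subset\Lambda$, which the paper leaves implicit; it actually holds automatically, since $f(t^e\Lambda_0)\subset f(\Lambda')\subset\Lambda$. Your justification of the final step is more careful than the paper's: you view $Y_{\mathrm{red}}/Z$ as a smooth connected unipotent group, hence split and isomorphic to an affine space, with $Y_{\mathrm{red}}\to Y_{\mathrm{red}}/Z$ a trivial torsor, whereas the paper appeals to coordinates in which $\Lambda'/t^e\Lambda_0$ acts by translation along a linear subspace, and in the negative-slope case such coordinates are only available after an argument like yours.
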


\begin{proof}
Choose \(l\gg 0\) such that
\[
t^l\Lambda_0\subset \Lambda'.
\]
Then there is a natural quotient morphism
\[
f^{-1}(\Lambda)/t^l\Lambda_0
\longrightarrow
f^{-1}(\Lambda)/\Lambda'.
\]
By the preceding theorem, the source is an affine space over \(\bar\kappa\).

Moreover,
\[
\Lambda'/t^l\Lambda_0
\]
is a finite-dimensional affine space over \(\bar\kappa\), and it acts freely on
\[
f^{-1}(\Lambda)/t^l\Lambda_0
\]
by translations. The quotient by this free translation action is precisely
\[
f^{-1}(\Lambda)/\Lambda'.
\]

Since the action is by a vector group on an affine space, the quotient is again
an affine space. Indeed, after choosing coordinates, this is just the quotient
of an affine space by translations along a linear affine subspace. Therefore
\[
f^{-1}(\Lambda)/\Lambda'
\]
is an affine space over \(\bar\kappa\).
\end{proof}

\begin{thm}[Local affine-bundle structure in the two-block case]
In the two-block Levi setting above, suppose we are in the following situation,
which is known from the fiber computation and the dimension formula.

For every geometric point \((A,B)\in X^M_{\mu_M}(b)\), the associated
Frobenius-linear operator
\[
f_{A,B}:M_{n\times m}(L)\longrightarrow M_{n\times m}(L),
\qquad
f_{A,B}(X)=M_1\sigma(X)-XM_2
\]
has no slope-zero part; equivalently, the Hom-isocrystal
\[
\operatorname{Hom}\bigl((L^m,M_2\sigma),(L^n,M_1\sigma)\bigr)
\]
has no slope \(0\). Moreover, the dimension
\[
d:=\dim\bigl(f_{A,B}^{-1}(\Lambda_{A,B})/
\mathcal O_L^{n\times m}\bigr)
\]
is locally constant on \(X^M_{\mu_M}(b)\).

Then the reduction-to-Levi map
\[
\beta:X^G_\mu(b)\longrightarrow X^M_{\mu_M}(b)
\]
is a Zariski locally trivial affine-space bundle of relative dimension \(d\).
That is, for every point \(x\in X^M_{\mu_M}(b)\), there exists a Zariski open
neighborhood \(U\) of \(x\) such that
\[
\beta^{-1}(U)\cong U\times \mathbb A^d
\]
over \(U\).

\end{thm}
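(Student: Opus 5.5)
The proof proceeds in two steps. First we identify each fiber with an affine space, pointwise, using the affine quotient theorem and Corollary~\ref{cor:quotient-affine}. Then we spread this identification over a Zariski neighbourhood using a uniform choice of lattices.

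\emph{Pointwise structure of the fibers.} Fix a geometric point $x=(A,B)$. By the coset parametrization of Section~2, $\beta^{-1}(x)$ is identified with
\[
f_{A,B}^{-1}(\Lambda_{A,B})/\mathcal O_L^{n\times m}.
\]
By hypothesis the Hom-isocrystal has no slope-zero part. We decompose it into simple summands of nonzero slope. Each summand with positive slope is handled by the positive-slope case of the affine quotient theorem, and each summand with negative slope by its negative-slope case. Since the lattice $\Lambda_{A,B}$ need not split along this decomposition, we do not argue summand by summand. Instead we apply the mechanism of that proof directly to $f_{A,B}$ on all of $M_{n\times m}(L)$. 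The map $f_{A,B}$ is bijective, since no slope is zero. Choose $e,e'\gg0$ so that
\[
f_{A,B}^{-1}(\Lambda_{A,B})\subset t^{-e'}\mathcal O_L^{n\times m}
\quad\text{and}\quad
f_{A,B}(t^{e}\mathcal O_L^{n\times m})\subset\Lambda_{A,B}.
\]
Then $f_{A,B}$ induces a finite radicial isogeny between the finite-dimensional quotients. The algebraic-geometric lemma preceding the theorem shows that the preimage of the linear subspace $\Lambda_{A,B}/f_{A,B}(t^e\mathcal O_L^{n\times m})$ is an affine space. Corollary~\ref{cor:quotient-affine} then passes to the quotient by $\mathcal O_L^{n\times m}$. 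This gives $\beta^{-1}(x)_{\mathrm{red}}\cong\mathbb A^{d}$.

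\emph{Local uniformity.} Fix $x$. On a small open affine $U\ni x$, the truncation $e,e'$ and the representatives can be chosen uniformly. The justification is that $X^M_{\mu_M}(b)$ is locally of finite type, so $A,B$ vary in a bounded family with $K$-coset representatives given by regular functions on $U$; in particular $M_1,M_2$ and a basis of $\Lambda_{A,B}$, for instance via the Hermite normal form $P$ of Section~7.2, become matrices of Laurent polynomials in $t$ with coefficients in $\mathcal O(U)$, possibly after shrinking $U$. Then
\[
\mathcal Y:=\{\,C\in t^{-e'}\mathcal O_L^{n\times m}/t^{e}\mathcal O_L^{n\times m}\;:\;f_{A,B}(C)\in\Lambda_{A,B}\,\}
\]
is a closed subgroup scheme of $\mathbb G_a^N\times U$ over $U$, cut out by additive $p$-polynomials with coefficients in $\mathcal O(U)$. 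The constancy of $d$ gives flatness of $\mathcal Y_{\mathrm{red}}$, and $\beta^{-1}(U)$ is the quotient of $\mathcal Y_{\mathrm{red}}$ by the constant vector group $\mathcal O_L^{n\times m}/t^e\mathcal O_L^{n\times m}$ acting by translations. Working in the perfect setting, where $X^G_\mu(b)$ is a perfect scheme, removes the radicial discrepancy, so the reduced structure is harmless. We then build a trivialization by induction along a filtration by $\mathbb G_a$'s. Since smooth connected unipotent group schemes over a reduced base admit, after shrinking, a composition series with $\mathbb G_a$ quotients, we obtain $\beta^{-1}(U)$ as an iterated $\mathbb G_a$-torsor over $U$. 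Shrinking $U$ to be affine, $H^1(U,\mathcal O)=0$ trivializes each step, giving $\beta^{-1}(U)\cong U\times\mathbb A^d$.

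\emph{Main obstacle.} The hard step is the relative version of the lemma: showing that the family of reduced kernels of the additive $p$-polynomial maps is a \emph{smooth} split unipotent group scheme over $U$ near $x$, rather than only fiberwise. My first attempt would be to pass to perfections and use generic smoothness/flatness, shrinking $U$ around $x$. The constancy of $d$ should then allow the splitting chosen at $x$ to be propagated: lift a coordinate basis of the fiber at $x$ to sections over $U$ and check, by upper semicontinuity of fiber dimension, that they stay independent. For positive slope this is easier, because $f^{-1}=-(1+\Phi+\cdots)$ is an honest morphism that depends algebraically on $(A,B)$. In that case the positive-slope isomorphism of the affine quotient theorem, applied in families, directly identifies $\beta^{-1}(U)$ with the linear family $\Lambda_{A,B}/f(t^e\mathcal O_L^{n\times m})$, which is a vector bundle and hence Zariski locally trivial.
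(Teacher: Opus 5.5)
\textbf{Verdict: same route as the paper, with a genuine gap at the relative step.}

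Your outline follows the paper's route: fiberwise use of the nonzero-slope affine-quotient theorem, a uniform truncation over a small $U$, then coordinates varying algebraically. The step you flag as the main obstacle is a real gap, and the tools you propose for it do not close it.

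\emph{Flatness.} Constancy of $d$ does not by itself give flatness of $\mathcal Y_{\mathrm{red}}$. You would need something like a Cohen--Macaulay total space over a regular base.

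\emph{Generic arguments.} Generic smoothness, generic flatness, or spreading out a splitting of the generic fiber only produce a dense open subset of $U$. That open set need not contain the given point $x$.

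\emph{Splitting over a non-perfect base.} Outside the perfect setting even the generic splitting fails. Over $U=\operatorname{Spec}\bar\kappa[s,s^{-1}]$, the kernel of $(x,y)\mapsto y^p-y-sx^p$ is smooth over $U$, and every closed fiber is isomorphic to $\mathbb G_a$ via $z=y-s_0^{1/p}x$. Yet the generic fiber admits no nonconstant polynomial map from $\mathbb A^1$: comparing leading coefficients would make $s$ a $p$-th power. So this family is not Zariski locally trivial over any nonempty open set.

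\emph{Propagating coordinates.} A coordinate choice made at $x$ cannot be propagated by semicontinuity. The fibers are not linear subspaces of the truncation; they are preimages of linear subspaces under additive $p$-polynomial maps. On the family $\{y=x^p-ux\}\subset\mathbb G_a^2$, the projection to $y$ is bijective at $u=0$ but has kernel of order $p$ for every $u\neq 0$.

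What is missing is a genuinely relative argument. Examples would be a uniform explicit parametrization of the solutions (as with the coordinates $c_{1,m},\dots,c_{n-1,m}$ in Section~3), or a proof that this particular family of unipotent groups is split Zariski locally around every point. The paper's proof does not supply this either. It simply asserts that after shrinking $U$ one may choose $d$ coordinate functions restricting to a basis on every fiber.

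\textbf{A second, smaller problem: $f_{A,B}(\mathcal O_L^{n\times m})$ need not be an $\mathcal O_L$-module.} This affects your pointwise step (``the linear subspace $\Lambda_{A,B}/f_{A,B}(t^e\mathcal O_L^{n\times m})$'') and your positive-slope shortcut (``a vector bundle''), since both require that property. It holds for the standard simple isocrystal and standard lattice of the affine-quotient theorem. It fails for the Hom-isocrystal with the lattice $\mathcal O_L^{n\times m}$.

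Take $n=3$, $m=2$, $b=\operatorname{diag}(\tau_3^{2},\tau_2)$. Then $M_1=\tau_3^2$, $M_2=\tau_2$, and the slope $2/3-1/2$ is positive. With matrix units $E_{ij}$, one has $f(-E_{31})=-E_{11}+E_{32}$. However, solving $f(X)=c\,(-E_{11}+E_{32})$ forces $t\bigl(t\sigma^6(x_{12})-x_{12}\bigr)=\sigma(c)-c$. Hence $X\notin\mathcal O_L^{3\times 2}$ whenever $\sigma(c)-c\in\mathcal O_L^\times$.

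The pointwise conclusion survives without linearity:
\begin{itemize}
\item $f_{A,B}^{-1}(\Lambda_{A,B})/t^e\mathcal O_L^{n\times m}$ is the preimage of a linear subspace under a truncation of $f_{A,B}$, hence a closed subgroup of a vector group.
\item It is connected, being the image of a truncation of $\Lambda_{A,B}$ under $f_{A,B}^{-1}$, which is a morphism after perfection.
\item Its reduction is a smooth connected unipotent group over $\bar\kappa$, hence an affine space.
\end{itemize}
The vector-bundle claim in the positive-slope case, however, is not available as stated.
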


\begin{proof}
We work locally on the base
\[
X^M_{\mu_M}(b).
\]
Fix a geometric point represented by \((A,B)\). Since \(A\) and \(B\) vary
algebraically in a sufficiently small Zariski chart of the base, the matrices
\[
M_1=A^{-1}b_1\sigma(A),\qquad M_2=B^{-1}b_2\sigma(B)
\]
also vary algebraically in that chart. Hence the associated operator
\[
f_{A,B}(X)=M_1\sigma(X)-XM_2
\]
varies algebraically as an additive Frobenius-polynomial map.

Similarly, the target lattice \(\Lambda_{A,B}\) is determined by finitely many
Cartan-valuation inequalities. After shrinking the base if necessary, these
valuation inequalities are constant. Thus, on a sufficiently small Zariski open
neighborhood \(U\) of the chosen point, the family of target lattices
\(\Lambda_{A,B}\) is identified with a fixed lattice \(\Lambda_U\) inside
\(M_{n\times m}(L)\).

Therefore the fiber over a point \((A,B)\in U\) is represented by
\[
f_{A,B}^{-1}(\Lambda_U)/\mathcal O_L^{n\times m}.
\]

By the nonzero-slope assumption on the Hom-isocrystal, Theorem~7.2 applies
fiberwise to the operator \(f_{A,B}\). Hence each geometric fiber is an affine
space. Moreover, since the dimension is locally constant by assumption, all
fibers over \(U\) have the same dimension \(d\).

It remains to see that these affine fibers vary locally trivially. Choose an
integer \(N\gg0\) such that all lattices involved satisfy
\[
t^N\mathcal O_L^{n\times m}\subset
\mathcal O_L^{n\times m}
\subset
f_{A,B}^{-1}(\Lambda_U)
\subset
t^{-N}\mathcal O_L^{n\times m}
\]
for all \((A,B)\in U\). Then the family
\[
f_{A,B}^{-1}(\Lambda_U)/t^N\mathcal O_L^{n\times m}
\]
is cut out inside the fixed finite-dimensional affine space
\[
t^{-N}\mathcal O_L^{n\times m}/t^N\mathcal O_L^{n\times m}
\]
by equations whose coefficients depend algebraically on \((A,B)\in U\).

After shrinking \(U\) once more, one may choose \(d\) coordinate functions whose
restrictions form a basis of the quotient on every fiber. This gives an
isomorphism
\[
f_{A,B}^{-1}(\Lambda_U)/\mathcal O_L^{n\times m}
\cong \mathbb A^d
\]
varying algebraically with \((A,B)\in U\). Hence
\[
\beta^{-1}(U)\cong U\times \mathbb A^d.
\]
Thus \(\beta\) is a Zariski locally trivial affine-space bundle.

\end{proof}

\begin{comment}
    \begin{thm}[General Levi without the Drinfeld hypothesis]\label{thm:nonDrinfeld-N-block}
Let $G=\GL_{k_1+\cdots+k_N}$, $M=\GL_{k_1}\times\cdots\times\GL_{k_N}$, and $b=\diag(b_1,\dots,b_N)\in M(L)$ basic, and that \(M\) is maximal with this property. Let $\mu_M$ be the $M$–dominant \emph{minuscule} cocharacter corresponding to $b$, and set $\mu=(\mu_M)_{\mathrm{dom}}$. Then the reduction to Levi map
\[
\beta:\ X^G_\mu(b)\ \longrightarrow\ X^M_{\mu_M}(b)
\]
is Zariski–locally a trivial vector bundle over $X^M_{\mu_M}(b)$; in particular, every fiber is (canonically) an affine space over $\bar\kappa$.
\end{thm}

\begin{proof}[Proof (sketch)]
Order the blocks and factor $\beta$ as a composition of $N-1$ two–block reductions
\[
X^G_\mu(b)\ \xrightarrow{\ \beta_1\ }\ X^{M^{(1)}}_{\mu_{M^{(1)}}}(b)\ \xrightarrow{\ \beta_2\ }\ \cdots\ 
\xrightarrow{\ \beta_{N-1}\ }\ X^M_{\mu_M}(b),
\]
where at each step we merge the “already reduced’’ blocks into one and compare it with the next block. At step $s$, the fiber is governed by a single off–diagonal $(1,2)$–block and hence by the $F$–space $(H_{ij},\Phi_{ij})$ for some pair $(i,j)$; by hypothesis this has no slope $0$. It shows that every $\beta_s$ is a Zariski–trivial vector bundle; composing trivial bundles yields a trivial vector bundle for $\beta$. The fiber is thus an iterated vector group, i.e.\ an affine space. 
\end{proof}
\end{comment}

\begin{thm}[General Levi without the Drinfeld hypothesis]\label{thm:nonDrinfeld-N-block}
Let
\[
G=\GL_{k_1+\cdots+k_N},\qquad
M=\GL_{k_1}\times\cdots\times\GL_{k_N},
\]
and let
\[
b=\diag(b_1,\dots,b_N)\in M(L)
\]
be basic in \(M\). Write
\[
\nu_i\in\mathbb Q
\]
for the Newton slope of \(b_i\in \GL_{k_i}(L)\). Assume that \(M\) is maximal
among standard Levi subgroups in which \(b\) is basic. Equivalently, after
possibly merging equal-slope blocks, we may assume
\[
\nu_i\neq \nu_j\qquad (i\neq j).
\]
Let \(\mu_M\) be the \(M\)-dominant minuscule cocharacter determined by \(b\),
and set
\[
\mu=(\mu_M)_{\mathrm{dom}}.
\]
Then the reduction-to-Levi map
\[
\beta:X^G_\mu(b)\longrightarrow X^M_{\mu_M}(b)
\]
is a Zariski locally trivial affine-space bundle. In particular, every
geometric fiber of \(\beta\) is isomorphic, noncanonically in general, to an
affine space over \(\bar\kappa\).
\end{thm}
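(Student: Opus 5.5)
I would prove this by induction on the number \(N\) of blocks, peeling off one off-diagonal block column at a time and reducing each step to the two-block result (Theorem 7.4 on local affine-bundle structure) together with the nonzero-slope affine quotient theorem (Theorem 7.2, Corollary~\ref{cor:quotient-affine}).

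\emph{Factorization.} Let \(P'=M'N'\) be the standard parabolic with Levi \(M'=\GL_{k_1+\cdots+k_{N-1}}\times\GL_{k_N}\). Write \(b'=\diag(b_1,\dots,b_{N-1})\), \(\mu'=(\mu_1\oplus\cdots\oplus\mu_{N-1})_{\mathrm{dom}}\), and \(\mu_{M'}=\mu'\oplus\mu_N\). Then \(\beta\) factors as
\[
X^G_\mu(b)\xrightarrow{\ \beta'\ }X^{\GL_{k_1+\cdots+k_{N-1}}}_{\mu'}(b')\times X^{\GL_{k_N}}_{\mu_N}(b_N)\xrightarrow{\ \beta''\times\mathrm{id}\ }X^M_{\mu_M}(b).
\]
Here \(\beta''\) is the reduction-to-Levi map for \(N-1\) blocks, which is a Zariski locally trivial affine-space bundle by induction. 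A composition of Zariski locally trivial affine-space bundles is again one. For this, we trivialize the second map over \(U\cong V\times\mathbb A^{d''}\), then trivialize the first over small opens of \(U\); iterated affine-space bundles over an affine-space-times-\(V\) chart are locally trivial by Theorem 7.4 applied on smaller opens. So it suffices to show that \(\beta'\) is a Zariski locally trivial affine-space bundle.

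\emph{The last block column.} Using the recursion (5.2), the fiber of \(\beta'\) over a point represented by \(\bigl(g_1,A_N\bigr)\), with \(g_1\) block upper-triangular, is parametrized by the single column block \(E=(E_{1N},\dots,E_{N-1,N})\) modulo the \(\calO_L\)-lattice coming from the coset congruence (5.1). After substituting \(C=g_1^{-1}\)-twisted coordinates, the admissibility condition is \(M'\sigma(C)-CM_N\in\Lambda_{g_1,A_N}\), where \(M'=g_1^{-1}b'\sigma(g_1)\). This is exactly the two-block equation of Section 7 with \(M_1=M'\) and \(M_2=M_N\). The key input is that the Hom-isocrystal \(\Hom\bigl((L^{k_N},M_N\sigma),(L^{k'},M'\sigma)\bigr)\) has slopes \(\nu_i-\nu_N\) for \(i<N\). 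These are all nonzero by the assumption \(\nu_i\neq\nu_j\). Hence \(f=\Phi-\mathrm{id}\) is bijective on \(V\).

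Theorem 7.2 is stated for a simple isocrystal. I would extend it to a finite direct sum of nonzero-slope simple isocrystals, possibly with slopes of both signs, as follows. Choose a basis adapted to the slope decomposition; \(f\) is then block diagonal. Only lattices of product form would be handled directly, so for a general target lattice \(\Lambda\) I would instead apply the Lemma on finite radicial isogenies to the induced additive Frobenius-polynomial bijection \(\overline f\) on \(t^{-e'}\Lambda_0/t^e\Lambda_0\), exactly as in the proof of Theorem 7.2. That argument uses only bijectivity of \(f\) and \(f(t^a\Lambda_0)=t^af(\Lambda_0)\), both of which hold for the direct sum. Corollary~\ref{cor:quotient-affine} then gives that each fiber is an affine space.

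\emph{Local triviality and the main obstacle.} The remaining hypothesis of Theorem 7.4 is local constancy of the fiber dimension. I would obtain it from \cite[Prop.~5.6.1]{GHKR06}, which gives equidimensionality of fibers of the reduction morphism. Alternatively, the GHKR identity \(\dim f^{-1}(\Lambda')-\dim\Lambda'=d(V,\Phi)\) gives a relative dimension that depends only on the slopes and on \(\Lambda\), and \(\Lambda\) is locally constant by the Cartan-valuation argument. The main obstacle I expect is that the base of \(\beta'\) is not zero-dimensional: \(g_1\) varies in a positive-dimensional family, so the twisting of the lattice \(\Lambda_{g_1,A_N}\) and of the coset lattice by the mixed terms \(E_{i\ell}H_{\ell N}\) must be shown to vary algebraically and to be locally constant as a lattice. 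Here I would first shrink to Zariski opens on which all Smith-normal-form valuations of the relevant minors are constant, as in the proof of Theorem 7.4. On such opens the linear change of variables \(C\mapsto\) (lattice normalization) is algebraic in the base point, and one can pick \(d\) coordinates that restrict to a basis on every fiber.
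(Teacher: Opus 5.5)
Your proposal is essentially the paper's proof. The paper also factors $\beta$ into successive two-block reductions (its chain $\beta_{N-1},\dots,\beta_1$ is your induction), uses that $\Hom(V_N,V_{\le N-1})=\bigoplus_{i<N}\Hom(V_N,V_i)$ has only the nonzero slopes $\nu_i-\nu_N$, applies the nonzero-slope affine-quotient theorem fiberwise, and gets local triviality by shrinking to opens with constant Cartan valuation pattern and equidimensional fibers.

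Your one deviation is handling the direct sum by running the radicial-isogeny argument on the whole $\overline f$ for the standard lattice of an adapted basis, then passing to the coset lattice via Corollary~\ref{cor:quotient-affine}. That is cleaner than the paper's summand-by-summand ``commensurable with a direct sum'' step. It does also use that $f(\Lambda_0)$ is an $\mathcal O_L$-lattice, which holds for the adapted lattice but is missing from your list of what the argument needs. Finally, exactly as in the paper, the claim that a composite of Zariski-locally trivial affine-space bundles is again one is asserted rather than proved: trivializing $\beta'$ on small opens of $W\times\mathbb A^{d''}$ does not trivialize it over the full preimage of an open $W$.
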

\begin{proof}
We prove the statement by factoring \(\beta\) into a sequence of two-block
reduction maps.

Let
\[
M^{(s)}
=
\GL_{k_1+\cdots+k_s}\times \GL_{k_{s+1}}\times\cdots\times \GL_{k_N}
\]
for \(1\leq s\leq N\). Thus \(M^{(1)}=M\) and \(M^{(N)}=G\), up to the
direction in which we write the reduction maps. Equivalently, starting from
\(G\), we reduce one block at a time:
\[
X^G_\mu(b)
\xrightarrow{\beta_{N-1}}
X^{M^{(N-1)}}_{\mu_{M^{(N-1)}}}(b)
\xrightarrow{\beta_{N-2}}
\cdots
\xrightarrow{\beta_1}
X^M_{\mu_M}(b).
\]
It is enough to prove that each
\[
\beta_s:
X^{M^{(s+1)}}_{\mu_{M^{(s+1)}}}(b)
\longrightarrow
X^{M^{(s)}}_{\mu_{M^{(s)}}}(b)
\]
is a Zariski locally trivial affine-space bundle. The composition of Zariski
locally trivial affine-space bundles is again a Zariski locally trivial
affine-space bundle, after shrinking the base successively.

Fix a step \(s\). At this step, the relevant Levi has two blocks:
\[
\GL_{K_s}\times \GL_{k_{s+1}},
\qquad
K_s:=k_1+\cdots+k_s.
\]
The first block is generally not basic; it is the direct sum of the basic
isocrystals attached to
\[
b_1,\dots,b_s.
\]
Let
\[
V_{\leq s}:=V_1\oplus\cdots\oplus V_s,
\qquad
V_{s+1}:=L^{k_{s+1}},
\]
with Frobenius operators
\[
\Phi_{\leq s}:=b_{\leq s}\sigma,\qquad
\Phi_{s+1}:=b_{s+1}\sigma.
\]
For a base point represented by
\[
(A_{\leq s},A_{s+1}),
\]
write
\[
M_{\leq s}=A_{\leq s}^{-1}b_{\leq s}\sigma(A_{\leq s}),
\qquad
M_{s+1}=A_{s+1}^{-1}b_{s+1}\sigma(A_{s+1}).
\]
The fiber of \(\beta_s\) is described by the off-diagonal block \(X\), modulo
the standard lattice, satisfying a Frobenius-twisted lattice equation of the
form
\[
M_{\leq s}\sigma(X)-XM_{s+1}\in \Lambda_{s},
\]
where \(\Lambda_s\) is the target lattice determined by the Cartan condition
for \(\mu_{M^{(s+1)}}\).

The associated \(F\)-isocrystal controlling this equation is
\[
\Hom\bigl((V_{s+1},\Phi_{s+1}),(V_{\leq s},\Phi_{\leq s})\bigr).
\]
Since
\[
V_{\leq s}=V_1\oplus\cdots\oplus V_s,
\]
we have a slope decomposition
\[
\Hom(V_{s+1},V_{\leq s})
=
\bigoplus_{i=1}^s \Hom(V_{s+1},V_i).
\]
The summand \(\Hom(V_{s+1},V_i)\) has slope
\[
\nu_i-\nu_{s+1}.
\]
By the maximality assumption on \(M\), the slopes \(\nu_i\) are pairwise
distinct. Hence
\[
\nu_i-\nu_{s+1}\neq 0
\qquad
\text{for all }1\leq i\leq s.
\]
Thus the Hom-isocrystal controlling the two-block equation at step \(s\) has no
slope-zero part.

Although the first block \(V_{\leq s}\) may have mixed slopes, this causes no
difficulty. The operator
\[
X\longmapsto M_{\leq s}\sigma(X)M_{s+1}^{-1}
\]
preserves the above direct-sum slope decomposition after passing to the
Dieudonne--Manin decomposition. The target lattice \(\Lambda_s\) is an
admissible lattice with respect to this decomposition: after choosing local
coordinates on the base, the Cartan inequalities defining \(\Lambda_s\) are
given by finitely many valuation conditions on the entries of the off-diagonal
block. Equivalently, \(\Lambda_s\) is commensurable with a direct sum of
lattices on the slope summands
\[
\Hom(V_{s+1},V_i).
\]
Therefore the affine-quotient theorem for nonzero-slope simple isocrystals
applies to each slope summand, and hence to their direct sum. Consequently, for
every geometric point of the base, the fiber of \(\beta_s\) is an affine space.

It remains to upgrade this fiberwise statement to local triviality. This is a
local question on the base
\[
X^{M^{(s)}}_{\mu_{M^{(s)}}}(b).
\]
Choose a Zariski open chart \(U\) on this base on which the representatives
\(A_{\leq s}\) and \(A_{s+1}\) vary algebraically. Then the matrices
\[
M_{\leq s}=A_{\leq s}^{-1}b_{\leq s}\sigma(A_{\leq s}),
\qquad
M_{s+1}=A_{s+1}^{-1}b_{s+1}\sigma(A_{s+1})
\]
vary algebraically on \(U\). After shrinking \(U\), the valuation pattern
defining the target lattice \(\Lambda_s\) is constant, so the family of fiber
equations is cut out inside a fixed finite-dimensional affine space by additive
Frobenius-polynomial equations whose coefficients vary algebraically on \(U\).

By the nonzero-slope affine-quotient theorem, each fiber is an affine space of
the same dimension, equal to the dimension predicted by the usual dimension
formula. After shrinking \(U\) again, one may choose affine coordinates on these
solution spaces algebraically in the base. Therefore
\[
\beta_s^{-1}(U)\cong U\times \mathbb A^{d_s}
\]
over \(U\), for some integer \(d_s\). Hence each \(\beta_s\) is a Zariski locally trivial affine-space bundle.

We conclude that
\[
\beta:X^G_\mu(b)\to X^M_{\mu_M}(b)
\]
is a Zariski locally trivial affine-space bundle. Its fibers are therefore
isomorphic to affine spaces over \(\bar\kappa\).
\end{proof}

\begin{rmk}
In the statement above we only claim that \(\beta\) is a Zariski locally trivial affine-space bundle, not necessarily a vector bundle. Indeed, an affine-space bundle becomes a vector bundle only after proving that the transition functions may be chosen to be linear rather than merely affine. Such a choice is available in certain canonical or Drinfeld situations, where the lattice equations often come with global affine coordinates. However, in the general non-Drinfeld case, this additional structure is not automatic.    
\end{rmk}

\end{document}